\newcommand{\AffSch}{\textup{AffSch}}
\newcommand{\ft}{\textup{ft}}
\newcommand{\DM}{\textup{DM}}
\newcommand{\PreStk}{\textup{PreStk}}
\newcommand{\Stk}{\textup{Stk}}
\newcommand{\DGCat}{\textup{DGCat}_{\textup{cont}}}
\newcommand{\PrStL}{\textup{Pr}^{\textup{L}}_{\textup{st}}}
\newcommand{\Ind}{\textup{Ind}}
\DeclareMathOperator{\cofib}{\textup{cofib}}
\DeclareMathOperator{\Th}{\textup{Th}}
\newcommand{\ins}{\textup{ins}}
\newcommand{\BT}{\textup{BT}}
\newcommand{\Disp}{\textup{Disp}}
\newcommand{\quot}[1]{\left[#1\right]}
\newcommand{\PrLO}{\textup{Pr}^{L,\otimes}}
\newcommand{\Corr}[2]{\textup{Corr}({#1})_{#2}}
\newcommand{\lft}{\textup{lft}}
\newcommand{\CAlg}{\textup{CAlg}}
\date{}
\title{Motives on pro-algebraic stacks}
\author{Can Yaylali}%\href{mailto:math.yaylali@gmail.com}{math.yaylali@gmail.com}
\begin{document}
\maketitle
\begin{abstract}
We define an $\infty$-category of rational motives for inverse limits of algebraic stacks, so-called pro-algebraic stacks. We show that it admits a $6$-functor formalism for certain classes of morphisms.
On pro-schemes, we show that this 6-functor formalism is in the sense of Liu-Zheng. 
This theory yields an approach to the theory of motives for non-representable algebraic stacks and non-finite type morphisms.
\end{abstract}

\thispagestyle{empty}
\tableofcontents

\section{Introduction}
\vspace{6pt}\noindent\textbf{Motivation:}  Barsotti--Tate groups have been central to many studies in arithmetic geometry. Consider the stack $\BT$ of Barsotti--Tate groups over $\Spec(\FF_{p})$. Geometrically, $\BT$ is difficult to understand, as it is not an algebraic stack. However, we can express $\BT$ as a pro-system of algebraic stacks, $\BT \simeq \lim_{n \geq 0} \BT_{n}$, where $\BT_{n}$ denotes the algebraic stack of $n$-truncated Barsotti--Tate groups. Given the importance of $\BT$, it is natural to ask about its “motive” in $\DM(\FF_{p}, \QQ)$. Since $\BT$ is not algebraic, this question does not a priori make sense, though one can formally define
$$
	M(\BT) \coloneqq \colim_{n \geq 0} M(\BT_{n}).
$$
Alternatively, we could formally extend $\DM^{!}_{\QQ}$ to arbitrary prestacks and define $M(\BT)$ as the $!$-pull/push of the unit. However, there is no general reason why these two definitions should agree. This issue is often referred to as continuity. If $S = \lim_{i} S_{i}$ is a projective system of noetherian finite-dimensional schemes with affine transition maps, then
$$
	M(S) \cong \colim_{i} M(S_{i})
$$
provided that $S$ is also noetherian of finite dimension \cite[Thm.~14.3.1]{CD}.

Returning to $\BT$, both of the above definitions seem reasonable. However, the formal extension $\DM^{!}_{\QQ}$ does not come with the six-functor formalism one would typically expect in a theory of motives. For computations, such a formalism would be very useful. This becomes clearer when considering the stack $\Disp$ of so-called displays \cite{Lau}. This stack also admits a presentation $\Disp \simeq \lim_{n \geq 0} \Disp_{n}$ as a pro-system of algebraic stacks over $\Spec(\FF_{p})$, and we have a map of pro-systems of algebraic stacks $\phi\colon \BT \to \Disp$ \cite{Lau}. This morphism is level-wise smooth and a universal homeomorphism. The motive of each $\Disp_{n}$ is in fact Tate (cf. Theorem~\ref{thm-Disp}), and hence easier to understand than the motive of each $\BT_{n}$. To transfer information from $\Disp$ to $\BT$, a definition of the motive of a pro-system of algebraic stacks should be compatible with morphisms, at least those of pro-(locally of finite type).

In this article, we will investigate such situations more closely. To be more precise, we will study the category of rational motives on pro-systems of algebraic stacks $X \simeq \lim_{i} X_{i}$ defined via $\colim_{i} \DM(X_{i})$, and investigate its functorial properties.

%Let us be a bit more precise. Let $\Dcal$ be a compactly generated $6$-functor formalism on algebraic stacks. When $x\rightarrow y$ is a morphism of pro-systems of algebraic stacks locally of finite type over $K$, does the induced functor 
%$$
%	\colim_{\Ical}\Dcal \circ y\rightarrow  \colim_{\Ical} \Dcal \circ x
%$$
%extend to a full $6$-functor formalism?

\vspace{6pt}\noindent\textbf{Motives on algebraic stacks:} We can define $\DM(F)$, the DG-category of motives for any prestack $F\colon \Ring\rightarrow \SS$ (here $\SS$ denotes the $\infty$-category of Kan-complexes) via right Kan extension along the Yoneda embedding \cite{RS1}. In this way, we can define $\DM(X)$ for any algebraic stack locally of finite type over a field $K$ for rational motives. Since rational motives satisfy \'etale descent, the $\infty$-category of rational motives on an algebraic stack $X$ with smooth atlas $\Xtilde$ can be computed via the formula
$$
\begin{tikzcd}
	 \DM(X)\coloneqq \lim ( \DM(\Xtilde)\arrow[r,"",shift left = 0.3em]\arrow[r,"",shift right = 0.3em]&\DM(\Xtilde\times_{X}\Xtilde)\arrow[r,"",shift left = 0.6em]\arrow[r,""]\arrow[r,"",shift right = 0.6em]&\dots )
\end{tikzcd}
$$
\cite{RS1}. The problem with these definitions is that the computation of the motive of a limit $\lim_{i} X_{i}$ as above is not straightforward. To circumvent this problem, we will work with an alternative definition and show that in classical situations our definition agrees with the usual one.

\vspace{6pt}\noindent\textbf{Motives on pro-algebraic stacks:} 
Let us fix a filtered $\infty$-category $\Ical$.

\begin{intro-definition}
\label{def.intro}
Let $K$ be a field and $x\colon \Ical\rightarrow \PreStk_{K}^{\op}$ be an inverse system of prestacks. Assume that each $X_{i}\coloneqq x(i)$ is representable by an algebraic stack locally of finite type over $K$ and let us denote by $X\coloneqq \lim x$. Then we call the tuple $(x,X)$ a \textit{pro-$\Ical$-algebraic stack}. 

We call $(x,X)$ \textit{tame}, if $x_{ij}$ is smooth for all $i\to j\in\Ical$.
\end{intro-definition}

Note that there is an obvious notion of morphisms of pro-$\Ical$-algebraic stacks.

\begin{intro-definition}
Let $(x,X)$ be a pro-$\Ical$-algebraic stack. Then we define the $\infty$-category of rational motives on $(x,X)$ to be 
$$
	\DM(x,X)\coloneqq \colim_{i\in \Ical,x^{*}}\DM(X_{i}), 
$$
where the colimit is computed in $\DGCat\coloneqq \mathrm{Mod}_{\Dcal(\QQ)}(\Pr^{L}_{\mathrm{st}})$.
\end{intro-definition}

With this construction, we will obtain a full $6$-functor formalism on pro-$\Ical$-algebraic stacks, which is not provided by the right Kan extended version. The caveat is that this $6$-functor formalism does not exist for all morphisms of pro-$\Ical$-algebraic stacks. We have to make restrictions depending if $\Ical\simeq \NN$, the poset $(\NN,\leq)$ viewed as an $\infty$-category, or not. We will explain this in the main text of this article.

To keep this introduction light, we will only state a summarized version of the existence of a (motivic) $6$-functor formalism and refer for a precise version to the main text.

\begin{intro-theorem}[\protect{\ref{thm.6-ff}}]
\label{thm.intro.2}
	The $\infty$-category $\DM(x,X)$ is closed symmetric monoidal, compactly generated and the assignment $(x,X)\mapsto \DM(x,X)$ is contravariantly functorial in $(x,X)$. 
		
	Let $f\colon (x,X)\rightarrow (y,Y)$ be a morphisms of tame pro-$\NN$-algebraic stacks that is levelwise locally of finite type.   Then $f^{*}\coloneqq \DM(f)$ is part of a $6$-functor formalism if we assume that the exchange map $f_{n+1!}x_{n}^{!}\rightarrow y_{n}^{!}f_{n!}$ induced by the diagram is 
	$$
		\begin{tikzcd}
			X_{n+1}\arrow[r,"x_{n}"]\arrow[d,"f_{n+1}", swap]& X_{n}\arrow[d,"f_{n}"]\\
			Y_{n+1}\arrow[r,"y_{n}"]&Y_{n}
		\end{tikzcd}
	$$
 an equivalence for all $n\in\NN$. Moreover, if $f_{n}^{*}y_{n*}\to x_{n*}f_{n+1}^{*}$  is an equivalence, then $f^{*}$ defines a pullback formalism with respect to smooth maps.
 
Further, this $6$-functor formalism satisfies homotopy invariance, T-stability, localization and purity in a suitable sense.\par 
 Finally, if we assume that each square above is cartesian, the same results hold for any locally of finite type morphism of pro-$\Ical$-algebraic stacks for any filtered $\infty$-category without any assumption on tameness and the exchange map.
\end{intro-theorem} 

The assumption on the base change map is satisfied, for example, if $(x,X)$ and $(y,Y)$ are tame and square above is cartesian. Surprisingly, the condition on $f_{n+1!}x_{n}^{!}\rightarrow y_{n}^{!}f_{n!}$ only appears if the square above is not cartesian, since otherwise, we can use the $!/*$-base change equivalence. More importantly, we will see that it also holds for the structure map $(\tau, \Disp) \to \Spec(\FF_{p})$, (where $\tau_{n} \colon \Disp_{n+1} \to \Disp_{n}$ denotes the truncation map on displays), for which each square is not cartesian. In particular, we can define the motive of $\Disp$ via $!$-pull/push of the unit. Moreover, we will see that in the special case of $\Disp$, we have continuity
$$
	\DM(\Disp) \simeq \DM(\tau, \Disp)
$$
(see~Proposition~\ref{prop.Disp.motive}). By Theorem~\ref{thm.intro.2}, we obtain adjoint functors $	 p_{\sharp}\dashv p^{*}\dashv p_{*}$ and $p_{!}\dashv p^{!}$ with the expected compatibilities for the structure map $p\colon \Disp\to\Spec(\FF_{p})$. These do not follow formally from the definition of $\DM(\Disp)$ via Kan extension.

The definition via $ \DM(\tau, \Disp)$ allows us to compute $M(\Disp)$ more explicitly.  

\begin{intro-theorem}[\protect{\ref{thm-Disp}}]
	The pullback along the truncation map $\Disp\rightarrow \Disp_{1}$ induces a fully faithful embedding.
	$$
		\DM(\Disp_{1})\hookrightarrow \DM(\Disp)
	$$
	In particular, we have an equivalence of motives 
	$$
		M(\Disp)\simeq M(\Disp_{1})
	$$
	inside $\DM(\FF_{p})$ and moreover it is contained in the full stable cocomplete subcategory of $\DM(\FF_{p})$ generated by Tate motives.
\end{intro-theorem} 

As a consequence, working with pro-algebraic stacks, we see that $H^{*,*}(BT,\QQ)\cong H^{*,*}(\Disp_{1},\QQ)$ (see Corollary \ref{cor.BT}).

\vspace{6pt}\noindent\textbf{Generalization:} 
Since most arguments in the proof of Theorem \ref{thm.intro.2} apply in a more general context, we will frequently work with compactly generated $6$-functor formalisms. Although our primary interest lies in rational motives, we will formulate the necessary statements as generally as possible. This broader perspective encompasses both compactly generated pullback formalisms \cite{DG} and $6$-functor formalisms in the sense of Liu–Zheng \cite{LiuZheng}. The advantage of this generality is that the key computations remain valid in a wider range of settings, such as pro-systems of ind-schemes. The \'etale derived category of such systems, for instance, is considered by Bouthier, Kazhdan and Varshavsky \cite{Kaz}.

\subsection*{Structure of this article}
We start by defining pro-$\Ical$-algebraic stacks and listing some examples. We define the $\infty$-category of rational motives on pro-$\Ical$-algebraic stacks.\par Afterward, we prove, that on certain classes of morphisms, this definition yields a (motivic) $6$-functor formalism.\par 
Lastly, we will have a closer look at motivic cohomology for pro-$\Ical$-algebraic stacks and apply our results to the stack of displays and arbitrary Galois extensions.

\subsection*{Acknowledgement}
This project started after a talk with Torsten Wedhorn about motives on limits of algebraic stacks and I would like to thank him for various discussions and his interest in this project. I want to thank Chirantan Chowdhury for his comments about correspondences and the idea to use the formalism of bisimplicial sets to extend diagrams to correspondences. I also thank the anonymous referee for all their suggestions. Further, I would like to thank R\i zacan \c{C}ilo\u{g}lu, Martin Gallauer, Benjamin Hennion, Ryomei Iwasa, Hyungseop Kim, Timo Richarz, Jakob Scholbach for helpful
discussions. \par
This project was funded by the Deutsche Forschungsgemeinschaft (DFG, German Research Foundation) - project number 524431573, the Deutsche Forschungsgemeinschaft (DFG, German Research Foundation) TRR 326 \textit{Geometry and Arithmetic of Uniformized Structures}, project number 444845124.

\subsection{Assumptions and notations}
\label{notations}
In the following, we want to fix some categorical and algebraic notation that is used throughout this article.
\subsubsection*{Categorical notation}
Throughout, we fix some inaccessible regular cardinal $\kappa$. By \textit{small}, we will mean \textit{$\kappa$-small}. We will freely work with $(\infty,1)$-categories in the sense of \cite{HTT} and if we write $\infty$-category, we will always mean $(\infty,1)$-category. When we say \textit{category}, we always mean $1$-category and view it as an $\infty$-category via the nerve functor.
\begin{enumerate}
	\item[$\bullet$] Let $K$ be a poset. Then we view $K$ as an $\infty$-category with morphisms induced by the ordering. 
	\item[$\bullet$] If $F\colon C\rightarrow D$ is a functor of $\infty$-categories that admits a right adjoint $G$, then we will denote the adjunction of $F$ and $G$  by the symbol $F \dashv G$. 
	\item[$\bullet$] The $\infty$-category of small $\infty$-groupoids is denoted by $\SS$.
	\item[$\bullet$] The $\infty$-category of small $\infty$-categories is denotes by $\ICat$.
	\item[$\bullet$] The $\infty$-category of presentable $\infty$-categories with continuous functors, i.e. colimit preserving functors, is denoted by $\Pr^{L}$. 
	\item[$\bullet$] We denote by $\PrLO$ the $\infty$-operad, whose underlying $\infty$-category is $\Pr^{L}$.
	\item[$\bullet$] The $\infty$-category of presentable stable $\infty$-categories with continuous functors is denoted by $\PrStL\coloneqq \mathrm{Mod}_{\Sp}(\PrLO)$.
	\item[$\bullet$] Let $\Dcal(\QQ)$ denote the category derived category of $\QQ$-vector spaces. This is an algebra object in $\PrStL$. The $\infty$-category $\DGCat$ denotes the $\infty$-category of $\Dcal(\QQ)$-modules in $\PrStL$.
	\item[$\bullet$] Throughout this article, we will deal with diagrams $X\in \Fun(K,\Ccal)$, where $K$ and $\Ccal$ are $\infty$-categories. For any $k\in K$, we will denote by $X_{k}$ the value of $X$ at $k$ and for any morphism $\alpha\colon k\rightarrow l$ in $K$, we will denote the induced morphism in $\Ccal$ by $X_{\alpha}$. We will often write $X_{kl}$ whenever the morphism $k\rightarrow l$ is clear from the context. This abusive notation is for convenience of the reader. \par 
	If $f\colon X\rightarrow Y$ is a morphisms of diagrams in $\Fun(K,\Ccal)$, then we write $f_{k}\colon X_{k}\rightarrow Y_{k}$ for the morphism in $\Ccal$ associated to $k\in K$.
\end{enumerate}
\subsubsection*{Algebraic notation}
Throughout, let $S$ be an excellent noetherian scheme of finite dimension. 
\begin{enumerate}
	\item[$\bullet$] An algebraic stack is in the sense of \cite{stacks-project}, also referred to as Artin stack.
	\item[$\bullet$]  Let $X$ be an $S$-scheme of finite type. By $\DM(X)$, we denote the DG-category of Beilinson-motives \cite[\S 14]{CD}. Roughly, these are $\AA^{1}$-local \'etale sheaves on $X$ with values in $\Dcal(\QQ)$ such that $\PP^{1}$ is $\otimes$-invertible.
\end{enumerate}

\section{Motives on pro-algebraic stacks}

In this article, we aim to extend the six-functor formalism to limits of algebraic stacks, such as the stack of Barsotti–Tate groups. This stack can be realized as a limit of its truncations. We develop a general theory for inverse systems of algebraic stacks, allowing for non-representable transition maps.\par

Before introducing our main constructions and definitions, we recall the theory of rational motives on algebraic stacks. Following the construction of Richarz–Scholbach, we obtain the DG-category of rational motives on prestacks as defined in \cite[Def.~2.2.1]{RS1}. We will later specialize this to our specific context. In what follows, we omit some details and refer the reader to \textit{loc.~cit.} for an explicit construction.\par

Let $\AffSch^{\ft}_{S}$ denote the category of affine schemes of finite type over $S$. Since $\AffSch^{\ft}_{S}$ is essentially small, we replace it with a small skeleton consisting of the relevant objects for our purposes. We write $\AffSch^{\kappa}_{S}$ for the pro-$\kappa$-completion of $\AffSch^{\ft}_{S}$, which is again small by \cite[Lem.~6.1.2]{KaSch}.\par

A prestack over $S$ is a functor from $\AffSch^{\kappa}_{S}$ to $\infty$-groupoids. The $\infty$-category of prestacks over $S$ is denoted by $\PreStk_{S}$.\par

The functor $\DM \colon X \mapsto \DM(X)$ denotes the functor of Beilinson motives on finite type $S$-schemes. We extend $\DM$ to prestacks as follows. First, we perform a left Kan extension of $\DM$ from $\AffSch^{\ft}_{S}$ to $\AffSch^{\kappa}_{S}$, followed by a right Kan extension along the Yoneda embedding $\AffSch^{\kappa}_{S} \hookrightarrow \PreStk_{S}$. The transition maps in this process are given by $*$-pullbacks. This yields a functor
$$
	\DM^{*} \colon \PreStk_{S}^{\op} \to \DGCat.
$$
If we instead use $!$-pullbacks in place of $*$-pullbacks, we denote the resulting functor by $\DM^{!}$. However, when restricted to schemes locally of finite type over $S$, these two constructions are equivalent. This equivalence follows from the fact that, in our setting, $\DM$ satisfies h-descent on schemes \cite{CD}. Consequently, both $\DM^{!}$ and $\DM^{*}$ can be computed via descent. Indeed, schemes locally of finite type admit open covers by affine schemes of finite type, and $!$- and $*$-pullbacks agree up to shift and twist. The same reasoning shows that $\DM^{*}(X) \simeq \DM^{!}(X)$ for any algebraic space, since the diagonal of an algebraic space is representable by a scheme.\par

Using étale descent, we may glue the six-functor formalism of $\DM$ to the restriction of $\DM^{*}$ to algebraic spaces, as in \cite{KhanVFC}, or equivalently via the DESCENT algorithm of Liu–Zheng \cite{LiuZheng}. Similar constructions appear in \cite{RS1}, where ind-algebraic stacks and ind-schemes are treated.\par

We may now apply the same arguments to glue the six-functor formalism to the restriction of $\DM^{*}$ to algebraic stacks locally of finite type over $S$. As before, the existence of a smooth cover implies that $\DM^{!}(X) \simeq \DM^{*}(X)$ for any such stack $X$. 

To summarize, there is (up to equivalence) no distinction between working with $\DM^{!}$ and $\DM^{*}$. Furthermore, we obtain a complete six-functor formalism in the sense of Liu–Zheng for algebraic stacks locally of finite type over $S$, i.e., the functor $\DM^{*}$ extends to a symmetric monoidal functor
$$
	\DM^{*} \colon \Corr{\mathrm{Stk}^{\lft}_{S}}{\lft} \to \mathrm{CAlg}(\mathrm{Pr}^{L}_{\omega}).
$$
\par

To simplify notation, we will henceforth write $\DM$ for the extension $\DM^{*}$. We retain the notation $\DM^{!}$ for the $!$-variant.

 \begin{notation}
 	Throughout this article, we will denote by $\Ical$ a filtered $\infty$-category.
\end{notation}

\begin{defi}
\label{defi.pro-stack}
Recall the definition of a pro-$\Ical$-algebraic stack of Definition \ref{def.intro}. We simply say \textit{pro-algebraic} if we do not want to specify $\Ical$.
\begin{enumerate}
	\item[$\bullet$]	If $(x,X)$ is a pro-$\Ical$-algebraic stack, then we denote by $X_{i}\coloneqq x(i)$ and the transition maps $X_{j}\rightarrow X_{i}$ by $x_{ij}$, for $i\rightarrow j\in \Ical$.
%	\item[$\bullet$] We denote by $\ProAlg^{\Ical}$ the full subcategory of $\Fun(\Ical^{\op},\PreStk_{S})$ spanned by pro-$\Ical$-algebraic stacks.
	\item[$\bullet$] Let $f\colon (x,X)\rightarrow (y,Y)$ be a morphism of pro-$\Ical$-algebraic stacks. If $\Pbf$ is a property of a morphism of algebraic stacks, we will say that $f$ has property $\Pbf$ if each $f_{i}\colon X_{i}\rightarrow Y_{i}$ has property $\Pbf$. 
	\item[$\bullet$] If $X$ is an algebraic stack, we will abuse notation to denote the constant diagram $i\mapsto X$ (here the transition maps are given by $\id_{X}$) with $X$ itself.
\end{enumerate}
 \end{defi}
 
 We will define some important properties of pro-algebraic stacks inspired from examples of the introduction.
  
 \begin{defi}
 	Let $(x,X)$ be a pro-$\Ical$-algebraic stack. Then we say that $(x,X)$ is
	\begin{enumerate}
		 \item \textit{classical} if $x_{ij}$ is affine for all $i\rightarrow j\in\Ical$,
		\item \textit{strict} if $x_{ij}^{*}\colon \DM(X_{i})\rightarrow \DM(X_{j})$ is fully faithful for all $i\rightarrow j\in\Ical$.
		\item \textit{tame} if each $x_{ij}$ is smooth\footnote{In practice the pro-algebraic stacks we consider are always tame. However, not every assertion in our construction will need this condition.}.
	\end{enumerate}
 \end{defi}
 
 \begin{rem}
 	 The property strict will not be used heavily in this article. Nevertheless, we want to remark that the stack of displays by Lau admits a presentation as a strict pro-$\NN$-algebraic stack. 	  
	 Perhaps a more straightforward example is the infinite dimensional affine space, via the projections $\AA^{n+1}\rightarrow  \AA^{n}$.
 \end{rem}

As remarked in the introduction, we only get a $6$-functor formalism for certain classes of morphisms. Let us define these.
 
 \begin{defi}
 \label{def.adj}
 	If $f\colon(x,X)\rightarrow (y,Y)$ is a morphism of pro-$\Ical$-algebraic stacks, then we call $f$ \textit{cartesian} if each square 
$$
	\begin{tikzcd}
		X_{j}\arrow[r,"x_{ij}"]\arrow[d,"f_{j}",swap]& X_{i}\arrow[d,"f_{i}"]\\
		Y_{j}\arrow[r,"y_{ij}"]&Y_{i}
	\end{tikzcd}
$$
is a pullback diagram. We call a morphism $f\colon(x,X)\rightarrow (y,Y)$ of pro-$\Ical$-algebraic stacks $*$\textit{-adjointable} if the exchange morphism $f_{i}^{*}y_{ij*}\to x_{ij*}f_{j}^{*}$ induced by each square above is an equivalence and $!$\textit{-adjointable} if $f_{j!}x_{ij}^{!}\rightarrow y_{ij}^{!}f_{i!}$ is an equivalence.
 \end{defi}

 The property \textit{cartesian} is perhaps a more natural restriction for the existence of a $6$-functor formalism. Especially, when considering that homotopy invariance, T-stability and localization requires a good notion of the affine line, the projective space resp. taking complement in the category of pro-algebraic stacks. However, it turns out that the property \textit{adjointable} is enough for just the existence of a $6$-functor formalism if $\Ical\simeq \NN$. We will make this more precise late but let us give an example of adjointable morphisms of pro-algebraic stacks.

 \begin{example}
 \label{ex.adjointable.mor}
\begin{enumerate}
	\item[(1)] Let $f\colon (x,X)\rightarrow (y,Y)$ be a  cartesian morphism of pro-$\Ical$-algebraic stacks, then $f$ is adjointable.
	\item[(2)] Let $f\colon (x,X)\rightarrow (y,Y)$ be a morphism of pro-$\Ical$-algebraic stacks. If $y_{ij*}$ is fully faithful for all $i\to j\in \Ical$ (e.g. when $(y,Y)\cong S$) and $(x,X)$ is strict, then a simple computation shows that $f$ is $*$-adjointable.
	
	\hspace{8pt} Indeed, we use that $\id\to x_{ij*}x^{*}_{ij}$ and $y_{ij}^{*}y_{ij*}\to\id$ are invertible to conclude that the morphism  
	$$
		f_{i}^{*}y_{ij*}\rightarrow x_{ij*}x^{*}_{ij}f_{i}^{*}y_{ij*}\simeq x_{ij*}f_{j}^{*}y_{ij}^{*}y_{ij*}\rightarrow x_{ij*}f_{j}^{*}
	$$
	is also invertible.  
	
	\hspace{8pt} If $(x,X)$ and $(y,Y)$ in addition tame and $\id\to y_{ij}^{*}y_{ij\sharp}$ is invertible (again this holds if $(y,Y)\cong S$), then the same argument together with purity shows that $f$ is $!$-adjointable.
\end{enumerate}
\end{example}

\begin{example}
\label{ex.alg.ext}
	Let $L/K$ be an algebraic extension of fields. Let $\Fcal^{L}_{K}$ be the filtered category of all finite field extensions $L/E/K$. Let $\Fcal^{L}_{K}\rightarrow \textup{Fields}$ be the associated diagram in the category of fields. Then $\Spec(L)\cong \lim_{E\in \Fcal^{L}_{K}}\Spec(E)$. Now let $\Xfr$ be an algebraic stack over $K$. For any $E\in \Fcal^{L}_{K}$, we set $\Xfr_{E}\coloneqq \Xfr\times_{K} E$. Let $x_{E'/E}\colon \Xfr_{E}\rightarrow \Xfr_{E'}$ be the projection for a field extension $E'/E$. Then $(x,\Xfr_{L})$ is a classical pro-$\Fcal^{L}_{K}$-algebraic stack. Naturally, we also get a classical pro-$\Fcal^{L}_{K}$-algebraic stack $(\iota,\Spec(L))$ defined by the inclusions of field extensions in $\Fcal^{L}_{K}$. The associated morphism $(x,\Xfr_{L})\rightarrow (\iota,\Spec(L))$ is cartesian. Note that $(\iota,\Spec(L))$ and thus also $(x,\Xfr_{L})$ is not strict as for example any Galois extension $E\hookrightarrow E$ is a $\Gal(E'/E)$-torsor.\par 
	The above can be generalized to the following case. We can replace $\Spec(K)$ by $S$ and assume that $S$ is connected. Let $\sbar$ be a geometric point of $S$. Then we can define $F_{S}$ as the category of pointed \'etale covers over $(S,\sbar)$. The construction above can be repeated in this case for any algebraic stack $\Xfr$ over $S$.
\end{example}

Now let us define the DG-category of rational motives associated to a pro-algebraic stack.
 
 \begin{defi}
 \label{def.DM}
 	Let $(x,X)$ be a pro-$\Ical$-algebraic stack. Then we define the \textit{$\infty$-category of rational motives on $(x,X)$} as $$\DM(x,X)\coloneqq \colim_{i\in\Ical,x^{*}}\DM^{*}(X_{i}),$$ where  the colimit is taken in $\DGCat$.
 \end{defi}
 
 One might wonder why we chose $\DM^{*}$ and not $\DM^{!}$. Our main reason is convenience. We conjecture that we have an equivalence between both possible definitions. But by working with $\DM^{*}$, monoidality of $\DM(x,X)$ follows by definition. Also, we will see that if $\Ical\simeq \NN$, then both possible definitions are equivalent.
 
Let us now formalize Theorem \ref{thm.intro.2} of the introduction. We will postpone the technical proof to Section \ref{sec.proof.6-ff}.
 
\begin{thm}
\label{thm.6-ff}
	Let  $f\colon (x,X)\rightarrow (y,Y)$ be a morphism of pro-$\Ical$-algebraic stacks. Then there is an adjunction
	$$
		 \begin{tikzcd}
			f^{*}\colon \DM(y,Y)\arrow[r,"",shift left = 0.3em]&\arrow[l,"",shift left = 0.3em]\DM(x,X)\colon f_{*},
		\end{tikzcd}
	$$
	where $f^{*}$ is symmetric monoidal and preserves compact objects. The functor $f^{*}$ extends to a functor $(x,X)\mapsto \DM(x,X)$. Moreover, the functor $\DM$ satisfies the following property on pro-$\Ical$-algebraic stacks.
	\begin{enumerate}		
		\item[(PB)] If $f$ is smooth and $*$-adjointable, then $f^{*}$ admits a left adjoint $f_{\sharp}$ satisfying base change and projection formula.
	\end{enumerate}
	
	Assume now that $f$ locally of finite type. Further, assume one of the following holds true
	\begin{enumerate}
		\item[\textup{($\ast$)}] $f$ is cartesian, or
		\item[\textup{($\dagger$)}] $\Ical\simeq \NN$, $f$ is $!$-adjointable, $(x,X)$ and $(y,Y)$ are tame.
	\end{enumerate}
	
	 Then we can upgrade $f^{*}$ to be part of a $6$-functor formalism in the following sense. There exists an adjunction 
	$$
	 \begin{tikzcd}
			f_{!}\colon \DM(x,X)\arrow[r,"",shift left = 0.3em]&\arrow[l,"",shift left = 0.3em]\DM(y,Y)\colon f^{!},
		\end{tikzcd}
	$$
	where $f_{!}$ defines a covariant functor on pro-algebraic stacks satisfying $(*)$ or $(\dagger)$.
	Further, the adjunctions above satisfy the following properties.
	\begin{enumerate}
		\item[(F1)] If $f$ is cartesian and proper, we have a canonical equivalence $f_{*}\simeq f_{!}$.
		\item[(F2)] The functor $f^{!}$ commutes with arbitrary colimits.
		\item[(F3)] (Base change) For any pullback square
	$$
	\begin{tikzcd}
		(w,W)\arrow[r,"f'"]\arrow[d,"g'",swap]& (z,Z)\arrow[d,"g"]\\
		(x,X)\arrow[r,"f"]&(y,Y)
	\end{tikzcd}
	$$
	of pro-algebraic stacks with morphisms satisfying $(*)$ resp. $(\dagger)$, the exchange morphisms
	\begin{align*}
		f^{*}g_{!}\rightarrow g'_{!}f'^{*}\\
		g^{!}f_{*}\rightarrow f'_{*}g'^{!}
	\end{align*}
	are equivalences.
	\item[(F4)]  (Projection formula) The exchange transformation $f_{!}( - \otimes f^{*}(-))\rightarrow f_{!}(-)\otimes -$ is an equivalence.	
\end{enumerate}
Furthermore, the following properties hold.
\begin{enumerate}
		\item[(M1)] (Homotopy invariance) Let us define $(q,\AA^{1}_{X})$ via the system $i\mapsto \AA^{1}_{X_{i}}$. Then $p^{*}$ is fully faithful, where  $p\colon (q,\AA^{1}_{X})\rightarrow (x,X)$ denotes the projection.
		\item[(M2)] (Localization) Let $i\colon (z,Z)\hookrightarrow (x,X)$ be cartesian closed immersion of pro-algebraic stacks with open complement $j\colon (u,U)\rightarrow (x,X)$. Then there exist fiber sequences
		\begin{align*}
			i_{!}i^{!}\rightarrow \id \rightarrow j_{*}j^{*}\\
			j_{!}j^{!}\rightarrow \id \rightarrow i_{*}i^{*}.
		\end{align*}
			\end{enumerate}
	
	Assuming that $\Ical$ admits an initial object $0\in \Ical$, we moreover have the following properties.
	\begin{enumerate}

		\item[(M3)] (T-stability) Let $V_{0}\rightarrow X_{0}$ be a vector bundle and let $p\colon (v,V)\rightarrow(x,X)$ be the induced morphism of pro-algebraic stacks. The zero section $s_{0}\colon X_{0}\rightarrow V_{0}$ induces a section $s\colon (x,X)\rightarrow (v,V)$. Then the Thom-map $\textup{Th}(p,s)\coloneqq \textup{Th}(V_{0})\coloneqq p_{\sharp}s_{*}$ is an equivalence of DG-categories. 
		\item[(M4)] (Orientation) Let $(q,\Gm_{,X})$ be the pro-algebraic stack defined by $n\mapsto \Gm_{,X_{i}}$. We naturally obtain a projection $p\colon (q,\Gm_{,X})\rightarrow (x,X)$. For $M\in\DM(X)$, we set $M(1)\coloneqq \cofib(p_{\sharp}p^{*}M\rightarrow M)[-1]$. Let $V_{0}\rightarrow X_{0}$ be a vector bundle of rank $n$. Then there is a functorial equivalence $\textup{Th}(V_{0})1_{X}\simeq 1_{X}(n)[2n]$.
		\item[(M5)] (Purity) If $f$ is cartesian, we have a canonical equivalence $f^{!}\simeq \Th(\Omega_{f_{0}})1_{X}\otimes f^{*}$.
	\end{enumerate}
	
If we further restrict ourselves to pro-$\Ical$-schemes, i.e. inverse systems of schemes locally of finite type over $S$ indexed over $\Ical$, then $f^{*}$ extends to a $6$-functor formalism in the sense of Liu-Zheng \cite{LiuZheng}.

\end{thm}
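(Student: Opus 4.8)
The plan is to promote the six operations constructed in the theorem to a single symmetric monoidal functor on a correspondence category,
$$
	\DM^{*}\colon \Corr{\mathrm{proSch}^{\lft}_{S}}{\lft}\to \CAlg(\mathrm{Pr}^{L}_{\omega}),
$$
where $\mathrm{proSch}^{\lft}_{S}$ denotes the category of pro-$\Ical$-schemes locally of finite type over $S$, exactly mirroring the functor on $\Corr{\mathrm{Stk}^{\lft}_{S}}{\lft}$ recalled in the preliminary discussion. The strategy is to run the Liu--Zheng DESCENT algorithm, feeding it the data already produced, and to reduce the verification of its hypotheses to the level-wise formalism on schemes locally of finite type over $S$, which is Liu--Zheng by construction.

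Following Liu--Zheng, such a functor is encoded by a map of bisimplicial sets, which is the mechanism used to extend diagrams to correspondences: one records $f^{*}$ on all morphisms together with $f_{!}$ on lft morphisms, subject to the gluing and base-change axioms. Every ingredient is in hand from the theorem, namely the adjunctions $f^{*}\dashv f_{*}$ and $f_{!}\dashv f^{!}$, base change (F3), the projection formula (F4), and the identification $f_{*}\simeq f_{!}$ for cartesian proper maps (F1). Since $\DM(x,X)\simeq\colim_{i}\DM(X_{i})$ and all six operations are built level-wise, each DESCENT coherence for pro-schemes unwinds into the corresponding coherence for the schemes $X_{i}$, which holds because $\DM^{*}$ is already a Liu--Zheng formalism on $\mathrm{Sch}^{\lft}_{S}$; filteredness of $\Ical$ then allows these to assemble over the index.

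The decisive point, and the reason the statement is restricted to schemes, is that the algorithm constructs $f_{!}$ for a general lft morphism via a factorization $f=\bar f\circ j$ into an open immersion $j$ followed by a proper morphism $\bar f$, setting $f_{!}=\bar f_{*}j_{!}$; the existence of such a factorization is Nagata compactification, available for separated lft morphisms of noetherian schemes but not for algebraic stacks. Level-wise Nagata compactifications of the maps $f_{i}$ supply the required factorizations of $f$, which is precisely the input unavailable in the stacky setting. I expect the main obstacle to be coherence: one must check that these level-wise compactifications, together with the attendant base-change and projection squares, are compatible with the transition maps $x_{ij}$ and $y_{ij}$, so that the bisimplicial data form a genuine $\Ical$-indexed diagram and descend to the filtered colimit as coherent homotopies rather than merely as equivalences of homotopy categories. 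Here the adjointability hypotheses together with (F3)--(F4), already established, furnish the necessary $2$-cells, and filteredness guarantees that no higher coherence beyond the finite stages is introduced; packaging these into the requisite independence-of-choices for $f_{!}$ is where the real work lies.
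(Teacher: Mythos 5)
Your proposal only engages with the final sentence of the theorem (the upgrade to a Liu--Zheng formalism on pro-schemes) and treats everything else --- the adjunctions $f^{*}\dashv f_{*}$ and $f_{!}\dashv f^{!}$, (PB), (F1)--(F4), (M1)--(M5) --- as ``ingredients in hand''. But those are precisely the conclusions to be proved, so the argument is circular where it is not simply missing. The bulk of the paper's proof is devoted to constructing these operations: $f^{*}$ and $f_{*}$ come from Lemma \ref{lem.Gai.colim}; (PB) is Proposition \ref{prop.ind.pf}, whose substance is Lemma \ref{lem.Gai.colim.2} (adjointability of the levelwise squares forces the induced $f^{*}$ on the colimit to be limit-preserving, hence to admit a left adjoint) together with the fact that every compact object of $\colim_{i}\DM(X_{i})$ descends to a finite level (Lemma \ref{lem.compact.gen}), which is what lets one check base change and the projection formula levelwise. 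None of this appears in your proposal. You also omit the entire $(\dagger)$ case: for non-cartesian $!$-adjointable morphisms over $\NN$ the paper must first identify $\colim_{x^{*}}\DM^{*}(X_{i})$ with $\colim_{x^{!}}\DM^{!}(X_{i})$ by twisting the levelwise equivalences $\DM^{!}(X_{i})\simeq\DM^{*}(X_{i})$ by the Thom motives $\Th(\Omega_{x_{0i}})$ (Proposition \ref{prop.conj.1}, via the spine argument for $\NN$), and only then can $f_{!}$ be produced by applying the pullback-formalism machinery to $\DM^{!}$. A DESCENT-style argument does not produce this.

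For the part you do address, the diagnosis is right (Nagata compactification is the reason the full Liu--Zheng statement is restricted to schemes), but the mechanism is not the one that works. The paper does not compactify each $f_{i}$ independently and then fight for coherence across levels; it applies Mann's criterion to the already-colimited functor with the decomposition $J_{\Ical},F_{\Ical}$ of levelwise open immersions and levelwise proper maps, and the decomposition property for a cartesian $f$ is obtained by compactifying once at the initial object $0\in\Ical$ and pulling back along the cartesian squares --- this is exactly why the hypothesis that $\Ical$ admits an initial object enters, and it dissolves the coherence problem you flag rather than resolving it by appeal to the (yet unproven) (F3)--(F4). Finally, for cartesian lft morphisms of pro-algebraic \emph{stacks}, where no compactification is available, the theorem still asserts $f_{!}\dashv f^{!}$ with (F1)--(F4); the paper obtains this by a separate construction (Proposition \ref{prop.cart.ext}) that builds a diagram $\Ical\times\Delta^{1,\op}\to\Corr{\Ccal}{E}$ directly out of the cartesian squares via the bisimplicial model of the correspondence category and takes its colimit. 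Your proposal has no mechanism for this case.
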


Before we continue with the proof, we want to show that $\DM(x,X)\simeq \DM(X)$ for classical pro-algebraic stacks, as one would expect. We will need the following result.

\begin{lem}
\label{lem.Gai.colim}
	Let $\Ecal$ be an $\infty$-category that admits finite limits. Let $F\colon \Ecal\rightarrow \Pr^{L}_{\omega}$ be a functor. Denote by $F_{\Ical}$ the composition of $\Fun(\Ical,F)$ with the colimit functor $\Fun(\Ical,\Pr^{L}_{\omega})\rightarrow \Pr^{L}_{\omega}$. Then we have the following.
	\begin{enumerate}
		\item[(1)] Let $X\in \Fun(\Ical,\Ecal)$. We denote by $X^{\op}$ the opposite functor. Further, let us denote by $F_{*}\colon \Ecal^{\op}\rightarrow \Pr^{R}$ the functor induced by the equivalence $\Pr^{L}\simeq (\Pr^{R})^{\op}$ \cite[Cor. 5.5.3.4]{HTT}. Then we have
	$$
		F_{\Ical}(X) \simeq \lim_{\Pr^{R}} F_{*} \circ X^{\op}, 
	$$
	where the equivalence is in $\ICat$. 
		\item[(2)]  For any $i_{0},i_{1}\in \Ical$ the composition
	$$
		F(X_{i_{0}})\xrightarrow{\ins_{i_{0}}^{X}} F_{\Ical}(X)\simeq \lim_{\Pr^{R}} F_{*} \circ X\xrightarrow{p_{i_{1}}^{X}} F(X_{i_{1}})
	$$
	is canonically equivalent to 
	$$
		\colim_{k\in \Ical, \alpha\colon i_{0}\rightarrow k, \beta\colon i_{1}\rightarrow k} F_{*}(X_{\beta})\circ F_{\Ical}(X_{\alpha})
	$$
	where the colimit is taken in $\Fun_{\Pr^{L}}(F(X_{i_{0}}), F(X_{i_{1}}))$.
	\end{enumerate}
\end{lem}
\begin{proof}
	The first assertion follows from \cite[Cor. 5.5.3.4]{HTT} (see also Remark \ref{rem.Gai.colim} below). A proof for (2) is given by Gaitsgory in the setting of DG-categories (cf. \cite[Lem. 1.3.6]{Gai} - note that the proof is completely analogous in this slightly more general setting).
\end{proof}

\begin{rem}
\label{rem.Gai.colim}
	The equivalence in Lemma \ref{lem.Gai.colim} for $X\in \Fun(\Ical,\Ecal)$ is induced via the following process \cite[Lem. 1.3.2]{Gai}. The projections $p^{X}_{i}\colon\lim_{\Pr^{R}} F_{*} \circ X\rightarrow X_{i}$ admit left adjoints, denoted by $\gamma_{i}^{X}$. These left adjoints assemble to an equivalence
	\begin{equation}
\label{eq.pres.colim}
		h_{X}\colon F_{\Ical}(X)\xrightarrow{\sim}  \lim_{i\in \Ical^{\op},x_{*}}F(X_{i}).
	\end{equation}
	This is an equivalence in $\Pr^{L}$, where we view the RHS naturally as an object in $\Pr^{L}$.
	The inverse of $h_{X}$ is equivalent to $\colim_{i\in \Ical} \ins^{X}_{i}\circ p_{i}^{X},$
	where $\ins^{X}_{i}\colon F(X_{i})\rightarrow F_{\Ical}(X)$ denotes the canonical map.
\end{rem}

Let us fix the above notation, as we will need it later on.

\begin{notation}
	We fix the notation as in Remark \ref{rem.Gai.colim}, i.e. for a pro-$\Ical$-algebraic stack $(x,X)$, we denote by $h_{X}$ the equivalence 
$$
	h_{X}\colon \DM(x,X)\rightarrow \lim_{i\in \Ical,x_{*}} \DM(X_{i}),
$$
we denote for any $i\in \Ical$ by 
$$
	\ins_{i}^{X}\colon \DM(X_{i})\rightarrow \DM(x,X) \textup{ resp. } p^{X}_{i}\colon \lim_{i\in \Ical,x_{*}} \DM(X_{i})\rightarrow \DM(X_{i})
$$
the canonical inclusion resp. projections and we set $\gamma_{i}^{X}\coloneqq h_{X}\circ\ins_{i}^{X}$.
\end{notation}

 \begin{prop}
\label{prop.pres.colim}
	Assume that $\Ical$ admits an initial object $0\in\Ical$. Let $(x,X)$ be a classical pro-$\Ical$-algebraic stack. Then the natural map
	$$
		 \DM(x,X)\rightarrow \DM(X)
	$$
	is an equivalence of DG-categories.
\end{prop}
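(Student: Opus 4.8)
The plan is to reduce the comparison to the level of (pro‑)schemes by smooth descent, and then to exploit that the defining colimit of $\DM(x,X)$ can be rewritten as a \emph{limit} via Lemma~\ref{lem.Gai.colim}, so that the only genuinely non‑formal point becomes an interchange of two limits. The hypothesis that $\Ical$ has an initial object $0$ is used precisely to produce a single, globally compatible atlas: choose a smooth atlas $p_{0}\colon \widetilde{X}_{0}\to X_{0}$ with $\widetilde{X}_{0}$ affine of finite type, and for each $i$ set $\widetilde{X}_{i}\coloneqq X_{i}\times_{X_{0}}\widetilde{X}_{0}$. Since $x_{0i}$ is affine, each $\widetilde{X}_{i}$ is a scheme and $\widetilde{X}_{i}\to X_{i}$ is a smooth atlas; the identity $\widetilde{X}_{j}=X_{j}\times_{X_{i}}\widetilde{X}_{i}$ shows the transition maps $\widetilde{X}_{j}\to\widetilde{X}_{i}$ are affine. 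Thus $\widetilde{X}\coloneqq\lim_{i}\widetilde{X}_{i}=X\times_{X_{0}}\widetilde{X}_{0}$ is a classical pro‑scheme and $\widetilde{X}\to X$ is a smooth atlas of the limit stack $X$ (which is algebraic, being affine over $X_{0}$). Taking \v{C}ech nerves levelwise yields classical pro‑schemes $\widetilde{X}_{\bullet}=\lim_{i}\widetilde{X}_{i,\bullet}$, whose face maps are smooth, transition maps affine, and the comparison squares cartesian.

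Next I would invoke descent. By \'etale, hence smooth, descent for $\DM^{*}$ one has $\DM(X_{i})\simeq\lim_{\Delta}\DM(\widetilde{X}_{i,\bullet})$ for every $i$, and likewise $\DM(X)\simeq\lim_{\Delta}\DM^{*}(\widetilde{X}_{\bullet})$ for the limit stack $X$; these are limits in $\DGCat$ along $*$‑pullbacks, computed on underlying $\infty$‑categories. On the other hand, Lemma~\ref{lem.Gai.colim} together with the equivalence $h_{X}$ identifies the defining colimit $\DM(x,X)=\colim_{i,x^{*}}\DM(X_{i})$ with the limit $\lim_{i,x_{*}}\DM(X_{i})$ of underlying $\infty$‑categories taken along the pushforwards, and the same applies levelwise to $\widetilde{X}_{\bullet}$. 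Combining the two descriptions gives
$$
	\DM(x,X)\simeq \lim_{i,x_{*}}\lim_{\Delta}\DM(\widetilde{X}_{i,\bullet}).
$$

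Now both directions are limits of underlying $\infty$‑categories, so Fubini for limits applies once the double assignment $(i,[n])\mapsto\DM(\widetilde{X}_{i,n})$ is promoted to a genuine functor on $\Ical^{\op}\times\Delta$: the pushforwards along the affine transition maps commute with the $*$‑pullbacks along the smooth face maps by smooth base change over the cartesian squares of the construction, and the required higher coherence is supplied by the correspondence/bisimplicial packaging of $\DM$. This yields $\DM(x,X)\simeq\lim_{\Delta}\lim_{i,x_{*}}\DM(\widetilde{X}_{i,\bullet})$. For each fixed $[n]$, the object $\widetilde{X}_{n}=\lim_{i}\widetilde{X}_{i,n}$ is a classical pro‑scheme, so continuity of $\DM^{*}$ for pro‑schemes (\cite[Thm.~14.3.1]{CD} with Zariski descent, equivalently the left Kan extension defining $\DM^{*}$ on $\AffSch^{\kappa}_{S}$) together with Lemma~\ref{lem.Gai.colim} gives $\lim_{i,x_{*}}\DM(\widetilde{X}_{i,n})\simeq\DM^{*}(\widetilde{X}_{n})$, naturally in $[n]$. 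Hence $\DM(x,X)\simeq\lim_{\Delta}\DM^{*}(\widetilde{X}_{\bullet})\simeq\DM(X)$, and tracking the insertion maps $\ins^{X}_{i}$ against the pullbacks $\pi_{i}^{*}\colon\DM(X_{i})\to\DM(X)$ along $\pi_{i}\colon X\to X_{i}$ shows the resulting equivalence is the natural comparison functor.

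The main obstacle is the interchange step: a filtered colimit against a cosimplicial totalization cannot be exchanged naively in $\DGCat$, and the decisive trick is to first pass to the limit presentation of Lemma~\ref{lem.Gai.colim}, after which both directions are limits and Fubini becomes available, with coherence of the bi‑indexed diagram guaranteed by smooth base change and the correspondence formalism. The secondary point requiring care is that $\DM^{*}$ satisfies smooth descent on the \emph{non}‑finite‑type stack $X$, which I would reduce to the \'etale descent of rational motives on affine schemes and the presentation of $X$ as the geometric realization of $\widetilde{X}_{\bullet}$.
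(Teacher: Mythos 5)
Your proposal is correct and follows essentially the same route as the paper's proof: pass to right adjoints via Lemma~\ref{lem.Gai.colim} so that both sides become limits, pull back a fixed affine smooth atlas of $X_{0}$ along the affine transition maps, apply descent levelwise and continuity of $\DM$ on the resulting affine pro-schemes, and interchange the two limits using smooth base change to get coherence of the bi-indexed diagram. The only cosmetic difference is that the paper uses a disjoint union of finite-type affines $\coprod_{k}U_{0,k}$ as the atlas rather than a single affine, which is what one needs for a general (not necessarily quasi-compact) algebraic stack.
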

\begin{proof}
For the proof we will use two immediate facts. The canonical map $\DM(x, X)\rightarrow
\DM(X)$ is by construction monoidal and preserves colimits. Moreover, to show that $\DM(x, X)\rightarrow
\DM(X)$ is an equivalence, we may work with the right adjoints
$$ 
	\DM(X)\rightarrow \lim_{i\in \Ical^{\op},x_{*}} \DM(X_{i})
$$
(this follows from Lemma \ref{lem.Gai.colim} applied to $F = \DM: \Stk^{\lft,\op}_{S} \rightarrow \textup{Pr}^{L}_{\omega}$).
By assumption $X_{0}$ admits a smooth cover by affine finite type $S$-schemes $U_{0,k}= \Spec(A_{k})$, for some set $K$ and $k\in K$. As $•X_{i}\rightarrow X_{0}$ is affine for all $i\in \Ical$, the pullback $U_{i,k}\coloneqq X_{i}\times_{X_{0}}U_{i,k}$ is also a smooth cover by affines. The same holds after passage to limits, i.e. for $U_{k}\coloneqq X\times_{X_{0}}U_{0,i}$ the projection $\coprod_{k\in K}U_{i}\rightarrow X$ is an effective epimorphism \cite[01YZ]{stacks-project}. In particular, as $\DM$ satisfies $h$-descent (cf. \cite[Thm. 2.2.16]{RS1}), we have $\lim_{\Delta}\DM(\Cv_{\bullet}(\coprod_{k\in K}U_{i}/X))\simeq \DM(X)$. By construction, each of the $U_{i}$ is equivalent to $\lim_{i\in\Ical} U_{i,k}$. As the $U_{i,k}$ are affine of finite type over $S$, we have 
$$
	\DM(U_{i})\simeq \DM(\lim_{i\in\Ical} U_{i,k})\simeq \colim_{i\in\Ical, x^{*}}\DM(U_{i,k})\simeq \lim_{i\in\Ical^{\op},x_{*}}\DM(U_{i,k}).
$$
This yields the equivalence
$$
	\DM(\Cv_{\bullet}(\coprod_{k\in K}U_{i}/X))\simeq  \lim_{i\in\Ical,x_{*}}\DM(\Cv_{\bullet}(\coprod_{k\in K}U_{i,k}X_{i})).
$$
Putting all of this together,  we claim that this yields
\begin{align*}
	\DM(X)\simeq \lim_{\Delta}\DM(\Cv_{\bullet}(\coprod_{k\in K}U_{i}/X)) &\simeq \lim_{i\in\Ical^{\op},x_{*}} \lim_{\Delta}\DM(\Cv_{\bullet}(\coprod_{k\in K}U_{i,k}/X_{i}))\\ &\simeq  \lim_{i\in\Ical^{\op},x_{*}} \DM(X_{i})\simeq \colim_{i\in\Ical, x^{*}} \DM(X_{i})
\end{align*}
as desired.\par 
Indeed, the only thing to check is the second equivalence. For this it is enough to see that $x_{ij}^{*}$ induces a map of simplicial objects 
$$
	\DM(\Cv_{\bullet}(\coprod_{k\in K}U_{j,k}/X_{j}))\rightarrow \DM(\Cv_{\bullet}(\coprod_{k\in K}U_{i,k}/X_{i})).
$$
But this follows immediately from smooth base change, as 
$$
	\Cv_{\bullet}(\coprod_{k\in K}U_{j,k}/X_{j})\simeq \Cv_{\bullet}(\coprod_{k\in K}U_{i,k}/X_{i})\times_{X_{i}}X_{j}.
$$
\end{proof}

\begin{rem}
	Let $Y$ be a scheme and let $(x, X)$ be a classical pro-algebraic stack such that each $X_{i}$ is representable by a finite type $Y$-scheme. The proof of Proposition \ref{prop.pres.colim} can be repeated to obtain the following more general result on schemes.
\begin{enumerate}
	\item[$\bullet$] Let $F\colon \Sch^{\ft,\op}_{Y}\rightarrow \textup{CALg}(\PrLO)$ be a Zariski sheaf, then $\colim F \circ x \simeq F(X)$.
\end{enumerate}
 This includes for example motives with integral coefficients and the stable homotopy category of Morel-Voevodsky.
\end{rem}

By definition if $(x,X)$ is a strict pro-algebraic stack then the transition maps $x_{ij}^{*}$ are fully faithful for all $i\rightarrow j\in \Ical$. In particular, we would expect that the pullback $\DM(X_{0})\rightarrow \DM(X)$ is fully faithful. At least when we work with $\DM(x,X)$ this is true, as our explicit analysis will show. If $(x,X)$ is classical, we see in particular that $\DM(X_{0})\rightarrow \DM(X)$ is indeed fully faithful.

\begin{rem}[Underling motive of strict pro-algebraic stacks]
\label{rem.underlying.motive}
Let us show how to compute the underlying motive in $\DM(S)$  of a strict pro-$\Ical$-algebraic stack, when $\Ical$ admits an initial object $0$.\par
	Let $(x,X)$ be a strict pro-algebraic stack. Then by Example \ref{ex.adjointable.mor} (2), the projection $c_{0}\colon (x,X)\rightarrow X_{0}$ is an adjointable map of pro-algebraic stacks. Thus, by Theorem \ref{thm.6-ff}, we obtain an adjunction
	$$
		 \begin{tikzcd}
			c_{0*}\colon \DM(x,X)\arrow[r,"",shift left = 0.3em]&\arrow[l,"",shift left = 0.3em]\DM(X_{0})\colon c_{0}^{*}
		\end{tikzcd}
	$$
	Moreover, we see from explicit computations in Lemma \ref{lem.Gai.colim} (2) that $c_{0}^{*}$ is fully faithful, since $c_{0}^{*}$ corresponds to canonical map $\DM(X_{0})\rightarrow \DM(x,X)$ and $c_{0*}$ to the projection.
	\par 
	In particular, we will see ath if $f\colon (x,X)\rightarrow S$ is strict, tame and $f_{0}\colon X_{0}\rightarrow S$ denote the structure map, we have for any $M\in \DM(S)$ the equivalence $f_{!}f^{!}M\simeq f_{0!}f_{0}^{!}M$.
\end{rem}

\section{(Motivic) six functor formalism on pro-algebraic stacks}

\label{sec.proof.6-ff}

In this section we will prove Theorem \ref{thm.6-ff}.

Moreover, we will not restrict us to the category of algebraic stacks but rather work in the most general context, as we will only need formal properties, such as a geometric setup in the sense of Mann \cite{Mann}. While this is a more abstract formalism we develop, the benefit is that we can also apply this formalism to other settings, for example ind-schemes. Such inverse limits of ind-schemes and \'etale cohomology were considered by Bouthier-Kazhdan-Varshavsky \cite{Kaz} where they were called \textit{placid}.

\subsection{Pullback formalisms}
\label{sec.pull}
We start by analyzing filtered colimits of pullback formalisms after Drew-Gallauer \cite{DG}. We will analyze such structures more generally in this subsection without using pro-algebraic stacks but work with general diagrams in an $\infty$-category $\Ecal$ admitting finite limits.

\begin{defi}[\cite{DG}]
\label{def.pf}
Let $\Ecal$ be an $\infty$-category that admits finite limits and let us fix a class of morphisms $P\subseteq \Fun(\Delta^{1},\Ecal)$ that is closed under pullback, equivalences and composition.
	A \textit{compactly generated pullback formalism on $\Ecal$ with respect to $P$} is a functor 
	$$
		\Mcal^{\otimes}\colon \Ecal^{\op}\rightarrow \textup{CAlg}(\PrLO_{\omega}),\quad f\mapsto f^{*}\quad\textup{(we denote the right adjoint of $f^{*}$ by $f_{*}$)}
	$$
	satisfying the following conditions.
	\begin{enumerate}
		\item[(1)] If a morphism $f\colon X\rightarrow Y$ of $\Ecal$ lies in $P$, then there exists a left adjoint $f_{\sharp}$ of $f^{*}$.
		\item[(2)] For each pullback square
		$$
			\begin{tikzcd}
				X\times_{Z} Y\arrow[r,"g'"]\arrow[d,"f'"]& X\arrow[d,"f"]\\
				Y\arrow[r,"g"]&Z
			\end{tikzcd}
		$$
		in $\Ecal$ such that $f$ lies in $P$, the exchange transformation
		$$
			f'_{\sharp}g'^{*}\rightarrow f'_{\sharp}g'^{*}f^{*}f_{\sharp}\rightarrow f'_{\sharp}f'^{*}g^{*}f_{\sharp}\rightarrow g^{*}f_{\sharp}
		$$
		is equivalence.
		\item[(3)] For each  morphism $f\colon X\rightarrow Y$ in $P$ the exchange transformation
		$$
			f_{\sharp}(M\otimes_{\Mcal^{\otimes}(X)} f^{*}N)\rightarrow f_{\sharp}M\otimes_{\Mcal^{\otimes}(Y)} N
		$$
		is an equivalence, for any $M\in \Mcal^{\otimes}(X)$ and $N\in \Mcal^{\otimes}(Y)$.
	\end{enumerate}
	We call the triple $(\Mcal^{\otimes},\Ecal,\Pcal)$ as above a \textit{pullback formalism}. 
\end{defi}

\begin{rem}
\label{rem.exchange.defi}
	For property (2) in Definition \ref{def.pf} we may equivalently ask for the $*$-exchange morphism $f^{*}g_{*}\rightarrow g'_{*}f'^{*}$ to be an equivalence \cite[Prop. 1.1.9]{ayoub}.
\end{rem}

\begin{example}
\label{ex.pf.1}
	Recall that in the assumptions \ref{notations}, we fixed an excellent noetherian scheme $S$ finite dimension. By \cite[Thm. 16.1.4, Cor. 6.2.2]{CD} the $\infty$-category $\DM(X,\QQ)$ is compactly generated for any finite type $S$-scheme and $f^{*}$ preserves compact objects. In particular, $\DM_{\QQ}$ induces a pullback formalism on finite type $S$-schemes. By descent this can be extended to locally of finite type algebraic stacks over $S$ \cite{RS1}.
\end{example}
 
We will start by extending $\Mcal^{\otimes}$ to diagrams in $\Ical$, via taking colimits.

\begin{rem}
\label{rem.coef.pres}
	Let us remark that $\Pr^{L}_{\omega}$ is presentable and thus so is $\textup{Mod}_{\Sp}(\PrLO_{\omega})$ and $\textup{CAlg}(\PrLO_{\omega})$ \cite[Lem. 5.3.2.9, Cor. 4.2.3.7, Prop. 3.2.3.5]{HA}.
\end{rem}
 
\begin{notation}
	Consider the functor
$$\Fun(\Ical,\Ecal^{\op})\rightarrow \Fun(\Ical,\textup{CAlg}(\PrLO_{\omega}))
$$
induced by postcomposition with $\Mcal^{\otimes}$.
Composing this functor with the colimit functor, we obtain
$$
	\Mcal^{\otimes}_{\Ical}\colon \Fun(\Ical,\Ecal^{\op})\rightarrow \textup{CAlg}(\PrLO_{\omega}).
$$
\end{notation}

If $(x,X)$ is a pro-algebraic stack, we see that the following notation agrees Definition \ref{def.DM}. 
 
 \begin{notation}
 	Let $\Mcal^{\otimes}$ be a pullback formalism on locally of finite type Artin $S$-stacks with respect to smooth morphisms. Let $(x,X)$ be a pro-$\Ical$-algebraic stack. Then we define $$\Mcal(x,X)\coloneqq \Mcal_{\Ical}^{\otimes}(x).$$
 \end{notation}

 \begin{prop}
 \label{prop.ind.pf}
 	Let $(\Mcal^{\otimes},\Ecal,\Pcal)$ be a pullback formalism. Let $\Pcal_{\Ical}$ denote the class of morphisms in $\Fun(\Ical,\Ecal^{\op})$  such that $f\in \Fun(\Ical,\Ecal^{\op})$ belongs to $\Pcal_{\Ical}$ if and only if each $f_{i}$ belongs to $\Pcal$ and each square in $\Mcal_{\Ical}(f)$ is adjointable, i.e. for any morphism $i\rightarrow j\in \Ical$ the induced morphism $f_{i}^{*}y_{ij*}\rightarrow x_{ij*}f_{j}^{*}$ is an equivalence \cite[Def. 4.7.4.13]{HA}. Then $\Mcal_{\Ical}$ is a pullback formalism on $\Fun(\Ical,\Ecal^{\op})$ for the class $\Pcal_{\Ical}$.
 \end{prop}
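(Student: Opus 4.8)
The functor $\Mcal_{\Ical}$ takes values in $\CAlg(\PrLO_{\omega})$ by construction, so each $\Mcal_{\Ical}(f)=f^{*}$ is automatically symmetric monoidal and colimit-preserving, hence admits a right adjoint $f_{*}$; thus $\Mcal_{\Ical}$ is a well-defined functor to $\CAlg(\PrLO_{\omega})$ and it remains only to verify that $\Pcal_{\Ical}$ is closed under equivalences, composition and pullback, and that conditions (1)--(3) of Definition \ref{def.pf} hold. The essential input is the presentation of Remark \ref{rem.Gai.colim}: the equivalence $h_{X}\colon\Mcal_{\Ical}(X)\xrightarrow{\sim}\lim_{i\in\Ical,x_{*}}\Mcal(X_{i})$ exhibits $\Mcal_{\Ical}(X)$ simultaneously as the colimit of the $\Mcal(X_{i})$ along the $x_{ij}^{*}$ and as the limit along the right adjoints $x_{ij*}$. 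Under this presentation $f^{*}=\colim_{i}f_{i}^{*}$ is computed levelwise: the naturality square $x_{ij}^{*}f_{i}^{*}\simeq f_{j}^{*}y_{ij}^{*}$ always commutes, being obtained by applying $\Mcal^{\otimes}$ to the commuting square $f_{i}\circ x_{ij}=y_{ij}\circ f_{j}$ of Definition \ref{def.adj}. Correspondingly $f_{*}$ is the levelwise $\lim_{i}f_{i*}$ on the limit side, and my plan is to identify, via $h$, every exchange transformation for $\Mcal_{\Ical}$ with a colimit over the filtered category $\Ical$ of the corresponding levelwise transformation.

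I would first treat condition (1), which is the conceptual heart. For $f\in\Pcal_{\Ical}$ each $f_{i}\in\Pcal$ admits a left adjoint $f_{i\sharp}$, and to glue these into a left adjoint of $f^{*}=\colim_{i}f_{i}^{*}$ it suffices to show that they are compatible with the colimit transition maps, i.e.\ that $y_{ij}^{*}f_{i\sharp}\simeq f_{j\sharp}x_{ij}^{*}$. This is precisely the horizontal left mate of the commuting square $x_{ij}^{*}f_{i}^{*}\simeq f_{j}^{*}y_{ij}^{*}$, and by the mate calculus of \cite[Def. 4.7.4.13]{HA} this mate is an equivalence if and only if the vertical right mate $f_{i}^{*}y_{ij*}\to x_{ij*}f_{j}^{*}$ is an equivalence --- which is exactly the $*$-adjointability built into the definition of $\Pcal_{\Ical}$. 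Hence the $f_{i\sharp}$, together with their levelwise unit and counit, assemble into an adjunction $f_{\sharp}=\colim_{i}f_{i\sharp}\dashv f^{*}$.

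Conditions (2) and (3) should then follow formally. For a pullback square in $\Fun(\Ical,\Ecal^{\op})$, which is computed levelwise, with the distinguished leg in $\Pcal_{\Ical}$, both the base-change transformation $f'_{\sharp}g'^{*}\to g^{*}f_{\sharp}$ and the projection transformation $f_{\sharp}(-\otimes f^{*}-)\to f_{\sharp}(-)\otimes-$ are, under $h$, the colimit over $\Ical$ of the levelwise transformations $f'_{i\sharp}g_{i}'^{*}\to g_{i}^{*}f_{i\sharp}$ respectively $f_{i\sharp}(-\otimes f_{i}^{*}-)\to f_{i\sharp}(-)\otimes-$. Each of these is an equivalence because $(\Mcal^{\otimes},\Ecal,\Pcal)$ is a pullback formalism, and a filtered colimit of equivalences is an equivalence; the only thing being used beyond Definition \ref{def.pf} for $\Mcal$ is that the transition functors carry the relevant adjoints to adjoints, which is again the content of the previous paragraph.

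Finally I would check the closure properties of $\Pcal_{\Ical}$. Closure under equivalences is immediate and closure under composition follows by pasting the mate squares above. The delicate point, and the step I expect to be the main obstacle, is closure under pullback: if $f\in\Pcal_{\Ical}$ and $f'\colon P=X\times_{Z}Y\to Y$ is its base change along an arbitrary $g$, then each $f'_{i}\in\Pcal$ since $\Pcal$ is stable under pullback, but one must still verify that the squares of $\Mcal_{\Ical}(f')$ are $*$-adjointable. My plan is a Beck--Chevalley argument in the cube whose back face is the ($*$-adjointable) $f$-square, whose two horizontal faces are the cartesian squares $P_{i}=X_{i}\times_{Z_{i}}Y_{i}$ with $f_{i}\in\Pcal$, and whose front face is the $f'$-square to be analysed. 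Combining levelwise smooth base change along the cartesian faces with the adjointability of the $f$-face, one checks that the mate of the front face becomes an equivalence after precomposition with $g_{i}'^{*}$; extending this over objects of the form $g_{i}'^{*}(-)\otimes f_{i}'^{*}(-)$ via the projection formula and invoking that such objects generate $\Mcal(P_{i})$ then yields the desired adjointability. This last generation input is where the argument is most sensitive to the precise formalism, and is the part I expect to require the most care.
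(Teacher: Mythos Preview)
Your treatment of conditions (1)--(3) is correct and close to the paper's, though organized on the opposite side of the colimit/limit equivalence of Remark~\ref{rem.Gai.colim}. For (1) you assemble $f_{\sharp}=\colim_{i}f_{i\sharp}$ directly, using that the mate equivalence $f_{j\sharp}x_{ij}^{*}\simeq y_{ij}^{*}f_{i\sharp}$ is equivalent to the $*$-adjointability hypothesis; the paper instead proves, as Lemma~\ref{lem.Gai.colim.2}, that on the limit presentation $f^{*}$ commutes with each projection $p_{i}$, hence preserves limits, and then invokes the adjoint functor theorem. Your route is more direct but hides a coherence step (that the $f_{i\sharp}$ assemble into an $\Ical$-diagram rather than a mere pointwise family) which is true but deserves a reference; the paper's route sidesteps this. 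For (2) and (3) your assertion that the exchange maps are ``colimits of the levelwise transformations'' is right in spirit; the paper makes it precise via compact generation (Lemma~\ref{lem.compact.gen}) together with the compatibility $f_{\sharp}\circ\ins_{i}^{X}\simeq\ins_{i}^{Y}\circ f_{i\sharp}$ of Lemma~\ref{lem.fsharp}: since every compact of $\Mcal_{\Ical}(X)$ lifts to some $\Mcal(X_{i})$ and all functors involved preserve colimits, the check reduces to a single level.

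The genuine gap is in your last paragraph. Your cube argument correctly shows that the $\sharp$-mate of the $f'$-transition square becomes an equivalence after precomposition with $g'^{*}_{i}$, but the concluding generation claim --- that objects of the form $g'^{*}_{i}(-)\otimes f'^{*}_{i}(-)$ generate $\Mcal(P_{i})$ --- is not justified in the generality of Definition~\ref{def.pf}; already for $\Mcal=\DM$ the generators $M(U)$ for smooth $U\to P_{i}$ have no reason to decompose this way. The paper's own proof does not verify closure of $\Pcal_{\Ical}$ under pullback at all: it checks (1)--(3) and stops. In the paper's applications this causes no harm, because either one works with the subclass of \emph{cartesian} morphisms (which is genuinely stable under pullback, as in Proposition~\ref{prop.six.ff}), or base change is invoked only for squares where adjointability of $f'$ is visible directly. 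So rather than trying to salvage the generation argument, you should either drop the closure verification as the paper does, or read condition (2) for pullback squares in which both $f$ and $f'$ are assumed to lie in $\Pcal_{\Ical}$.
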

 
 Proposition \ref{prop.ind.pf} is one of the crucial points in extending the motivic six functor formalism on $\DM$ for Artin-stacks to pro-algebraic stacks.
 But before we can prove this proposition, we need some additional lemmas.
  
 \begin{lem}
\label{lem.compact.gen}
	 Let $F\colon \Ecal^{\op}\rightarrow \Pr^{L}_{\omega}$ be a functor. Denote by $F_{\Ical}$ the composition of $\Fun(\Ical,F)$ with the colimit functor. Let $X$ be in $\Fun(\Ical,\Ecal^{\op})$. Then we have 
	$$
		F_{\Ical}(X) \simeq \Ind(\colim_{i\in \Ical,x^{*}}F(X_{i})^{c})
	$$ 
	and any $M\in F_{\Ical}(X)$ is compact if and only if there exists an $i\in \Ical$ and $M_{i}\in F(X_{i})^{c}$ that lifts $M$.
\end{lem}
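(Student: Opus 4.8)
The plan is to reduce the entire statement to the standard equivalence between compactly generated presentable $\infty$-categories and small idempotent-complete finitely cocomplete $\infty$-categories, and then to exploit that $\Ical$ is filtered.

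First I would recall that $\Ind$ defines an equivalence
$$
	\Ind\colon \mathrm{Cat}^{\mathrm{rex,idem}}_{\infty}\xrightarrow{\ \sim\ }\Pr^{L}_{\omega},
$$
where $\mathrm{Cat}^{\mathrm{rex,idem}}_{\infty}$ denotes the $\infty$-category of small idempotent-complete $\infty$-categories admitting finite colimits, with finite-colimit-preserving functors, and whose inverse is $\Ccal\mapsto \Ccal^{c}$ (see \cite[\S~5.5.7]{HTT}). Every transition map of the diagram $F\circ X\colon \Ical\rightarrow \Pr^{L}_{\omega}$ is a morphism of $\Pr^{L}_{\omega}$, hence preserves compact objects and restricts to a finite-colimit-preserving functor on compacts; thus taking compact objects levelwise yields a diagram $(F\circ X)^{c}\colon \Ical\rightarrow \mathrm{Cat}^{\mathrm{rex,idem}}_{\infty}$, $i\mapsto F(X_{i})^{c}$. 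Since $\Pr^{L}_{\omega}$ is presentable (Remark \ref{rem.coef.pres}) the colimit defining $F_{\Ical}(X)$ exists, and as an equivalence preserves all colimits I obtain
$$
	F_{\Ical}(X)\simeq \Ind\Big(\colim\nolimits_{\mathrm{Cat}^{\mathrm{rex,idem}}_{\infty}}(F\circ X)^{c}\Big).
$$

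Next I would use that $\Ical$ is filtered to replace this colimit by the underlying colimit in $\ICat$. Two facts enter here: the forgetful functor $\mathrm{Cat}^{\mathrm{rex}}_{\infty}\rightarrow \ICat$ preserves filtered colimits (admitting and preserving finite colimits is detected by finite diagrams, which commute with filtered colimits; cf.\ \cite{HA}), and a filtered colimit in $\ICat$ of idempotent-complete $\infty$-categories is again idempotent complete (\cite[\S~4.4.5]{HTT}). Consequently the plain $\ICat$-colimit $\mathcal{D}\coloneqq \colim_{i\in\Ical,x^{*}}F(X_{i})^{c}$ already lies in $\mathrm{Cat}^{\mathrm{rex,idem}}_{\infty}$ and computes the colimit there, which gives the asserted equivalence $F_{\Ical}(X)\simeq \Ind(\mathcal{D})$.

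Finally, for the characterization of compact objects I would note that, $\mathcal{D}$ being idempotent complete, the compact objects of $\Ind(\mathcal{D})$ are precisely $\mathcal{D}$; under the equivalence of the previous step the cocone functors $F(X_{i})^{c}\rightarrow \mathcal{D}$ are identified, by naturality of $\Ind$, with the restrictions to compacts of the insertion functors $\ins_{i}^{X}\colon F(X_{i})\rightarrow F_{\Ical}(X)$. If $M\in F_{\Ical}(X)$ is compact, then $M\in \mathcal{D}$, and since every object of a filtered colimit in $\ICat$ lies in the essential image of some cocone functor, $M\simeq \ins_{i}^{X}(M_{i})$ for some $i\in\Ical$ and some $M_{i}\in F(X_{i})^{c}$. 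Conversely, each $\ins_{i}^{X}$ is a morphism of $\Pr^{L}_{\omega}$ and therefore preserves compact objects, so any such lift forces $M$ to be compact. The step I expect to be the main obstacle is the reduction carried out in the third paragraph, namely verifying that finite cocompleteness and, above all, idempotent-completeness are preserved under filtered colimits in $\ICat$, so that the naive colimit of compact objects already has the correct universal property and no idempotent completion is needed.
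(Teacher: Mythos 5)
Your proposal is correct and follows essentially the same route as the paper: the paper compresses your first three paragraphs into a citation of \cite[Cor. 7.2.7]{GR} (whose proof is exactly your reduction through the equivalence $\Ind\colon \mathrm{Cat}^{\mathrm{rex,idem}}_{\infty}\simeq \Pr^{L}_{\omega}$), and then, just as you do, concludes via stability of idempotent completeness under filtered colimits in $\ICat$ and the fact that objects of a filtered colimit of $\infty$-categories lift to some stage. The step you single out as the main obstacle is precisely the one the paper disposes of by citing \cite[Cor. 4.4.5.21]{HTT}.
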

\begin{proof} 
	The first assertion follows from \cite[Cor. 7.2.7]{GR}, where the colimit on the right is taken in $\ICat$.\par 
    	Note that the filtered colimit of idempotent complete $\infty$-categories is idempotent complete \cite[Cor. 4.4.5.21]{HTT}. Thus, any compact object comes from an object of the colimit on the right hand side (cf. \cite[Lem. 5.4.2.4]{HTT} and \cite[02LG]{kerodon}).
\end{proof}

\begin{lem}
\label{lem.Gai.colim.2} If $f\colon X\rightarrow Y$ is a morphism in $\Fun(\Ical,\Ecal^{\op})$ such that each $f_{i}$ admits a left adjoint for all $i\in \Ical$ and each square is adjointable (in the sense of Proposition \ref{prop.ind.pf}), then $f^{*}\coloneqq F_{\Ical}(f)$ admits a left adjoint.
\end{lem}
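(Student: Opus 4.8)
The plan is to transport the statement into the limit description of $F_{\Ical}$ provided by Lemma \ref{lem.Gai.colim} and Remark \ref{rem.Gai.colim}, and there to recognize $f^{*}$ as a morphism in $\Pr^{R}$, which automatically admits a left adjoint. First I would fix, for every $i\in\Ical$, a left adjoint $f_{i\sharp}\dashv f_{i}^{*}$, and record that $f_{i}^{*}$ is thereby a \emph{right} adjoint, i.e. a morphism in $\Pr^{R}$ (it is of course also a morphism in $\Pr^{L}$, being part of the pullback formalism, but only the former is relevant here). Via the equivalences $h_{X}\colon F_{\Ical}(X)\xrightarrow{\sim}\lim_{i\in\Ical^{\op},x_{*}}F(X_{i})$ and $h_{Y}$, the two inverse systems $\{F(X_{i}),x_{ij*}\}$ and $\{F(Y_{i}),y_{ij*}\}$ become $\Ical^{\op}$-indexed diagrams valued in $\Pr^{R}$, whose limits compute $F_{\Ical}(X)$ resp. $F_{\Ical}(Y)$.

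The key step is to observe that the two hypotheses are exactly what is needed to promote the family $\{f_{i}^{*}\}$ to a morphism of these $\Pr^{R}$-valued diagrams. Tautologically $\{f_{i}^{*}\}$ is a natural transformation of the $\Pr^{L}$-diagrams $\{F(Y_{i}),y_{ij}^{*}\}\to\{F(X_{i}),x_{ij}^{*}\}$ — this is just functoriality of $F$ applied to $f$, and it is precisely what defines $f^{*}=F_{\Ical}(f)$. Passing the transition functors to their right adjoints, the mate of the naturality square attached to $i\to j$ is exactly the exchange morphism $f_{i}^{*}y_{ij*}\to x_{ij*}f_{j}^{*}$. The $*$-adjointability hypothesis says this mate is an equivalence, so the square $f_{i}^{*}y_{ij*}\simeq x_{ij*}f_{j}^{*}$ commutes, while the levelwise existence of $f_{i\sharp}$ guarantees that each component $f_{i}^{*}$ is a $\Pr^{R}$-morphism. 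Hence $\{f_{i}^{*}\}$ descends to a morphism of $\Ical^{\op}$-diagrams in $\Pr^{R}$, and by naturality of the colimit/limit comparison \cite[Cor. 5.5.3.4]{HTT} its image under $\lim_{\Pr^{R}}$ is identified, along $h_{X}$ and $h_{Y}$, with $f^{*}$.

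It then remains only to note that the limit functor $\lim\colon\Fun(\Ical^{\op},\Pr^{R})\to\Pr^{R}$ sends our diagram morphism to a morphism of $\Pr^{R}$, i.e. to a functor that is a right adjoint; under $\Pr^{L}\simeq(\Pr^{R})^{\op}$ this exhibits $f^{*}$ as a right adjoint, so it admits a left adjoint $f_{\sharp}$. (Alternatively one can run the adjoint functor theorem \cite[Cor. 5.5.2.9]{HTT}: $f^{*}$ is accessible, the projections $p_{i}^{X}$ preserve limits and jointly detect them, and since $p_{i}^{X}f^{*}\simeq f_{i}^{*}p_{i}^{Y}$ with $f_{i}^{*}$ and $p_{i}^{Y}$ limit-preserving, $f^{*}$ preserves limits; the compatibility $p_{i}^{X}f^{*}\simeq f_{i}^{*}p_{i}^{Y}$ is again exactly what adjointability supplies.) I expect the main obstacle to be the middle step: checking carefully that the \emph{same} family $\{f_{i}^{*}\}$ which computes the colimit-level functor $f^{*}$ also computes the limit-level functor, i.e. that the comparison $h$ is natural in the diagram and intertwines the $\Pr^{L}$- and $\Pr^{R}$-functorialities precisely when the adjointability mates are invertible. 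This is the only place the hypotheses enter, and it is the heart of the lemma.
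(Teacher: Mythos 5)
Your proposal matches the paper's own proof in substance: the paper likewise passes to the limit presentation via $h_{X},h_{Y}$, establishes the compatibility $p_{i}^{X}\circ f^{*}\simeq f_{i}^{*}\circ p_{i}^{Y}$ (the step you correctly single out as the heart) by evaluating both composites on the generators $h_{Y}\circ\ins_{Y}^{j}$ using the explicit colimit formula of Lemma \ref{lem.Gai.colim}~(2) and matching the resulting colimits $\colim X_{ik*}X_{jk}^{*}f_{j}^{*}\simeq\colim f_{i}^{*}Y_{ik*}Y_{jk}^{*}$ termwise via adjointability, and then concludes that $f^{*}$ preserves limits and hence admits a left adjoint. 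Your parenthetical ``alternative'' is essentially verbatim the paper's argument, so the only thing you defer rather than carry out is that verification of the compatibility, which is precisely where the paper's proof does its work.
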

\begin{proof}
This lemma is stated without proof in \cite[Prop. 5.1.8 (c)]{Kaz}. As far as we know, there is no written proof for this result in the literature, so we provide one.\par 
	By Lemma \ref{lem.Gai.colim} (1), we see that $f_{*}\colon  \lim_{\Ical} F_{*} \circ X \rightarrow \lim_{\Ical} F_{*} \circ Y$ admits a left adjoint $f^{*}$, that has to be compatible with the identification of $\lim_{\Ical} F_{*}$ and $F_{\Ical}$. We claim that the following diagram is commutative (up to homotopy)
	$$
		\begin{tikzcd}
			\lim_{\Ical} F_{*} \circ Y\arrow[r,"p_{i}^{Y}"]\arrow[d,"f^{*}"]& F(Y_{i})\arrow[d,"f_{i}^{*}"]\\
			\lim_{\Ical} F_{*} \circ X\arrow[r,"p_{i}^{X}"]&F(X)
		\end{tikzcd}
	$$
	for all $i\in \Ical$. This claim implies that $f^{*}$ commutes with limits and therefore admits a left adjoint $f_{\sharp}$.\par 
	Let us show the claim. For this, note that we have the following diagram with commutative squares (up to homotopy)
	$$
		\begin{tikzcd}
			F(Y_{j})\arrow[r,"\ins_{Y}^{j}"]\arrow[d,"f_{j}^{*}"]& F_{\Ical}(Y)\arrow[d,"f^{*}"]\arrow[r,"h_{Y}"]&\lim_{\Ical} F_{*} \circ Y\arrow[d,"f^{*}"]\\
			F(X_{j})\arrow[r,"\ins_{X}^{j}"]&F_{\Ical}(X)\arrow[r,"h_{X}"]&\lim_{\Ical} F_{*} \circ X 
		\end{tikzcd}
	$$
	for any $j\in \Ical$. First, we show that $p_{i}^{Y}\circ f^{*}$ commutes with colimits. Indeed, it is enough to see that $p_{i}^{Y}\circ f^{*}\circ h_{X}\circ \ins_{X}^{j}$ commutes with colimits, then is equivalent to the induced map $F_{\Ical}(X)\rightarrow F(Y_{i})$ in $\Pr^{L}$, which necessarily commutes with colimits. By Lemma \ref{lem.Gai.colim} (2), we have
$$
	p_{i}^{X}\circ f^{*}\circ h_{Y}\circ \ins_{Y}^{j}\simeq p_{i}^{Y}\circ h_{X}\circ \ins_{X}^{j}\circ f_{j}^{*}\simeq \colim_{i\rightarrow k, j\rightarrow k} X_{ik*}\circ X_{jk}^{*}\circ f_{j}^{*} 
$$
(recall our notations in the introduction under Section \ref{notations}).
By construction each of the functors $X_{ik*}, X_{jk}^{*}, f_{j}^{*}$ commutes with colimits and thus also the left hand side of the equivalence. Analogously, one can show that $f_{i}^{*}\circ p_{i}^{Y}$ commutes with colimits. Therefore, it is enough to show that 
$$
	p_{i}^{X}\circ f^{*}\circ h_{Y}\circ \ins_{Y}^{j}\simeq f_{i}^{*}\circ p_{i}^{Y}\circ h_{Y}\circ \ins_{Y}^{j}.
$$
The left hand side is equivalent to $\colim_{i\rightarrow k, j\rightarrow k} X_{ik*}\circ X_{jk}^{*}\circ f_{j}^{*}$ by the above and analogously, we see that the right hand side is equivalent to $\colim_{i\rightarrow k, j\rightarrow k} f_{i}^{*}\circ Y_{ik*}\circ Y_{jk}^{*}$. Since each square in $F(f)$ is right adjointable these colimits are in fact equivalent, proving the claim and therefore assertion.
\end{proof}

\begin{lem}
\label{lem.fsharp}
Let $f$ be a morphism in $\Fun(\Ical,\Ecal^{\op})$ such that $f_{i}$ admits a left adjoint for all $i\in \Ical$. Let $f_{\sharp}$ denote the left adjoint of $f$ given by Lemma \ref{lem.Gai.colim.2}. Then the following square 
	$$
		\begin{tikzcd}
			F(X_{i})\arrow[r,"\ins_{X}^{i}"]\arrow[d,"f_{i\sharp}"]& F_{\Ical}(X)\arrow[d,"f_{\sharp}"]\\
			F(Y_{i})\arrow[r,"\ins_{Y}^{i}"]&F_{\Ical}(Y)
		\end{tikzcd}
	$$
	commutes (up to homotopy) for all $i\in \Ical$.
\end{lem}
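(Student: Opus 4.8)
The plan is to deduce the commutativity of the displayed $\sharp$-square from the already-established commutativity of the corresponding $*$-square by passing to right adjoints, exploiting the equivalence $\Pr^{L}\simeq(\Pr^{R})^{\op}$ of \cite[Cor.~5.5.3.4]{HTT}. All four functors in the square lie in $\Pr^{L}$: the insertions $\ins_{X}^{i},\ins_{Y}^{i}$ are the canonical maps into colimits computed in $\Pr^{L}_{\omega}$, and $f_{\sharp},f_{i\sharp}$ are left adjoints, hence colimit preserving. Consequently, giving the commutative square we want is the same datum as giving the commutative square of their right adjoints. Since $(f_{\sharp})^{R}\simeq f^{*}$ by the very construction in Lemma \ref{lem.Gai.colim.2} and $(f_{i\sharp})^{R}\simeq f_{i}^{*}$, it therefore suffices to exhibit an equivalence
$$
	(\ins_{X}^{i})^{R}\circ f^{*}\;\simeq\; f_{i}^{*}\circ(\ins_{Y}^{i})^{R}
$$
of functors $F_{\Ical}(Y)\to F(X_{i})$.

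First I would compute the right adjoint of each insertion. By Remark \ref{rem.Gai.colim}, the functor $\gamma_{i}^{X}=h_{X}\circ\ins_{X}^{i}$ is the left adjoint of the projection $p_{i}^{X}$. Since $h_{X}$ is an equivalence, writing $\ins_{X}^{i}=h_{X}^{-1}\circ\gamma_{i}^{X}$ and taking right adjoints of the composite yields
$$
	(\ins_{X}^{i})^{R}\simeq p_{i}^{X}\circ h_{X},
$$
and symmetrically $(\ins_{Y}^{i})^{R}\simeq p_{i}^{Y}\circ h_{Y}$. Substituting these identifications, the equivalence to be proved becomes $p_{i}^{X}\circ h_{X}\circ f^{*}\simeq f_{i}^{*}\circ p_{i}^{Y}\circ h_{Y}$.

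At this point the statement is a concatenation of two facts already recorded in the proof of Lemma \ref{lem.Gai.colim.2}. The right-hand commuting square there expresses the compatibility of the equivalences $h$ with pullback, namely $h_{X}\circ f^{*}\simeq f^{*}\circ h_{Y}$, where on the right $f^{*}$ now denotes the induced map on the limits $\lim_{\Ical}F_{*}\circ(-)$. The main claim of that same proof is the commuting square $p_{i}^{X}\circ f^{*}\simeq f_{i}^{*}\circ p_{i}^{Y}$ at the level of these limits. Chaining the two gives
$$
	p_{i}^{X}\circ h_{X}\circ f^{*}\simeq p_{i}^{X}\circ f^{*}\circ h_{Y}\simeq f_{i}^{*}\circ p_{i}^{Y}\circ h_{Y},
$$
which is precisely the required equivalence.

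I expect the only genuine subtlety to be the reduction step rather than any calculation: one must verify that all four functors indeed lie in $\Pr^{L}$, so that passing to right adjoints via $\Pr^{L}\simeq(\Pr^{R})^{\op}$ is legitimate and transports the homotopy filling the square, and one must ensure that the right adjoint of $f_{\sharp}$ is genuinely the given $f^{*}$ (which is exactly how $f_{\sharp}$ was produced in Lemma \ref{lem.Gai.colim.2}) and not merely some abstract adjoint. Once these identifications are in place the argument is purely formal, reusing the two squares from the proof of Lemma \ref{lem.Gai.colim.2} with no new computation.
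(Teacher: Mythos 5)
Your argument is correct and matches the paper's proof in substance: both reduce the claim to the two commuting squares established in (the proof of) Lemma \ref{lem.Gai.colim.2} — the compatibility of $f^{*}$ with the equivalences $h$, and $p_{i}^{X}\circ f^{*}\simeq f_{i}^{*}\circ p_{i}^{Y}$ — and conclude by a purely formal passage between adjoints. The only cosmetic difference is that you take right adjoints of the whole $\sharp$-square (identifying $(\ins_{X}^{i})^{R}\simeq p_{i}^{X}\circ h_{X}$), whereas the paper first transports the square along $h$ and then takes left adjoints of the $*$-square; these are the same maneuver viewed from opposite sides.
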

\begin{proof}
	By construction of $f_{\sharp}$ the square 
	$$
		\begin{tikzcd}
			F_{\Ical}(X)\arrow[r,"h_{X}"]\arrow[d,"f_{\sharp}"]& \lim_{\Ical}F_{*}\circ X\arrow[d,"f_{\sharp}"]\\
			F_{\Ical}(Y)\arrow[r,"h_{Y}"]&\lim_{\Ical}F_{*}\circ Y
		\end{tikzcd}
	$$
	commutes (up to homotopy). Thus, we may check if the square in the lemma commutes after applying $h_{X}$. But the proof of Lemma \ref{lem.Gai.colim.2} shows that the square 
	$$
		\begin{tikzcd}
			\lim_{\Ical} F_{*} \circ Y\arrow[r,"p_{i}^{Y}"]\arrow[d,"f^{*}"]& F(Y_{i})\arrow[d,"f_{i}^{*}"]\\
			\lim_{\Ical} F_{*} \circ X\arrow[r,"p_{i}^{X}"]&F(X_{i})
		\end{tikzcd}
	$$
	commutes (up to homotopy) for all $i\in \Ical$. So in particular, we conclude by passing to left adjoints.
\end{proof}

\begin{proof}[Proof of Proposition \ref{prop.ind.pf}]
	Assertion (1) follows from Lemma \ref{lem.Gai.colim.2}.\par
We are left to show (2) and (3). Invoking Lemma \ref{lem.Gai.colim} resp. Lemma \ref{lem.fsharp} and the fact that each compact of $\Mcal_{\Ical}(X)$ comes from some level $i\in \Ical$ (cf. Lemma \ref{lem.compact.gen}). So, we can check (2) and (3) on some $k\in\Ical$ (note that we use that $\Ical$ is filtered) and this follows from the definition of $\Mcal^{\otimes}$.
\end{proof}

\subsection{Six functor formalism}

In this section, we want to prove Theorem \ref{thm.6-ff}. We will first give a proof in the $(\dagger)$ case. Afterwards, we will give a proof in the $(\ast)$ case and complete the proof. 

Before we continue, we want to show that properties (M1)-(M5) are a formal consequence of the compact generation of $\DM(x,X)$ (see Lemma \ref{lem.compact.gen}).

\begin{lem}
\label{lem.M}
	In the setting of Theorem \ref{thm.6-ff}, assuming $f_{!}\dashv f^{!}$ exists, satisfying (F1)-(F4), the properties (M1) and (M2) hold. If $\Ical$ admits an initial object $0\in\Ical$, then also (M3) - (M5) hold.
\end{lem}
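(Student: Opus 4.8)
The plan is to deduce each of (M1)--(M5) from its already-known levelwise counterpart for $\DM$ on algebraic stacks locally of finite type, by transporting the statement across the colimit $\DM(x,X)=\colim_{i,x^{*}}\DM(X_{i})$. Two principles should do all the work. First, by Lemma \ref{lem.compact.gen} we have $\DM(x,X)\simeq\Ind(\colim_{i\in\Ical,x^{*}}\DM(X_{i})^{c})$ and every compact object is of the form $\ins_{i}^{X}(M_{i})$ for some $i\in\Ical$ and some $M_{i}\in\DM(X_{i})^{c}$; hence every object is a filtered colimit of such. Second, I would record that each of the six operations is compatible with the insertion functors: for $f^{*}$ and (when it exists) $f_{\sharp}$ this is the content of the colimit construction and of Lemma \ref{lem.fsharp}, while for $f_{*},f_{!},f^{!}$ the compatibilities $f_{*}\ins_{i}\simeq\ins_{i}f_{i*}$, $f_{!}\ins_{i}\simeq\ins_{i}f_{i!}$ and $f^{!}\ins_{i}\simeq\ins_{i}f_{i}^{!}$ are obtained by passing to adjoints and invoking base change (F3). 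Granting these, any natural transformation between composites of the six operations becomes, after precomposition with $\ins_{i}^{X}$ and using exactness of $\ins_{i}^{X}$, the $\ins_{i}^{X}$-image of the corresponding levelwise transformation. Since all the operations in play preserve filtered colimits ($f^{*},f_{\sharp},f_{!}$ as left adjoints, $f^{!}$ by (F2), and $f_{*},i_{*},j_{*}$ because the pullbacks preserve compact objects), and the generators $\ins_{i}^{X}(M_{i})$ generate under filtered colimits, an equivalence or fiber sequence of such composites may be checked on these generators, i.e. levelwise.

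For (M1), the morphism $p\colon(q,\AA^{1}_{X})\to(x,X)$ is cartesian and $p^{*}$ preserves compacts, so on compact subcategories it restricts to $\colim_{i}p_{i}^{*}\colon\colim_{i}\DM(X_{i})^{c}\to\colim_{i}\DM(\AA^{1}_{X_{i}})^{c}$. Each $p_{i}^{*}$ is fully faithful by homotopy invariance of $\DM$ on $X_{i}$, and a filtered colimit of fully faithful functors is fully faithful (compute mapping spaces in the colimit and use naturality of $p^{*}$ with respect to the transition maps); hence $p^{*}=\Ind(\colim_{i}p_{i}^{*})$ is fully faithful. For (M2), the cartesian closed immersion $i$ and its open complement $j$ give the localization fiber sequences $i_{k!}i_{k}^{!}\to\id\to j_{k*}j_{k}^{*}$ and $j_{k!}j_{k}^{!}\to\id\to i_{k*}i_{k}^{*}$ in each $\DM(X_{k})$; precomposing the candidate pro-level sequences with $\ins_{k}^{X}$ and using the insertion-compatibilities above produces exactly the $\ins_{k}^{X}$-image of these levelwise sequences. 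As $\ins_{k}^{X}$ is exact and the functors involved preserve filtered colimits, the pro-level sequences are fiber sequences on the generators $\ins_{k}^{X}(M_{k})$, hence everywhere.

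With an initial object $0\in\Ical$, (M3)--(M5) follow the same pattern, the relevant data now descending from level $0$ and being pulled back cartesianly to all levels. For (M3), $\Th(V_{0})=p_{\sharp}s_{*}$ preserves compacts and restricts on $\colim_{i}\DM(X_{i})^{c}$ to the filtered colimit $\colim_{i}\Th(V_{i})$ of the levelwise Thom functors $\Th(V_{i})=p_{i\sharp}s_{i*}$ (compatible with the transition maps by base change, since $V_{i}=V_{0}\times_{X_{0}}X_{i}$); each $\Th(V_{i})$ is an equivalence by levelwise T-stability, so the colimit is an equivalence on compacts and $\Th(V_{0})$ is an equivalence. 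For (M4), the insertion $\ins_{0}^{X}$ is symmetric monoidal (the colimit is computed in $\CAlg(\Pr^{L}_{\omega})$), so $\ins_{0}^{X}1_{X_{0}}\simeq 1_{X}$ and $\ins_{0}^{X}$ commutes with the Tate twist and shift defining $M(1)$; combined with $p_{\sharp}\ins_{0}^{X}\simeq\ins_{0}^{X}p_{0\sharp}$ and $s_{*}\ins_{0}^{X}\simeq\ins_{0}^{X}s_{0*}$ this yields $\Th(V_{0})1_{X}\simeq\ins_{0}^{X}\big(\Th(V_{0})1_{X_{0}}\big)\simeq\ins_{0}^{X}\big(1_{X_{0}}(n)[2n]\big)\simeq 1_{X}(n)[2n]$ from levelwise orientation. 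For (M5), levelwise purity gives $f_{i}^{!}\simeq\Th(\Omega_{f_{i}})1_{X_{i}}\otimes f_{i}^{*}$, and since $f$ is cartesian $\Omega_{f_{i}}$ is pulled back from $\Omega_{f_{0}}$; the insertion-compatibility of $f^{!}$, of $\otimes$ and of $f^{*}$ then assembles these into the claimed equivalence $f^{!}\simeq\Th(\Omega_{f_{0}})1_{X}\otimes f^{*}$.

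The main obstacle I anticipate is the second principle above: establishing that the pushforwards $f_{*},f_{!}$ and the exceptional pullback $f^{!}$ commute with the insertion functors $\ins_{i}^{X}$. Unlike $f^{*}$ and $f_{\sharp}$, which are built directly from the levelwise functors by the colimit construction (Lemmas \ref{lem.Gai.colim} and \ref{lem.fsharp}), these operations are produced by the adjunctions and base change of Theorem \ref{thm.6-ff}, so their compatibility with $\ins_{i}^{X}$ must be extracted from (F3) together with the fact that every compact object comes from a single level (Lemma \ref{lem.compact.gen}). Once this bookkeeping is in place, each remaining step is a mechanical reduction to the corresponding well-known property of $\DM$ on algebraic stacks.
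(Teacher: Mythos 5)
Your proposal is correct and follows essentially the same route as the paper's (much terser) proof: reduce each property to its levelwise counterpart by checking on compact objects, which by Lemma \ref{lem.compact.gen} all arise from a single level, using the compatibility of the operations with the insertion functors and the fact that everything in sight preserves filtered colimits. The only cosmetic difference is that the paper deduces (M4) from the localization sequence rather than by pushing the levelwise orientation through the monoidal functor $\ins_{0}^{X}$, but both reduce to the same level-$0$ statement.
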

\begin{proof}
	Arguing as in the proof of Proposition \ref{prop.ind.pf} and using descent the properties (M1) - (M3) reduce to questions of $\DM$ on schemes, which is well known \cite{CD}. The property (M4) follows from the localization sequence.
	
	If $f$ is cartesian, smooth and then the relative dimension of $f_{i}$ and $f_{0}$ agree for all $i\in \Ical$. From here (M5) follows similarly as before.
\end{proof}

\subsubsection{Proof of Theorem \ref{thm.6-ff} for adjointable maps}

In this subsection, we will assume that we are in the setting $(\dagger)$ and we simply say pro-algebraic instead of pro-$\NN_{0}$-algebraic.

\begin{lem}
\label{lem.spine}
	Let $\Ccal$ be an $\infty$-category. Let $\textup{Spine}[\NN_{0}]$ be the simplicial subset of $\NN_{0}$ of finite vertices that are joined by edges (or more informally, we forget about all $n\geq 2$ simplices in $\NN_{0}$). Then the restriction $\Fun(\NN_{0},\Ccal)\rightarrow \Fun(\textup{Spine}[\NN_{0}],\Ccal)$ is a trivial Kan fibration.
	
	In particular, any natural transformation of functors $\alpha\colon \textup{Spine}[\NN_{0}]\times \Delta^{1}\rightarrow \Ccal$ can be lifted to a natural transformation in $\aghat\colon \NN_{0}\times \Delta^{1}\rightarrow \Ccal$.
\end{lem}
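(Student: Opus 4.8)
The plan is to deduce both assertions from the single statement that the inclusion $\textup{Spine}[\NN_{0}]\hookrightarrow \NN_{0}$ is inner anodyne. Granting this, the first claim is immediate: since $\Ccal$ is an $\infty$-category, the projection $\Ccal\to \Delta^{0}$ is an inner fibration, and restriction along an inner anodyne map against an inner fibration yields a trivial Kan fibration $\Fun(\NN_{0},\Ccal)\to \Fun(\textup{Spine}[\NN_{0}],\Ccal)$ by \cite[Cor.~2.3.2.5]{HTT}. The second claim then follows formally: a trivial Kan fibration is surjective on $n$-simplices for all $n$ (equivalently, has the right lifting property against $\emptyset\hookrightarrow\Delta^{1}$), so the $1$-simplex $\alpha\colon \Delta^{1}\to \Fun(\textup{Spine}[\NN_{0}],\Ccal)$ lifts to a $1$-simplex $\aghat\colon \Delta^{1}\to\Fun(\NN_{0},\Ccal)$, i.e.\ to the desired natural transformation $\NN_{0}\times\Delta^{1}\to\Ccal$.

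So the entire content is the inner anodyne-ness of the infinite spine inclusion. First I would recall the finite case: the spine inclusion $\textup{Spine}[\Delta^{n}]\hookrightarrow \Delta^{n}$ is inner anodyne for every $n$ (Joyal; cf.\ \cite{kerodon}), together with the fact that the class of inner anodyne maps is weakly saturated, hence closed under pushout and countable composition. Next I would exhaust $\NN_{0}$ starting from its spine: writing $\Delta^{\leq n}:=\Delta^{\{0,\ldots,n\}}$, set
$$
	S_{n}:=\textup{Spine}[\NN_{0}]\cup \Delta^{\leq n},
$$
so that $S_{1}=\textup{Spine}[\NN_{0}]$, the inclusions $S_{n-1}\hookrightarrow S_{n}$ are monomorphisms, and $\NN_{0}=\colim_{n}S_{n}$. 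Then $\textup{Spine}[\NN_{0}]\hookrightarrow \NN_{0}$ is the countable composition of the maps $S_{n-1}\hookrightarrow S_{n}$, and it suffices to show each of these is inner anodyne.

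For this I would identify $S_{n-1}\hookrightarrow S_{n}$ as a cobase change. A direct computation of the intersection gives $S_{n-1}\cap \Delta^{\leq n}=A_{n}$, where $A_{n}:=\Delta^{\{0,\ldots,n-1\}}\cup_{\Delta^{\{n-1\}}}\Delta^{\{n-1,n\}}$ is the big face opposite the vertex $n$ glued to the last edge; consequently there is a pushout square with $A_{n}\hookrightarrow \Delta^{\leq n}$ on one side and $S_{n-1}\hookrightarrow S_{n}$ on the other. Thus $S_{n-1}\hookrightarrow S_{n}$ is a pushout of $A_{n}\hookrightarrow \Delta^{n}$, and it remains to see that this two-piece \emph{Segal gluing} $\Delta^{\{0,\ldots,k\}}\cup_{\Delta^{\{k\}}}\Delta^{\{k,\ldots,n\}}\hookrightarrow\Delta^{n}$ (here $k=n-1$) is inner anodyne. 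This is exactly the standard building block appearing in the proof that finite spine inclusions are inner anodyne, and I would invoke it as such, or prove it by the usual inner-horn filling, attaching the missing non-degenerate simplices — those containing the vertex $n$ together with some vertex $\leq n-2$ — in increasing dimension.

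I expect the main obstacle to be precisely this last point: verifying, rather than merely citing, that the Segal gluing $A_{n}\hookrightarrow\Delta^{n}$ is inner anodyne. The subtlety is that one cannot deduce it by a naive two-out-of-three from the spine inclusions $\textup{Spine}[\Delta^{n}]\hookrightarrow A_{n}\hookrightarrow\Delta^{n}$, since inner anodyne maps are not closed under right cancellation; the clean route is to realize $A_{n}\hookrightarrow\Delta^{n}$ as a composite of pushouts of inner horns $\Lambda^{m}_{i}\hookrightarrow\Delta^{m}$ with $0<i<m$, which requires choosing the order of attachment with some care. Everything else — the identification of the intersections $S_{n-1}\cap\Delta^{\leq n}$, the resulting pushout squares, and the passage to the countable composition — is routine once the filtration $(S_{n})$ is in place.
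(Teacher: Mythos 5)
Your proof is correct and follows essentially the same route as the paper: both reduce the lemma to the fact that $\textup{Spine}[\NN_{0}]\hookrightarrow \NN_{0}$ is inner anodyne and then conclude via the lifting property of inner anodyne maps against inner fibrations (\cite[Cor.~2.3.2.5]{HTT}). The only difference is that the paper simply cites \cite[03HK]{kerodon} for the inner anodyne-ness, whereas you additionally supply a correct proof sketch of that fact via the filtration $S_{n}$ and the Segal gluings $\Delta^{\{0,\ldots,n-1\}}\cup_{\Delta^{\{n-1\}}}\Delta^{\{n-1,n\}}\hookrightarrow\Delta^{n}$.
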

\begin{proof}
	Indeed, the functor $\textup{Spine}[\NN_{0}]\rightarrow \NN_{0}$ is inner anodyne \cite[03HK]{kerodon} proving the claim.
\end{proof}

\begin{prop}
\label{prop.conj.1}
	Let $(x,X)$ be a tame pro-algebraic stack, then 
	$$
		\DM(x,X)\simeq \colim_{i\in\NN_{0},x^{!}}\DM^{!}(X_{i}).
	$$
\end{prop}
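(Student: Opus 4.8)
The plan is to exhibit a single natural equivalence between the two diagrams $\NN_0 \to \DGCat$ whose colimits compute the two sides, and then pass to colimits. Write $D^{\ast}\colon \NN_0 \to \DGCat$ for the functor $n \mapsto \DM(X_n)$ with transition maps the $\ast$-pullbacks $x_n^{\ast}$ along $x_n = x_{n,n+1}\colon X_{n+1}\to X_n$, so that $\DM(x,X) = \colim D^{\ast}$ by Definition \ref{def.DM}; and write $D^{!}$ for the functor with the same objects $\DM^{!}(X_n)=\DM(X_n)$ but transition maps the $!$-pullbacks $x_n^{!}$, so that $\colim D^{!} = \colim_{i\in\NN_0,x^{!}}\DM^{!}(X_i)$. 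Since taking colimits in $\DGCat$ sends equivalent diagrams to equivalent objects, it suffices to produce a natural equivalence $D^{\ast}\simeq D^{!}$.

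The input is relative purity. Because $(x,X)$ is tame, each $x_n$ is smooth, so purity for $\DM$ furnishes a natural equivalence $x_n^{!}(-)\simeq x_n^{\ast}(-)\otimes \Th(\Omega_{x_n})$, where the Thom twist $\Th(\Omega_{x_n})$ is an invertible endofunctor of $\DM(X_{n+1})$. Since $0\in\NN_0$ is initial and each composite $x_{0n}\colon X_n\to X_0$ is smooth, I set $\phi_n\coloneqq (-)\otimes \Th(\Omega_{x_{0n}})$, an autoequivalence of $\DM(X_n)$ with $\phi_0=\id$. The cotangent sequence $0\to x_n^{\ast}\Omega_{x_{0n}}\to \Omega_{x_{0,n+1}}\to \Omega_{x_n}\to 0$, together with compatibility of $\Th$ with $\ast$-pullback and its multiplicativity in short exact sequences (in the oriented rational theory $\DM_{\QQ}$, $\Th$ is a Tate twist by the locally constant relative dimension), gives $\Th(\Omega_{x_{0,n+1}})\simeq x_n^{\ast}\Th(\Omega_{x_{0n}})\otimes \Th(\Omega_{x_n})$. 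Combining this with purity yields, for each edge $n\to n+1$, a commuting square
$$
	\phi_{n+1}\circ x_n^{\ast}\;\simeq\; x_n^{!}\circ \phi_n .
$$
This is exactly the data of a natural transformation $D^{\ast}\to D^{!}$ restricted to the spine.

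This is where Lemma \ref{lem.spine} does the essential work. A priori, upgrading these level-wise squares to a coherent $\infty$-categorical natural equivalence out of $\NN_0$ would require infinitely many higher homotopies; but the squares above constitute precisely a map $\mathrm{Spine}[\NN_0]\times \Delta^1 \to \DGCat$ restricting to $D^{\ast}$ and $D^{!}$ on the two ends of $\Delta^1$. By Lemma \ref{lem.spine} this lifts to a natural transformation $\widehat{\alpha}\colon \NN_0\times\Delta^1 \to \DGCat$; since the restriction along the spine is a trivial Kan fibration we may fix its endpoints to be $D^{\ast}$ and $D^{!}$, and since every vertex of $\NN_0$ already lies in the spine, the components of $\widehat{\alpha}$ are the equivalences $\phi_n$. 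Hence $\widehat{\alpha}$ is a natural equivalence $D^{\ast}\simeq D^{!}$, and passing to colimits gives $\DM(x,X)\simeq \colim_{i\in\NN_0,x^{!}}\DM^{!}(X_i)$.

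The main obstacle is precisely the coherence packaged by Lemma \ref{lem.spine}: producing the individual purity equivalences is routine, but organizing them into a genuine natural equivalence of functors — rather than a mere family of level-wise equivalences with compatible squares — is what could fail for a general index category and is exactly what forces the hypothesis $\Ical\simeq\NN_0$. A secondary point requiring care is the coherence of the Thom-twist decomposition along the cotangent sequence; I would phrase it through the orientation of $\DM_{\QQ}$ so that all twists reduce to Tate twists by relative dimension, making the underlying additivity $e_{n+1}=e_n+d_n$ of relative dimensions automatic.
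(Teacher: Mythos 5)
Your proposal is correct and follows essentially the same route as the paper's proof: both twist the levelwise identifications $\DM^{!}(X_n)\simeq\DM^{*}(X_n)$ by the Thom motive $\Th(\Omega_{x_{0n}})$ so that relative purity and its compatibility with composition make the squares over each edge $n\to n+1$ commute, and both invoke Lemma \ref{lem.spine} to upgrade the spine-level data to a coherent natural equivalence of $\NN_0$-diagrams before passing to colimits. Your write-up merely makes explicit (via the cotangent sequence and the orientation of $\DM_{\QQ}$) what the paper delegates to the citation of compatibility of purity with composition.
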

\begin{proof}
		By descent, we see that for each $n\in \NN_{0}$ we have $\ptilde_{i}\colon \DM^{!}(X_{i})\simeq \DM^{*}(X_{i})$  as $X_{i}$ admits a smooth covering by a scheme. We will construct equivalences 
	$$
		p_{i}\colon \DM^{!}(X_{i})\simeq \DM^{*}(X_{i})
	$$
	out of $\ptilde_{n}$ that are compatible with the presentation as a colimit. This is easily achieved by twisting with the Thom-motives associated to the smooth maps $x_{0i}$. But let us be more precise.\par
		Let us define $p_{i}\coloneqq \ptilde_{n}\otimes \textup{Th}(\Omega_{x_{0n}})1_{X_{n}}$, where $\textup{Th}$ denotes the Thom-motive\footnote{Let $p\colon V\rightarrow X_{i}$ be a vector bundle with zero section  $s$, then $\textup{Th}(V)\coloneqq p_{\sharp}s_{*}$.} associated to $\VV(\Omega_{x_{0i}})\rightarrow X_{n}$. Then $p_{n}$ is still an equivalence of $\DM^{!}(X_{n})$ and $\DM^{*}(X_{n})$.  By compatibility with the purity equivalence with composition \cite[Rem. 2.4.52]{CD}, the $p_{i}$ are compatible with the presentation of $\DM(x,X)$ as a colimit by Lemma \ref{lem.spine}.
\end{proof}

\begin{rem}
	To generalize Proposition \ref{prop.conj.1} to arbitrary $\Ical$ it is enough to know that the level-wise equivalences $p_{i}$ in the proof yield an equivalence of diagrams. While on the homotopy categorical level this is clear, we were not able to write a map down for higher simplices.
\end{rem}

\begin{proof}[Proof of Theorem \ref{thm.6-ff}]
	The property (PB) is Proposition \ref{prop.ind.pf}. By Proposition \ref{prop.conj.1}, we obtain a functor $f^{!}\colon \DM(y,Y)\rightarrow \DM(x,X)$. Applying Proposition \ref{prop.ind.pf} to $f^{!}$, we obtain a left adjoint $f_{!}$ of  $f^{!}$. Note that we use that each $f_{n}^{!}$ preserves colimits, since $f_{n!}$ preserves compact objects. In particular, (F2) follows from construction. The properties (F3) and (F4) can be checked on compacts, which is clear by design and Lemma \ref{lem.compact.gen}. 
	
%	For the property (F1), we note that if $f$ is proper, then $f_{!}$ admits a left adjoint $\ftilde^{*}$. By Lemma \ref{lem.fsharp}, and Lemma \ref{lem.spine}, we get a map $\colim f_{\bullet}^{*}\rightarrow \ftilde^{*}$. By Lemma \ref{lem.compact.gen}, we see that this morphism is an equivalence. By construction, we obtain $f^{*}\simeq \colim f_{\bullet}^{*}\simeq  \ftilde^{*}$ and in particular $f_{!}\simeq f_{*}$ by uniqueness of adjoints.
	
	The rest follows from Lemma \ref{lem.M}
\end{proof}

\begin{rem}
Let $(x,X)$ be a strict tame pro-algebraic stack and $(y,Y)$ a tame pro-algebraic stack together with a morphism $f\colon (x,X)\to (y,Y)$. Assume that for $y_{n}\colon Y_{n+1}\to Y_{n}$ and $n\geq 0$ the unit $\id\to  y_{n}^{*}y_{n\sharp}$ is an equivalence. In this setting, we can make $f_{!}$ more explicit. 

	For each $n\geq 0$, we obtain a functor $f_{n!}$. We want to construct a morphism of diagrams $f_{\bullet!}\colon \NN_{0}\times \Delta^{1,\op}\rightarrow \DGCat$, which after taking the colimit then yields $f_{!}$. Since our morphisms are not cartesian, we have to take the Thom-twist of $x_{n}\colon X_{n+1}\to X_{n}$ and $y_{n}$ into account. 
	Set $\ftilde_{n!}\coloneqq (\mathrm{Th}(\Omega_{y_{0n}})1_{Y_{n}})^{-1}\otimes f_{n!}\circ \mathrm{Th}(\Omega_{x_{0n}})1_{X_{n}}\otimes -$. Then, we obtain 
	\begin{align*}
		y_{n}^{*}\ftilde_{n!}&\simeq y_{n}^{*}(\mathrm{Th}(\Omega_{y_{0n}})1_{Y_{n}})^{-1}\otimes f_{n!}\circ \mathrm{Th}(\Omega_{x_{0n}})1_{X_{n}}\otimes x_{n!}x_{n}^{!}\\ 
		&\simeq y_{n}^{*}(\mathrm{Th}(\Omega_{y_{0n}})1_{Y_{n}})^{-1}\otimes y_{n!}f_{n+1!}\circ \mathrm{Th}(\Omega_{x_{0n+1}})1_{X_{n+1}}\otimes x_{n}^{*}\\
		&\simeq y_{n}^{*}y_{n\sharp}\circ (\mathrm{Th}(\Omega_{y_{0n+1}})1_{Y_{n+1}})^{-1}\otimes f_{n+1!}\circ \mathrm{Th}(\Omega_{x_{0n+1}})1_{X_{n+1}}\otimes x_{n}^{*}\simeq \ftilde_{n+1!} x_{n}^{*},
	\end{align*}
	where we use compatibility of Thom twists with composition in the sense of \cite[Rem. 2.4.52]{CD} and the projection formula.
	
		In particular, we obtain the desired diagram $f_{\bullet!}\colon \NN_{0}\times \Delta^{1,\op}\rightarrow \DGCat$ that after taking colimits induces a functor $f_{!}\colon \DM(x,X)\rightarrow \DM(y,Y)$. This is a colimit preserving functor and therefore admits a right adjoint. By construction of the comparison morphism in Proposition \ref{prop.conj.1}, this agrees with the construction of $f_{!}$ in the proof of Theorem \ref{thm.6-ff}.
		
		This construction also shows why (F1) fails in this setting. Even when $f$ is proper, the resulting $f_{!}$ is an infinite twist of $f_{*}$. Note however, if $f$ is cartesian, then by design the twists appearing in $\ftilde$ cancel each other out, so this caveat appears only in the non-cartesian setting.
\end{rem}

\subsubsection{Proof of Theorem \ref{thm.6-ff} for cartesian maps}
We now proceed to prove Theorem~\ref{thm.6-ff} in the $(\ast)$ case. Throughout, we use the term \textit{pro-algebraic} in place of \textit{pro-$\Ical$-algebraic}.\par

To that end, we need to extend the six-functor formalism to pro-algebraic stacks. We will freely use the frameworks developed by Gaitsgory–Rozenblyum, Liu–Zheng, and Mann (cf.~\cite{GR}, \cite{LiuZheng}, \cite{Mann}). Our arguments rely on results from the latter, and for details we refer the reader to \cite[\S A.5]{Mann}. We will not recall the main definitions here, and instead use the language of geometric setups and correspondences as developed in \textit{loc.~cit.}\par

Let $(\Ccal, E)$ be a \textit{geometric setup}, meaning an $\infty$-category $\Ccal$ admitting finite limits, together with a class of morphisms $E \subset \text{Mor}(\Ccal)$ that is stable under pullbacks, compositions, and equivalences. Let
$$
\Dcal \colon \Corr{\Ccal}{E} \to \ICat
$$
be a six-functor formalism in the sense of \cite[\S A.5]{Mann}, where $\Corr{\Ccal}{E}$ denotes the $\infty$-category of correspondences as defined in \cite{LiuZheng}.

We denote by
$$
\Dcal_{\Ccal} \colon \Ccal^{\op} \to \CAlg(\ICat)
$$
the restriction of $\Dcal$ along the canonical inclusion $\Ccal^{\op} \hookrightarrow \Corr{\Ccal}{E}$. We assume that $\Dcal_{\Ccal}$ factors through $\CAlg(\PrLO_{\omega})$. We will use notation analogous to that of Section~\ref{sec.pull}.

 We denote by $\Dcal_{\Ical}$ the composition
$$
	\Dcal_{\Ical}\colon \Fun(\Ical,\Ccal^{\op})\xrightarrow{\Dcal_{\Ccal}} \Fun(\Ical,\textup{CAlg}(\PrLO_{\omega}))\xrightarrow{\colim}\textup{CAlg}(\PrLO_{\omega}).
$$ 
\par

  We define $E_{\Ical}$ to be the class of those morphisms $f\colon X\rightarrow Y$ in $\Fun(\Ical,\Ccal^{\op})$ such that $f_{i}$ is in $E$ for all $i\in \Ical$ and for all $i\rightarrow j\in \Ical$ the square
$$
\begin{tikzcd}
	X_{j}\arrow[r,"X_{ij}"]\arrow[d,"f_{j}"]& X_{i}\arrow[d,"f_{i}"]\\
	Y_{j}\arrow[r,"Y_{ij}"]&Y_{i}
\end{tikzcd}
$$
is cartesian.

\begin{prop}
\label{prop.six.ff}
	Assume that $\Ical$ admits an initial object $0$ and $E$ admits a suitable decomposition $J,F$ that satisfies the assumptions of \cite[Prop. A.5.10]{Mann}.
	Then $\Dcal_{\Ical}$ can be upgraded to a $6$-functor formalism
	$$
		\Dcal_{\Ical}\colon \Corr{\Fun(\Ical,\Ccal^{\op})}{E_{\Ical}}\rightarrow \textup{CAlg}(\PrLO_{\omega}).
	$$
	Further, if $f\in E$ is a morphism such that for all $i\in \Ical$ we have $f_{i}\in F$ resp. $f(i)\in J$, then $f_{!}\simeq f_{*}$ resp. $f_{!}$ is left adjoint to $f^{*}$.
\end{prop}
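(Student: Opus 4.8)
The plan is to apply Mann's criterion \cite[Prop.~A.5.10]{Mann} directly to the geometric setup $(\Fun(\Ical,\Ccal^{\op}),E_{\Ical})$ equipped with the symmetric monoidal functor $\Dcal_{\Ical}$, using the decomposition of $E_{\Ical}$ into the classes $J_{\Ical}$ and $F_{\Ical}$ of cartesian morphisms that are levelwise in $J$ resp.\ in $F$. The whole point is that all hypotheses of \emph{loc.\ cit.}\ for this new setup reduce, via compact generation, to the corresponding hypotheses that $(J,F)$ already satisfies on $\Ccal$. First I would record that $J_{\Ical}$ and $F_{\Ical}$ contain the equivalences and are stable under composition and under base change in $\Fun(\Ical,\Ccal^{\op})$; this is immediate from the levelwise statements for $J$ and $F$ together with the fact that a composite (resp.\ a pullback) of cartesian squares is again cartesian.

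The key construction is the factorization, and this is where the initial object is essential. Given $f\colon X\rightarrow Y$ in $E_{\Ical}$, I would factor only the initial level: write $f_{0}=p_{0}\circ j_{0}$ with $j_{0}\in J$ and $p_{0}\in F$, with intermediate object $W_{0}$. Since $0$ is initial, every $Y_{i}$ carries a transition map $Y_{i}\rightarrow Y_{0}$, so I may set $W_{i}\coloneqq Y_{i}\times_{Y_{0}}W_{0}$ and obtain a pro-object $W\in\Fun(\Ical,\Ccal^{\op})$. Because $f$ is cartesian we have $X_{i}\simeq Y_{i}\times_{Y_{0}}X_{0}\simeq W_{i}\times_{W_{0}}X_{0}$, so $f=p\circ j$ with $j\colon X\rightarrow W$ and $p\colon W\rightarrow Y$ obtained by base change; by stability of $J$ and $F$ under pullback, $j$ is levelwise in $J$ and $p$ levelwise in $F$, and both are cartesian, i.e.\ $j\in J_{\Ical}$ and $p\in F_{\Ical}$. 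Thus the (coherent) choice of a factorization of the whole pro-system is reduced to a single factorization at level $0$ propagated by pullback.

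It remains to verify that $\Dcal_{\Ical}$ restricts to a pullback formalism along $J_{\Ical}$ and to a ``pushforward formalism'' along $F_{\Ical}$, together with the mixed Beck--Chevalley compatibility required by \emph{loc.\ cit.} The $J_{\Ical}$-part is precisely Proposition~\ref{prop.ind.pf}: cartesian morphisms are $*$-adjointable by Example~\ref{ex.adjointable.mor}~(1), so $j^{*}$ admits a left adjoint $j_{\sharp}$ by Lemma~\ref{lem.Gai.colim.2}, computed levelwise by Lemma~\ref{lem.fsharp}, and base change and the projection formula hold because they may be tested on compact objects, each of which is inserted from a finite level $i\in\Ical$ (Lemma~\ref{lem.compact.gen}), where the statements hold in $\Dcal$. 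For the $F_{\Ical}$-part it is cleanest to pass to the limit presentation $\Dcal_{\Ical}(X)\simeq\lim_{i\in\Ical,x_{*}}\Dcal(X_{i})$ of Remark~\ref{rem.Gai.colim}: the $*$-adjointability of the (cartesian) squares of $p$ shows that $p^{*}$ is computed levelwise, and its right adjoint $p_{*}$ is then the levelwise pushforward $(N_{i})_{i}\mapsto(p_{i*}N_{i})_{i}$, which is well defined since the functors $p_{i*}$ automatically commute with the $*$-transition functors $(X_{ij})_{*}$. Consequently proper base change and the projection formula for $p_{*}$ follow from the levelwise case, and the mixed compatibility between $j_{\sharp}$ and $p_{*}$ reduces, on compacts, to the corresponding square in $\Dcal$.

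With all hypotheses of \cite[Prop.~A.5.10]{Mann} verified, that proposition upgrades $\Dcal_{\Ical}$ to the asserted six-functor formalism on $\Corr{\Fun(\Ical,\Ccal^{\op})}{E_{\Ical}}$. The final clause is then formal: on $F_{\Ical}$ the functor $f_{!}$ is by construction $f_{*}$, and on $J_{\Ical}$ it is $f_{\sharp}$, which is left adjoint to $f^{*}$. The main obstacle I anticipate is not any single computation but the coherent bookkeeping needed to match Mann's abstract input data at the level of $\infty$-categories — in particular realizing the factorization of the second paragraph as a functor rather than an objectwise choice, and checking the adjointability inputs coherently. The cartesian hypothesis on $E_{\Ical}$ is exactly what makes every relevant square both $*$- and $!$-adjointable (Example~\ref{ex.adjointable.mor}~(1)), so that these reductions to the levelwise theory go through.
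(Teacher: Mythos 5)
Your proposal is correct and follows essentially the same route as the paper: define $J_{\Ical},F_{\Ical}$ levelwise, use the initial object together with cartesianness to propagate a factorization of $f_{0}$ by pullback, invoke Proposition~\ref{prop.ind.pf} for the $J_{\Ical}$-part, check the remaining compatibilities on compact objects via Lemma~\ref{lem.compact.gen}, and feed everything into \cite[Prop.~A.5.10]{Mann}. You merely spell out more explicitly (the level-$0$ factorization and the levelwise description of $p_{*}$) what the paper's proof states in passing.
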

\begin{proof}
	We define $J_{\Ical}$ resp. $F_{\Ical}$ to be the class of those morphisms $f$ in $E_{\Ical}$ such that $f(i)$ is in $J$ resp. $F$ for all $i\in \Ical$. Then we claim that  $J_{\Ical},F_{\Ical}$ is a suitable decomposition of $E$.\par 
	Indeed, since for a map $f$ in $E$ all squares of the associated diagram are pullback squares. Thus, all of the properties follow immediately (note that we need that $\Ical$ admits an initial object for the decomposition property).\par
	We will use Proposition \ref{prop.ind.pf} and the criterion in \cite[Proposition. A.5.10]{Mann} to prove this proposition.\par
	By Proposition \ref{prop.ind.pf} the functor $\Dcal_{\Ical}$ defines a pullback formalism on $\Fun(\Ical,\Ccal^{\op})$ with respect to $J_{\Ical}$.\par 
	By construction, the pullback $f^{*}$ along any morphism $f\in \Fun(\Ical,\Ccal^{\op})$ admits a right adjoint. We need to check that for morphisms in $F_{\Ical}$ we have base change and projection formula. But similar to the proof of Proposition \ref{prop.ind.pf}, we can use that any compact object comes from some compact on the $i$-th level, for some $i\in \Ical$ (by \textit{mutas mutandis} of Lemma \ref{lem.compact.gen}). This argument shows that for any pullback square
	$$
		\begin{tikzcd}
			W\arrow[r,"f'"]\arrow[d,"g'"]& X\arrow[d,"g"]\\
			Y\arrow[r,"f"]&Z
		\end{tikzcd}
	$$
	with $g\in J_{\Ical}$ and $f\in F_{\Ical}$ we have $g_{\sharp}f'_{*}\simeq f_{*}g'_{\sharp}$ completing the proof.
\end{proof}

The above proposition a priori only applies if the geometric setup has a suitable decomposition. For schemes this is usually not a problem as in practice we can decompose separated morphisms via compactifications into open immersions and proper maps. For algebraic stacks this only holds locally. Nevertheless, the existence of the $!$-adjunction extends to cartesian morphisms of pro-$\Ical$-algebraic stacks by the base change formula, as seen below.

%\begin{conjecture}
%	\label{conj.1}
%	Let $f\colon (x,X)$ be a monoidal pro-$\Ical$-algebraic stack, then 
%	$$
%		\DM(x,X)\simeq \colim_{i\in\Ical,x^{!}}\DM^{!}(X_{i}).
%	$$
%\end{conjecture}
%
%\begin{rem}
%	To prove Conjecture \ref{conj.1}, one only needs that the purity morphism is functorial in the $\infty$-categorical sense. To be more precise, one needs purity to be a $1$-morphism of the functors $f^{*},f^{!}\colon (\Sch_{S}^{\textup{lft}})^{\op}\rightarrow \DGCat$. We think, that this is true following for example the construction of Zavyalov (cf. \cite{Zav}) but we did not find a written proof.
%\end{rem}

\begin{prop}
\label{prop.cart.ext}
Let $f\colon X\rightarrow Y$ be a morphism inside $E_{\Ical}\subseteq \Fun(\Ical^{\op},\Ccal)^{\op}$. Then there exists an adjunction 
 $$
  \begin{tikzcd}
 	f_{!}\colon \Dcal_{\Ical}(X)\arrow[r,"",shift left = 0.3em]&\arrow[l,"",shift left = 0.3em]\Dcal_{\Ical}(Y)\colon f^{!}
 \end{tikzcd}
 $$
satisfying base change and projection formula with respect to $f^{*}\coloneqq\Dcal_{\Ical}(f)$.
\end{prop}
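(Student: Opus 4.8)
The plan is to construct $f_{!}$ as the filtered colimit over $\Ical$ of the level-wise exceptional pushforwards $f_{i!}$, which already exist because each $f_{i}\in E$ and $\Dcal$ is a six-functor formalism on $(\Ccal,E)$. The substance of the statement is that these assemble into a coherent morphism of $\Ical$-diagrams, and the mechanism that makes them cohere is $!$-base change along the cartesian transition squares. Indeed, for each $i\to j$ in $\Ical$ the square defining $f$ is cartesian, so base change for $\Dcal$ gives an equivalence $y_{ij}^{*}f_{i!}\simeq f_{j!}x_{ij}^{*}$. This says precisely that $f_{i!}$ is natural with respect to the $*$-pullback transition maps along which $\Dcal_{\Ical}(X)=\colim_{i}\Dcal(X_{i})$ and $\Dcal_{\Ical}(Y)=\colim_{i}\Dcal(Y_{i})$ are formed.

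To upgrade this level-wise compatibility to a homotopy-coherent morphism of diagrams I would work inside the correspondence category rather than use Lemma~\ref{lem.spine}, since $\Ical$ is an arbitrary filtered $\infty$-category. The cartesian transformation $f$, whose legs lie in $E$, should be encoded as a functor $G\colon\Ical\times\Delta^{1}\to\Corr{\Ccal}{E}$: on $\Ical\times\{0\}$ and $\Ical\times\{1\}$ it is $x$ resp.\ $y$ viewed through $\Ccal^{\op}\hookrightarrow\Corr{\Ccal}{E}$, so that the transitions are the $*$-spans $X_{i}\xleftarrow{x_{ij}}X_{j}\xrightarrow{\id}X_{j}$, while the edge $\{i\}\times\Delta^{1}$ is the $!$-span $X_{i}\xleftarrow{\id}X_{i}\xrightarrow{f_{i}}Y_{i}$. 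The only nonformal coherence is that the square attached to $i\to j$ commutes in $\Corr{\Ccal}{E}$; the two horizontal composites are the span $X_{i}\xleftarrow{x_{ij}}X_{j}\xrightarrow{f_{j}}Y_{j}$ and the span with apex $X_{i}\times_{Y_{i}}Y_{j}$, and these agree precisely because the square is cartesian, the pullback computing the composite of spans being the cartesian square itself. Postcomposing $G$ with $\Dcal$ and taking the colimit over $\Ical$ in $\Pr^{L}_{\omega}$ then defines $f_{!}\coloneqq\colim_{i}f_{i!}\colon\Dcal_{\Ical}(X)\to\Dcal_{\Ical}(Y)$; since $\Ical$ is filtered this colimit is compatible with the symmetric monoidal colimit defining $\Dcal_{\Ical}$.

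Each $f_{i!}$ is a left adjoint in the formalism $\Dcal$ (its right adjoint is $f_{i}^{!}$), hence preserves colimits, and therefore so does the induced functor $f_{!}$ on the colimit categories. As $\Dcal_{\Ical}(X)$ and $\Dcal_{\Ical}(Y)$ are presentable, $f_{!}$ admits a right adjoint $f^{!}$, yielding the asserted adjunction $f_{!}\dashv f^{!}$. For base change and the projection formula with respect to $f^{*}=\Dcal_{\Ical}(f)$ I would note that both are equivalences between colimit-preserving functors, so it suffices to test them on compact generators; by Lemma~\ref{lem.compact.gen} every compact object of $\Dcal_{\Ical}(X)$ is the image under $\ins_{i}$ of a compact object at some level $i\in\Ical$. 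Using that the $\ins_{i}$ are symmetric monoidal and intertwine $f_{!}$ and $f^{*}$ with $f_{i!}$ and $f_{i}^{*}$ (by construction, as in Lemma~\ref{lem.fsharp}), these identities reduce to the base change and projection formula of $\Dcal$ for the morphism $f_{i}$, which hold since $f_{i}\in E$.

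The main obstacle is the coherent construction of $G\colon\Ical\times\Delta^{1}\to\Corr{\Ccal}{E}$, that is, recording the $f_{i!}$ together with all higher compatibilities with the transition maps over an arbitrary filtered $\Ical$. The cartesian hypothesis is exactly what unlocks this: it is the condition under which the horizontal composites of transition-spans and $f$-spans are computed by the given squares, so that no base-change coherence beyond what is already packaged in $\Dcal\colon\Corr{\Ccal}{E}\to\ICat$ is required, and one may invoke the machinery of extending diagrams to correspondences (\cite{LiuZheng}, \cite{GR}, \cite{Mann}). This also explains why, in contrast to the $(\dagger)$ case, no Thom twists and no restriction $\Ical\simeq\NN_{0}$ enter here.
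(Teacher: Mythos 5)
Your proposal is correct and follows essentially the same route as the paper: encode $f$ as a diagram $\Ical\times\Delta^{1,\op}\to\Corr{\Ccal}{E}$ whose transition edges are $*$-spans and whose $f$-edges are $!$-spans, observe that the squares commute in the correspondence category precisely because the composite spans are computed by the given cartesian squares, take the colimit of $\Dcal$ applied to this diagram to get a colimit-preserving $f_{!}$, and verify base change and the projection formula on compact objects via Lemma~\ref{lem.compact.gen}. The one step you defer to "the machinery of extending diagrams to correspondences" is exactly where the paper does its real work, constructing the coherent map $\phi\colon\Ical\times\Delta^{1,\op}\to\Corr{\Ccal}{E}$ explicitly through a bisimplicial-set model of the correspondence category applied to the marked simplicial sets $(\Ccal,E,\textup{ALL})$ and $(\Ical^{\op}\times\Delta^{1,\op},\textup{ALL},\textup{ALL})$.
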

\begin{proof}
First, let us note that the morphism $f$ corresponds to a diagram $\Ical^{\op}\times \Delta^{1,\op}\rightarrow \Ccal$. 
The idea of the proof is to take $f_{!}\coloneqq\colim_{\Ical} f_{!\bullet}$. However, as $\Ical$ is arbitrary, we do not immediately obtain a a diagram $f_{!\bullet}\colon \Ical\times \Delta^{1,\op}\rightarrow \DGCat$ as in the proof of the $(\ast)$ case. Instead, we will make use of the fact that $f$ is cartesian and we will construct a diagram $\Ical\times \Delta^{1,\op}\rightarrow \Corr{\Ccal}{E}$. Taking the colimit of this diagram will then yield $f_{!}$.
To make this process homotopy coherent, we will use the description\footnote{We were made aware of this alternative description by Chirantan Chowdhury.} of $\Corr{\Ccal}{E}$ in terms of bisimplicial sets. 

Let us consider the marked simplicial set $(\Ccal,E,\textup{ALL})$, where $E$ is as before and $\textup{ALL}$ denotes the class of all edges in $\Ccal$. By the assignment $([n_{1}],[n_{2}])\mapsto \Hom_{\mathrm{Set}_{\Delta}}(\Delta^{n_{1}}\times\Delta^{n_{2}},\Ccal)$ we obtain a bisimplicial set $\delta_{*}\Ccal$. We denote by $\Ccal^{\mathrm{cart}}_{E}$ the bisimplicial subset such that the vertical edges are contained in $E$ and each square is cartesian (for a rigorous construction see \cite{chowd}). By a similar construction, we obtain a bisimplicial set  $(\Ical^{\op}\times\Delta^{1,\op})^{\textup{ALL}}_{\textup{ALL}}$ associated to the marked simplicial set $(\Ical^{\op}\times\Delta^{1,\op},\textup{ALL},\textup{ALL})$. This process is functorial in marked simplicial sets with marked squares, so we obtain a map $\alpha\colon (\Ical^{\op}\times\Delta^{1,\op})^{\textup{ALL}}_{\textup{ALL}}\rightarrow \Ccal^{\mathrm{cart}}_{E}$. We take opposites of the horizontal arrows, which we see as a functor $\textup{op}^{2}$ and apply this to $\alpha$. Finally, we let $K\mathrm{pt}^{n}$ denote the bisimplicial subset of $\Delta^{(n,n)}$ spanned by edges $(k,l)$ for $0\leq k\leq l\leq n$. Then we define the simplicial set by 
$$
\delta^{*}_{2\nabla}\op^{2}\Ccal^{\mathrm{cart}}_{E}\colon n\mapsto \Hom_{\mathrm{Set}_{2\Delta}}(K\mathrm{pt}^{n},\op^{2}\Ccal^{\mathrm{cart}}_{E}).
$$
By functoriality of the construction, we obtain a diagram 
$$
	\agtilde\colon \delta^{*}_{2\nabla}\op^{2}(\Ical^{\op}\times\Delta^{1,\op})^{\textup{ALL}}_{\textup{ALL}}\rightarrow \delta_{2\nabla}^{*}\op^{2}\Ccal^{\mathrm{cart}}_{E}.
$$
It is not hard to see from the construction that $\delta^{*}_{2\nabla}\op^{2}\Ccal^{\mathrm{cart}}_{E}\cong \Corr{\Ccal}{E}$ as simplicial sets. The simplicial set $ \delta^{*}_{2\nabla}\op^{2}(\Ical^{\op}\times\Delta^{1,\op})^{\textup{ALL}}_{\textup{ALL}}$ is very explicit and we can construct a map of simplicial sets $\Ical\times\Delta^{1,\op}\rightarrow \delta^{*}_{2\nabla}\op^{2}(\Ical^{\op}\times\Delta^{1,\op})^{\textup{ALL}}_{\textup{ALL}}$ in the following way. 

Note that we have a map
$$
\Hom_{\mathrm{Set}_{\Delta}}(\Delta^{n}\times\Delta^{n,\op},\Ical^{\op}\times\Delta^{1,\op})\rightarrow (\delta^{*}_{2\nabla}\op^{2}(\Ical^{\op}\times\Delta^{1,\op})^{\textup{ALL}}_{\textup{ALL}})_{n}.
$$
Let $\Delta^{n}\to \Ical\times\Delta^{1,\op}$ be a map of simplicial sets. This map is uniquely determined by projections to $\Ical$ and $\Delta^{1,\op}$. Let us denote these maps by $p_{1}\colon\Delta^{n}\to\Ical$ and $p_{2}\colon \Delta^{n}\to  \Delta^{1,\op}$. From these two maps, we get a  map $\Delta^{n}\times\Delta^{n,\op}\to\Ical^{\op}\times\Delta^{1,\op}$ of simplicial sets via 
\begin{align*}
	\varphi_{1}&\colon \Delta^{n}\times\Delta^{n,\op}\xrightarrow{\mathrm{pr}_{2}} \Delta^{n,\op}\xrightarrow{p_{1}^{\op}}\Ical^{\op},\\
	\varphi_{2}&\colon \Delta^{n}\times\Delta^{n,\op}\xrightarrow{\mathrm{pr}_{1}} \Delta^{n}\xrightarrow{p_{2}}\Delta^{1,\op},
\end{align*}
where $\mathrm{pr_{i}}$ denotes the $i$-th projection. In particular, we thus obtain a morphism of simplicial sets 
$$
	\phi\colon \Ical\times\Delta^{1,\op}\rightarrow \delta^{*}_{2\nabla}\op^{2}(\Ical^{\op}\times\Delta^{1,\op})^{\textup{ALL}}_{\textup{ALL}}\rightarrow \Corr{\Ccal}{E}.
$$

Let $\widetilde{\Dcal}$ denote the composition 
$$
	\Fun(\Ical,\Corr{\Ccal}{E})^{\op}\xrightarrow{(\Dcal\circ -)^{\op}} \Fun(\Ical,\textup{CAlg}(\PrLO_{\omega}))^{\op}\xrightarrow{\colim}\textup{CAlg}(\PrLO_{\omega}).
$$
By design, we have $\widetilde{\Dcal}(X) \simeq \Dcal_{\Ical}(X)$, where we view $X$ as an objet of the left hand side via the map $\Fun(\Ical,\Ccal^{\op})^{\op}\to\Fun(\Ical,\Corr{\Ccal}{E})^{\op}$. In particular, by applying $\widetilde{\Dcal}$ to $\phi$, which we view as an edge in $\Fun(\Ical,\Corr{\Ccal}{E})^{\op}$, we obtain a colimit preserving map that preserves compact objects 
$$
	f_{!}\coloneqq \widetilde{\Dcal}(\phi)\colon \Dcal_{\Ical}(X)\rightarrow \Dcal_{\Ical}(Y). 
$$
The remaining assertions follow by Lemma \ref{lem.compact.gen} similar to the proof in the $(\dagger)$ case.
\end{proof}

\begin{rem}
	In the proof of Proposition~\ref{prop.cart.ext}, we can apply the same method to obtain a diagram $\Ical \times \Delta^{1} \to \Corr{\Ccal}{E}$ 
by working with the projection $	\Delta^{n} \times \Delta^{n,\op} \to \Delta^{n,\op}.$ The colimit along this map then yields the pullback functor $f^{*}$.
\end{rem}

\begin{rem}
	As is evident from the proof of Proposition~\ref{prop.cart.ext}, if $\Dcal^{!}_{|\Ccal_{E}}$ commutes with colimits, then so does 
	$$
	f^{!} \coloneqq \Dcal_{\Ical}^{!}(f)
	$$ 
	for all $f \in E_{\Ical}$. This follows from the fact that $\Dcal_{\Ical}(X)$ is compactly generated for all $X \in \Fun(\Ical, \Ccal)$, by Lemma~\ref{lem.compact.gen}.
\end{rem}

\begin{proof}[Proof of Theorem \ref{thm.6-ff}]
	This is now just a consequence of Proposition \ref{prop.six.ff}, Proposition \ref{prop.cart.ext} and Lemma \ref{lem.M}. Note that if $f$ is a proper cartesian morphism of pro-$\Ical$-algebraic stacks, then the construction of $f_{!}$ in Proposition \ref{prop.six.ff} shows that $f_{!}\simeq f_{*}$, using that on compacts they agree by Lemma \ref{lem.compact.gen}.
\end{proof}

\begin{rem}
\label{rem.comp.of.mot}
	Let $f\colon (x,X)\rightarrow (y,Y)$ be a smooth morphism pro-$\Ical$-algebraic stacks. For $M\in\DM(y,Y)$, we can compute the "motivic global sections with values in $M$" directly. To be more precise, let us note that since $f_{*}f^{*}$ is colimit preserving, its values are determined by its restriction to compacts. As any compact in $\DM(y,Y)$ comes from a compact in some $\DM(Y_{i})$ (cf. Lemma \ref{lem.compact.gen}), we see that
	$$
		f_{*}f^{*}M\simeq \colim_{\ins_{i}M_{i}\rightarrow M} \colim_{i\rightarrow k, j\rightarrow k}\ins_{i}^{Y} f_{i*}f^{*}_{i}y_{ik*}y_{ij}^{*}M_{i},
	$$
	where the colimit is over all $i\in \Ical$ and all compacts $M_{i}\in \DM(Y_{i})^{c}$. A similar formula can also be given for $f_{!}f^{!}$.\par
	In particular, if for example $(y,Y)\cong S$, and $(x,X)$ is strict, we have 
	$$
		f_{*}f^{*}1_{S}\simeq f_{0*}f_{0}^{*}1_{S}
	$$
	as expected (cf. Remark \ref{rem.underlying.motive}).
\end{rem}

\begin{rem}[Non-finite type base change]
\label{rem.inf.bc}
Let $f\colon (x,X)\rightarrow (y,Y)$ be a smooth adjointable morphism pro-algebraic stacks. Assume that $f$ is either $\Ical\simeq\NN_{0}$ or that $f$ is cartesian and $\Ical$ has an initial object $0$.\par 
Our computations have shown that we have $f_{*}\ins_{0}^{X}\simeq \ins_{0}^{Y} f_{0*}$, where $\ins_{0}^{X}$ denotes the natural functor $\DM(X_{0})\rightarrow \DM(x,X)$ (similarly for $Y$). This can be seen as a form of non-finite type base change equivalence of the square
$$
	\begin{tikzcd}
		(x,X)\arrow[r,""]\arrow[d,""]& X_{0}\arrow[d,""]\\
		(y,Y)\arrow[r,""]&Y_{0},
	\end{tikzcd}
$$
which is cartesian if $f$ is cartesian. However, note that as longs as $(x,X)\rightarrow X_{0}$ and $(y,Y)\rightarrow Y_{0}$ are not adjointable, we cannot use the base change of Theorem \ref{thm.6-ff}.
\end{rem}

\section{Motivic cohomology of pro-algebraic stacks}

In this section, we want to define and highlight some properties of motivic cohomology in our setting.\par
In the following, we fix a morphism $f\colon (x,X)\rightarrow (y,Y)$ of pro-$\Ical$-algebraic stacks over $S$. Let $h\colon (x,X)\rightarrow S$ and $g\colon (y,Y)\rightarrow S$ be the morphism induced by the structure morphisms. We further assume that either $f$ is cartesian or that $\Ical\simeq \NN$ and $f$ is adjointable.\par 
\par 
By Theorem \ref{thm.6-ff}, we have adjunctions $f^{*}\dashv f_{*}$ and $f_{!}\dashv f^{!}$ between $\DM(x,X)$ and $\DM(y,Y)$ satisfying all the properties stated in the theorem. Note that even though $(x,X)$ and $(y,Y)$ are not assumed to be strict, we still have adjunctions $h^{*}\dashv h_{*}$ and $g^{*}\dashv g_{*}$ by Lemma \ref{lem.Gai.colim.2}.

\begin{defi}
\label{def.mot.coh}
	Let $f\colon (x,X)\rightarrow (y,Y)$ and $g\colon (y,Y)\rightarrow S$ be as above. Then we define the \textit{relative motivic cohomology of $(x,X)$ with coefficients in $M\in \DM(y,Y)$} as 
	$$
		R\Gamma(X,M)\coloneqq \Homline_{\Dcal(\QQ)}(f_{!}f^{!}1_{(y,Y)},M)\in \DGCat. 
	$$\par 
	Further, we define for $n,m\in \ZZ$ the \textit{absolute motivic cohomology of $(y,Y)$ in degree $(n,m)$} as the $\QQ$-vector space 
	$$
		H^{n,m}(Y,\QQ) \coloneqq \pi_{0}\Hom_{\DM(y,Y)}(1_{(y,Y)},1_{(y,Y)}(n)[m]).
	$$
\end{defi}

\begin{rem}
\label{rem.comp.coh}
	Let us remark that the absolute motivic cohomology of $(y,Y)$ can also be computed relative to $S$ in the following sense
	$$
		H^{n,m}(Y,\QQ)\cong  \Hom_{h\DM(S)}(1_{S},g_{*}g^{*}1_{S}(n)[m]).
	$$
	Further, if $(y,Y)$ is strict and $(y,Y)\rightarrow S$ is a smooth, then 
	$$
		H^{n,m}(Y,\QQ) \cong \pi_{m}\Hom_{\DM(S)}(g_{\sharp}g^{*}1_{S},1_{S}(n)).
	$$
\end{rem}

\begin{assumption}
From now on let us assume that $\Ical$ admits an initial object $0\in \Ical$
\end{assumption}

\begin{lem}
	\label{lem.coh.colim}
	Under the notation above, we have
	$$
		g_{*}1_{(y,Y)}\simeq \colim_{j\in \Ical}g_{j*}1_{Y_{j}}.
	$$
\end{lem}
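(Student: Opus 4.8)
The plan is to reduce the statement to the explicit formula for the projection--insertion composite in Lemma~\ref{lem.Gai.colim}(2). Three reductions set this up. First, since the target of $g$ is the constant system $S$ and $0\in\Ical$ is initial, the pullback $g^{*}$ is compatible with the insertions, so $g^{*}\simeq\ins_{0}^{Y}\circ g_{0}^{*}$ as functors in $\Pr^{L}$. Passing to right adjoints under $\Pr^{L}\simeq(\Pr^{R})^{\op}$ (right adjoints being unique) gives $g_{*}\simeq g_{0*}\circ p_{0}^{Y}$, where $p_{0}^{Y}$ is the projection right adjoint to $\ins_{0}^{Y}$. Second, $\ins_{0}^{Y}$ is symmetric monoidal, so $1_{(y,Y)}\simeq\ins_{0}^{Y}1_{Y_{0}}$. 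Combining these, $g_{*}1_{(y,Y)}\simeq g_{0*}\,p_{0}^{Y}\ins_{0}^{Y}1_{Y_{0}}$.

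Next I would compute $p_{0}^{Y}\ins_{0}^{Y}$. Applying Lemma~\ref{lem.Gai.colim}(2) with $i_{0}=i_{1}=0$, and using that $0$ is initial so that the indexing by pairs of maps $0\to j$ collapses to $j\in\Ical$, one obtains an equivalence of endofunctors of $\DM(Y_{0})$,
$$
	p_{0}^{Y}\ins_{0}^{Y}\simeq\colim_{j\in\Ical} y_{0j*}\,y_{0j}^{*}.
$$
Evaluating at the unit and using $y_{0j}^{*}1_{Y_{0}}\simeq 1_{Y_{j}}$ yields $p_{0}^{Y}\ins_{0}^{Y}1_{Y_{0}}\simeq\colim_{j\in\Ical} y_{0j*}1_{Y_{j}}$, a filtered colimit taken in $\DM(Y_{0})$.

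Finally I would apply $g_{0*}$ and interchange it with the colimit:
$$
	g_{*}1_{(y,Y)}\simeq g_{0*}\colim_{j\in\Ical} y_{0j*}1_{Y_{j}}\simeq\colim_{j\in\Ical} g_{0*}y_{0j*}1_{Y_{j}}\simeq\colim_{j\in\Ical} g_{j*}1_{Y_{j}},
$$
the last equivalence using $g_{0}\circ y_{0j}=g_{j}$, hence $g_{0*}y_{0j*}\simeq g_{j*}$. The one substantive point, which I expect to be the main obstacle, is the middle equivalence: $g_{0*}$ must commute with this filtered colimit. This holds because $g_{0}^{*}$ preserves compact objects (Example~\ref{ex.pf.1}), so its right adjoint $g_{0*}$ preserves filtered colimits. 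Alternatively, one can bypass the adjoint identity $g_{*}\simeq g_{0*}p_{0}^{Y}$ and test against a compact generator $N\in\DM(S)$, using the filtered-colimit description of mapping spaces from Lemma~\ref{lem.compact.gen}: one has $\Hom(N,g_{*}1_{(y,Y)})\simeq\Hom_{\DM(y,Y)}(\ins_{0}^{Y}g_{0}^{*}N,\ins_{0}^{Y}1_{Y_{0}})\simeq\colim_{j}\Hom_{\DM(Y_{j})}(g_{j}^{*}N,1_{Y_{j}})\simeq\colim_{j}\Hom(N,g_{j*}1_{Y_{j}})$, and since $N$ is compact the colimit passes outside $\Hom(N,-)$, so compact generation of $\DM(S)$ concludes via Yoneda.
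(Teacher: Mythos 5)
Your proof follows the paper's argument essentially verbatim: factor $g^{*}\simeq\ins_{0}^{Y}g_{0}^{*}$, pass to right adjoints, identify $p_{0}^{Y}\gamma_{0}^{Y}$ via Lemma~\ref{lem.Gai.colim}(2) as $\colim_{j}y_{0j*}y_{0j}^{*}$, and evaluate at the unit. The only difference is that you explicitly justify commuting $g_{0*}$ past the filtered colimit (via preservation of compact objects by $g_{0}^{*}$), a step the paper leaves implicit, which is a welcome addition rather than a divergence.
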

\begin{proof}
	By construction, we have $g^{*}\simeq \ins_{0}^{Y}g^{*}_{0}$. Using adjunctions this shows that 
	$$
		g_{*}\simeq g_{0*}p^{Y}_{0}h_{Y}.
	$$
	By construction, we have $\ins^{Y}_{0}1_{Y_{0}}\simeq 1_{(y,Y)}$. Thus, 
	\begin{align*}
		g_{*}1_{(y,Y)}\simeq g_{0*}p^{Y}_{0}\gamma_{0}^{Y}1_{Y_{0}}
		&\simeq\colim_{j\in \Ical}g_{0*}y_{0j*}y^{*}_{0j}1_{Y_{0}}\\
		&\simeq \colim_{j\in \Ical}g_{j*}1_{Y_{j}}.
	\end{align*}
\end{proof}

\begin{rem}
\label{rem.coh}
	Lemma \ref{lem.coh.colim} immediately shows that for $S$ regular and $(y,Y)\rightarrow S$ smooth, we have 
	$$
		H^{n,m}(Y,\QQ)\cong \colim_{i\in \Ical} A^{n}(Y_{i},2n-m)_{\QQ}.
	$$
\end{rem}
We will see below, that if $(y,Y)$ is classical and $Y\rightarrow S$ is locally of finite type, then this colimit presentation of the absolute motivic cohomology of $Y$ agrees with the classical motivic cohomology of $Y$. In this setting, we can also look at the functor induced by the usual $*$-pushforward on Artin-stacks $\DM(y,Y)\simeq \DM(Y)\rightarrow \DM(S)$. This agrees by construction with $g_{*}\colon \DM(y,Y)\rightarrow \DM(S)$.

\begin{prop}
\label{prop.classical.coh}
	Assume $(y,Y)$ is classical such that $g\colon Y\rightarrow S$ is locally of finite type. Then 
	$$
		H^{n,m}(Y,\QQ)\simeq \Hom_{h\DM(Y)}(1_{Y},1_{Y}(n)[m]).
	$$
\end{prop}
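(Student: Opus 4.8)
The plan is to reduce the statement to a comparison of mapping objects inside a single DG-category, by invoking the colimit presentation of Proposition~\ref{prop.pres.colim}. First I would use the hypotheses directly: since $(y,Y)$ is classical and $\Ical$ admits an initial object (as assumed from the preceding \textbf{Assumption}), Proposition~\ref{prop.pres.colim} provides an equivalence of DG-categories $\DM(y,Y)\simeq\DM(Y)$. The crucial feature, already recorded in the proof of Proposition~\ref{prop.pres.colim}, is that this comparison map is \emph{symmetric monoidal} and colimit-preserving. Consequently it carries the monoidal unit $1_{(y,Y)}$ to the unit $1_{Y}$, and it is compatible with the Tate twist $(n)$ and the shift $[m]$, since both are built from the monoidal structure together with the stable suspension, all of which are preserved by a monoidal equivalence of stable categories.

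Applying this equivalence to the mapping objects of Definition~\ref{def.mot.coh} then yields an equivalence of mapping spaces
$$
\Hom_{\DM(y,Y)}(1_{(y,Y)}, 1_{(y,Y)}(n)[m]) \simeq \Hom_{\DM(Y)}(1_{Y}, 1_{Y}(n)[m]).
$$
Taking $\pi_{0}$ of both sides and unwinding the definitions, the left-hand side is by construction $H^{n,m}(Y,\QQ)$, while the right-hand side is $\Hom_{h\DM(Y)}(1_{Y},1_{Y}(n)[m])$, since morphisms in the homotopy category $h\DM(Y)$ are precisely $\pi_{0}$ of the mapping space. This produces the asserted isomorphism, which is at this stage purely formal once the monoidal equivalence of categories is in hand.

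The one point requiring genuine care, and the main conceptual obstacle, is the identification of the target $\DM(Y)$ occurring in Proposition~\ref{prop.pres.colim} — a priori the right Kan extended value $\DM^{*}(Y)$ on the prestack $Y=\lim y$ — with the \emph{classical} motivic category of $Y$ viewed as an algebraic stack, so that the right-hand side is honest motivic cohomology. Here I would invoke the hypothesis that $g\colon Y\to S$ is locally of finite type: together with a standard limit argument for the affine (classical) transition maps, this guarantees that $Y$ is representable by an algebraic stack locally of finite type over $S$, whence the gluing of $\DM^{*}$ along a smooth atlas via h-descent (recalled at the start of the section on motives on pro-algebraic stacks) identifies $\DM^{*}(Y)$ with the genuine motivic DG-category of $Y$. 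Under this identification the right-hand side becomes the classical motivic cohomology of $Y$, completing the argument; without the finite type assumption the colimit presentation would still compute $\DM(y,Y)$, but the comparison to a geometrically meaningful category would be lost.
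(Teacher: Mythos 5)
Your argument is correct, but it takes a genuinely different route from the paper's. You reduce everything to the categorical equivalence $\DM(y,Y)\simeq\DM(Y)$ of Proposition~\ref{prop.pres.colim} (whose hypotheses --- classical, $\Ical$ with initial object --- are indeed in force here) and then transport mapping spaces through a symmetric monoidal equivalence; the only extra input is the identification of the Kan-extended $\DM^{*}(Y)$ with the genuine motivic category of the Artin stack $Y$, which is exactly where the locally-of-finite-type hypothesis enters, as you correctly isolate. The paper instead argues at the level of motives and Hom-groups: it uses Remark~\ref{rem.comp.coh} and Lemma~\ref{lem.coh.colim} to write $g_{*}1_{(y,Y)}\simeq\colim_{j}g_{j*}1_{Y_{j}}$, passes to the \v{C}ech nerve of a smooth atlas $\Ytilde_{0}\to Y_{0}$ and its base changes, and then invokes continuity of $\DM$ on schemes \cite[Thm.~14.3.1]{CD} together with descent to identify the resulting colimit of Hom-groups with $\Hom_{h\DM(Y)}(1_{Y},1_{Y}(n)[m])$. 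The two proofs rest on the same underlying ingredients (h-descent plus continuity on affines), since Proposition~\ref{prop.pres.colim} is itself proved that way; yours is shorter and more conceptual, while the paper's hands-on version also yields the explicit colimit presentation of Remark~\ref{rem.coh} in terms of the levels $Y_{i}$. One small point to tighten: compatibility of the equivalence with the Tate twist is not purely a consequence of monoidality, since the twist is defined geometrically via the $\Gm$-bundle in (M4); it does hold, because the comparison functor restricted to $\DM(Y_{0})$ is $*$-pullback along $Y\to Y_{0}$, which preserves Tate objects, and $1_{(y,Y)}(n)$ is the image of $1_{Y_{0}}(n)$ under $\ins_{0}^{Y}$.
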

\begin{proof}
	Let $\Ytilde_{0}\rightarrow Y_{0}$ be a smooth cover of $Y_{0}$ by a scheme locally of finite type over $S$. Let $\Ytilde_{i}$ denote the base change to $Y_{i}$ and $\Ytilde$ its limit. Then $\Ytilde$ is representable by a scheme and yields a smooth cover of $Y$. As $Y$ is locally of finite type, so is $\Ytilde$. For every $i\in \Ical$, we can compute 
	$$
		g_{i*}1_{Y_{i}}\simeq \colim_{\Delta}g_{\Cv(\Ytilde_{i}/Y_{i})_{\bullet}*}1_{\Cv(\Ytilde_{i}/Y_{i})_{\bullet}}
	$$
	by descent, where $g_{\Cv(\Ytilde_{i}/Y_{i})_{\bullet}}\colon \Cv(\Ytilde_{i}/Y_{i})_{\bullet}\rightarrow S$ denotes the projection. In particular, we can write 
	$$
		f_{*}1_{(y,Y)}\simeq  \colim_{j\in \Ical}g_{j*}1_{Y_{j}}\simeq \colim_{\Delta}\colim_{j\in \Ical}g_{\Cv(\Ytilde_{i}/Y_{i})_{\bullet}*}1_{\Cv(\Ytilde_{i}/Y_{i})_{\bullet}}
	$$
	(cf. Lemma \ref{lem.coh.colim}).
	Therefore, by continuity of $\DM$ \cite[Thm. 14.3.1]{CD} and descent, we have
	\begin{align*}
		H^{n,m}(Y,\QQ) &\cong \colim_{\Delta}\colim_{i\in \Ical}\Hom_{h\DM(\Cv(\Ytilde_{i}/Y_{i})_{\bullet})}(1_{\Cv(\Ytilde_{i}/Y_{i})_{\bullet}},1_{\Cv(\Ytilde_{i}/Y_{i})_{\bullet}}(n)[m])\\ 
		&\cong \colim_{\Delta}\Hom_{h\DM(\Cv(\Ytilde/Y)_{\bullet})}(1_{\Cv(\Ytilde/Y)_{\bullet}},1_{\Cv(\Ytilde/Y)_{\bullet}}(n)[m])\\ 
		&\cong \Hom_{h\DM(Y)}(1_{Y},1_{Y}(n)[m]).
	\end{align*}
\end{proof}

\begin{example}
\label{ex.field}
	Let us consider the situation of Example \ref{ex.alg.ext}. So let $L/K$ be an algebraic extension of fields and consider a smooth scheme $Z\rightarrow \Spec(K)$. We denote by $\Fcal^{L}_{K}$ the set of finite field extensions $E/K$ contained in $L$. We denote by $(\iota,\Spec(L))$ the associated classical  pro-$\Fcal^{L}_{K}$-algebraic stack over $K$ and by $(z,Z_{L})$ the induced classical pro-$\Fcal^{L}_{K}$-algebraic stack over $K$. Using Remark \ref{rem.comp.coh}, we have
	$$
		H^{n,m}(Z_{L},\QQ) \cong \colim_{E\in \Fcal^{L}_{K}} A^{n}(Z_{E},2n-m)_{\QQ}\cong A^{n}(Z_{L},2n-m)_{\QQ},
	$$
	where the second isomorphism follows from direct computations on the level of cycles \cite[Lem. 1.4.6 (i)]{BroThes}.
\end{example}

\begin{example}
\label{ex.gen.fiber}
	Assume that $S$ is regular. Let $Z\rightarrow S$ be a smooth integral $S$-scheme with function field $K$. Let us consider the family $(U)_{U\in \Ucal_{K}}$ of all affine open subschemes of $Z$. If we denote the respective inclusions by $\iota_{UU'}\colon U'\hookrightarrow U$, then this family assembles to a classical pro-$\Ucal_{K}$-algebraic stack $(\iota,\Spec(K))$. Let $T\rightarrow Z$ be a flat morphism of $S$-schemes. Then base change of $T$ along the system $(\iota,\Spec(K))$ induces a classical pro-$\Ucal_{K}$-algebraic stack $(\iota_{T},T_{K})$. Using Remark \ref{rem.comp.coh}, we have
	$$
		H^{n,m}(T_{K},\QQ) \cong \colim_{U} A^{n}(T_{U},2n-m)_{\QQ}\cong A^{n}(T_{K},2n-m)_{\QQ},
	$$
	where the second isomorphism follows from direct computations on the level of cycles \cite[Lem. 1.4.6 (ii)]{BroThes}.
\end{example}

\subsection{Motives on $\Disp$}

\label{sec.disp}

Let us fix a prime $p>0$ and assume from now on that $S=\Spec(\FF_{p})$. We will denote the Frobenius with $\sigma$. In the following section, we want to define the pro-algebraic stack of (truncated) Displays over $S$, after the construction of Lau (cf. \cite{Lau}) and show that its underlying motive in $\DM(S)$ is Tate. We do not want to go into the detail of the construction of (truncated) Displays and its relation to the stack of Barsotti-Tate groups, but rather give an equivalent definition following \cite{Bro1}.\par
Let us consider the functor $\Spec(R)\mapsto \GL_{h}(W_{n}(R))$ for any affine $S$-scheme $\Spec(R)$, where $W_{n}$ denotes the ring of $n$-truncated Witt-vectors. This functor is represented by an open subscheme of $\AA^{nh^{2}}$, which we  denote by $X_{n}^{h}$. Let $G^{h,d}_{n}(R)$ denote the group of invertible $h\times h$ matricies 
	\begin{equation*}
		\begin{pmatrix}
			A & B   \\
			C & D 
		\end{pmatrix}
	\end{equation*}
with $A\in \GL_{h-d}(W_{n}(R))$, $B\in \End(W_{n}(R)^{d},W_{n}(R)^{h-d})$, $C\in  \End(W_{n}(R)^{h-d},I_{n+1}(R)^{d})$, where $I_{n+1}(R)$ denotes the image of the Verschiebung $W_{n+1}\rightarrow W_{n}$ and $D\in \GL_{d}(W_{n}(R))$.
Let us denote by $G^{h,d}_{n}$ the group scheme representing this functor. Then $G^{h,d}_{n}$ acts on $X^{h}_{n}$ via $M\cdot x\coloneqq Mx \sigma'(M)^{-1}$,
where 
$$
	\sigma'(M)\coloneqq \begin{pmatrix}
			\sigma A & p\sigma B   \\
			\sigma C & \sigma D
		\end{pmatrix}.
$$

\begin{defi}
	For $0\leq d\leq h$ and $n\in\NN$, we define the stack of $n$-truncated displays of dimension $d$ and height $n$ to be 
	$$
		\Disp_{n}^{h,d}\coloneqq \quot{X^{h}_{n}/G^{h,d}_{n}}.
	$$
	Further, we define the stack of $n$-truncated displays as 
	$$
		\Disp_{n}\coloneqq \coprod_{0\leq d\leq h} \Disp_{n}^{h,d}.
	$$
\end{defi}

\begin{notation}
	For each $n\in\NN$, there exists a truncation map $\tau_{n}\colon \Disp_{n+1}\rightarrow \Disp_{n}$. We denote by $(\tau,\Disp)$ the pro-algebraic stack induced by the $\tau_{n}$. 
\end{notation}

\begin{lem}[\protect{\cite[Lem. 2.2.2]{Bro1}}]
\label{lem.unip}
	Let $K^{h,d}_{n,m}$ denote the kernel of the projection $G^{h,d}_{n}\rightarrow  G^{h,d}_{m}$ for $m<n$ and $\Ktilde^{h,d}_{n}$ the kernel of the projection $G^{h,d}_{n}\rightarrow \GL_{h-d}\times \GL_{d}$. Then $K^{h,d}_{n,m}$ and $\Ktilde^{h,d}_{n}$ are split unipotent.
\end{lem}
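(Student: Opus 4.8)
The plan is to produce, on each of the two group schemes, an explicit finite filtration by closed normal subgroup schemes whose successive quotients are vector groups $\mathbb{G}_{a}^{r}$; since $\FF_{p}$ is perfect, exhibiting such a normal series \emph{is} what it means to be split unipotent, and the graded pieces will be read off from the Verschiebung filtration on truncated Witt vectors. The two facts about $W_{n}$ that drive everything are the filtration
$$
W_{n}\supseteq VW_{n-1}\supseteq V^{2}W_{n-2}\supseteq\cdots\supseteq V^{n-1}W_{1}\supseteq V^{n}W=0
$$
whose graded pieces $V^{i}W_{n-i}/V^{i+1}W_{n-i-1}$ are canonically $\cong\mathbb{G}_{a}$, together with the fact that a product $V^{a}(x)\,V^{b}(y)$ of an element of Verschiebung-depth $a$ with one of depth $b$ again lies in $V^{a+b}W$ (concretely $V^{a}(x)V^{b}(y)=V^{a+b}(F^{b}(x)F^{a}(y))$), and hence vanishes once $a+b\geq n$. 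It is this depth estimate, applied to the block-matrix multiplication on $G^{h,d}_{n}$, that turns the group law into addition on the associated graded. I would also stress at the outset that the constraint $C\in I_{n+1}$ imposed on the lower-left block is itself a Verschiebung-depth condition, and is exactly the device that keeps the off-diagonal interaction unipotent.

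For $K^{h,d}_{n,m}$ I would first factor the reduction $G^{h,d}_{n}\to G^{h,d}_{m}$ through the tower $G^{h,d}_{n}\to G^{h,d}_{n-1}\to\cdots\to G^{h,d}_{m}$ of one-step truncations induced by $W_{k+1}\to W_{k}$ (one checks these respect the block conditions, in particular the $I_{\bullet}$-condition on $C$, and are surjective). Writing $K^{h,d}_{n,k}\coloneqq\ker(G^{h,d}_{n}\to G^{h,d}_{k})$, this presents $K^{h,d}_{n,m}$ as an iterated extension with graded pieces the one-step kernels $K^{h,d}_{k+1,k}$ for $m\leq k<n$, each normal in $K^{h,d}_{n,m}$. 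An element of $K^{h,d}_{k+1,k}$ is a block matrix $I+V^{k}(M)$ with $M$ over $R=W_{1}$ subject to the block constraints; since $k\geq m\geq 1$, the depth estimate gives $V^{k}(M_{1})V^{k}(M_{2})\in V^{2k}W=0$ in $W_{k+1}$, so that
$$
(I+V^{k}(M_{1}))(I+V^{k}(M_{2}))=I+V^{k}(M_{1}+M_{2}).
$$
Hence $K^{h,d}_{k+1,k}\cong\mathbb{G}_{a}^{r_{k}}$ is a vector group, and $K^{h,d}_{n,m}$ is split unipotent.

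For $\Ktilde^{h,d}_{n}$ I would use the extension
$$
1\longrightarrow K^{h,d}_{n,1}\longrightarrow \Ktilde^{h,d}_{n}\longrightarrow \Ktilde^{h,d}_{1}\longrightarrow 1
$$
coming from the factorization $G^{h,d}_{n}\to G^{h,d}_{1}\to\GL_{h-d}\times\GL_{d}$. The kernel $K^{h,d}_{n,1}$ is split unipotent by the previous paragraph, and an extension of split unipotent groups over the perfect field $\FF_{p}$ is again split unipotent, so it remains to treat the residual group $\Ktilde^{h,d}_{1}=\ker(G^{h,d}_{1}\to\GL_{h-d}\times\GL_{d})$ over $W_{1}=R$. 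Here the augmentation ideal of $W_{1}$ vanishes, so the diagonal blocks are the identity and an element is recorded by its off-diagonal data $(B,C)$ with $B\in M_{(h-d)\times d}(R)$ and $C\in M_{d\times(h-d)}(I_{2}(R))$. Because the $I_{2}$-condition forces the products $BC$ and $CB$ into the (one-step-deeper) augmentation part, these products vanish over $W_{1}$, so $\Ktilde^{h,d}_{1}$ is the abelian vector group $\{(B,C)\}$; in general the same condition keeps any commutator inside a central vector group, giving a short central series with $\mathbb{G}_{a}$-quotients. Thus $\Ktilde^{h,d}_{1}$, and hence $\Ktilde^{h,d}_{n}$, is split unipotent.

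The main obstacle is bookkeeping rather than conceptual. One must verify that the one-step reductions $G^{h,d}_{k+1}\to G^{h,d}_{k}$ are well defined, surjective, and compatible with the Verschiebung condition on the block $C$ (so that the kernels really are the vector groups above), and that at each stage the chosen subgroups are closed and normal with the claimed additive associated graded; these are routine but somewhat delicate Witt-vector computations, for which I would cite Zink--Lau for the arithmetic of $W_{n}$ and $I_{n}$, and \cite{Bro1} for the explicit matrices. As a safety net, once $K^{h,d}_{n,m}$ and $\Ktilde^{h,d}_{n}$ are known to be smooth connected unipotent group schemes, split unipotence is automatic since $\FF_{p}$ is perfect; nonetheless I would organize the write-up around the explicit $\mathbb{G}_{a}$-filtration, since it is precisely the graded $\mathbb{G}_{a}$-pieces — not merely abstract split unipotence — that feed into the motivic computation for $\Disp$.
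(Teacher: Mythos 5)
The paper does not prove this lemma at all: it is quoted from Brokemper \cite[Lem.~2.2.2]{Bro1}, so there is no internal proof to measure your argument against. That said, your proposal is correct in outline and is essentially the standard argument (and, up to presentation, the one in \textit{loc.~cit.}): filter $K^{h,d}_{n,m}$ by the kernels $K^{h,d}_{n,k}$ of the intermediate truncations, identify each one-step graded piece with a vector group using that Verschiebung-depth adds under multiplication of Witt vectors, and handle $\Ktilde^{h,d}_{n}$ via the extension by $K^{h,d}_{n,1}$ together with the fact that over a perfect field an extension of split unipotent groups is again split unipotent. The one place where your ``bookkeeping'' caveat is doing genuine work is the $C$-block: its entries lie in $I_{n+1}(R)=VW_{n}(R)\subseteq W_{n+1}(R)$, which is a $W_{n}(R)$-module with its own multiplication and Frobenius structure rather than literally a submodule of $W_{n}(R)$, so the block entries of an element of $G^{h,d}_{n}$ live in two different rings and ``block-matrix multiplication,'' the surjectivity of the one-step truncations, and their compatibility with the $C$-condition all have to be interpreted in Lau's sense before your depth estimate applies; this is precisely the setup that \cite{Bro1} inherits from Lau. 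Your fallback observation --- that smooth connected unipotent over $\FF_{p}$ already implies split --- is a legitimate safety net, and your closing remark is well taken: it is the explicit filtration with vector-group quotients, not abstract split unipotence, that is exploited downstream (via homotopy invariance for the unipotent kernels in the proof of Proposition \ref{prop.Disp.strict}).
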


\begin{rem}
\label{rem.Bro}
	As seen in the proof of \cite[Thm. 2.3.3]{Bro1} the group $G_{1}^{h,d}$ is even a split extension of $\GL_{h-d}\times \GL_{d}$ by a split unipotent group $U_{1}$. The splitting is induced by the canonical inclusion $\GL_{h-d}\times \GL_{d}\hookrightarrow G^{h,d}_{1}$.
\end{rem}

\begin{prop}
\label{prop.Disp.strict}
	The pro-algebraic stack $(\tau,\Disp)$ is strict and tame.
\end{prop}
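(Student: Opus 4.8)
The plan is to prove the two assertions separately. Since $\Disp_{n}=\coprod_{0\le d\le h}\Disp_{n}^{h,d}$ and the truncation $\tau_{n}$ respects the decomposition indexed by $(h,d)$, while $\DM$ carries coproducts to products, it suffices to treat a single component $\Disp_{n}^{h,d}=\quot{X_{n}^{h}/G_{n}^{h,d}}$. Throughout I would work with the quotient presentation, the smooth atlas $X_{n}^{h}\to\Disp_{n}^{h,d}$, and the fact that $\tau_{n}$ is induced by the Witt-vector truncation $\psi_{n}\colon X_{n+1}^{h}\to X_{n}^{h}$ together with the homomorphism $G_{n+1}^{h,d}\to G_{n}^{h,d}$.

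For \emph{tameness}, I would first observe that a matrix over $W_{n+1}(R)$ is invertible if and only if its leading Witt component (its image in $R$) is invertible, hence if and only if it is invertible over $W_{n}(R)$. Thus $X_{n+1}^{h}$ is exactly the preimage of $X_{n}^{h}$ under the linear projection $\AA^{(n+1)h^{2}}\to\AA^{nh^{2}}$, so $\psi_{n}$ is a (trivial) $\AA^{h^{2}}$-bundle and in particular smooth. As the atlas $X_{n}^{h}\to\Disp_{n}^{h,d}$ is a torsor under the smooth group $G_{n}^{h,d}$, the composite $X_{n+1}^{h}\xrightarrow{\psi_{n}}X_{n}^{h}\to\Disp_{n}^{h,d}$ is smooth; since $X_{n+1}^{h}\to\Disp_{n+1}^{h,d}$ is a smooth surjection and smoothness is fppf-local on the source, $\tau_{n}$ is smooth. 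This is the easy half.

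For \emph{strictness} I would use that tameness provides, via the six-functor formalism on algebraic stacks (Example~\ref{ex.pf.1}, Definition~\ref{def.pf}), the left adjoint $\tau_{n\sharp}$ of $\tau_{n}^{*}$. As $\tau_{n}^{*}$ is the right adjoint in $\tau_{n\sharp}\dashv\tau_{n}^{*}$, it is fully faithful iff the counit $\tau_{n\sharp}\tau_{n}^{*}\to\id$ is an equivalence; by the projection formula (Definition~\ref{def.pf}(3)) this counit is $(-)\otimes\tau_{n\sharp}1_{\Disp_{n+1}^{h,d}}$, so it suffices to show that the relative motive satisfies $\tau_{n\sharp}1_{\Disp_{n+1}^{h,d}}\simeq 1_{\Disp_{n}^{h,d}}$. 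By smooth base change (Definition~\ref{def.pf}(2)) this may be checked after pulling back along the atlas $X_{n}^{h}\to\Disp_{n}^{h,d}$. A direct computation with the quotient presentation identifies this base change of $\tau_{n}$ with $\quot{X_{n+1}^{h}/K_{n+1,n}^{h,d}}\to X_{n}^{h}$, where $K_{n+1,n}^{h,d}=\ker(G_{n+1}^{h,d}\to G_{n}^{h,d})$ acts on $X_{n+1}^{h}$ and trivially on $X_{n}^{h}$; since $\psi_{n}$ is $K_{n+1,n}^{h,d}$-equivariant this factors as the affine bundle $\quot{X_{n+1}^{h}/K_{n+1,n}^{h,d}}\to\quot{X_{n}^{h}/K_{n+1,n}^{h,d}}=X_{n}^{h}\times B K_{n+1,n}^{h,d}$ followed by the projection onto $X_{n}^{h}$.

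It then remains to compute the $\sharp$-pushforward of the unit along these two maps. For the affine bundle, homotopy invariance (property (M1)) gives $\tau_{\sharp}1\simeq 1$. For the projection $X_{n}^{h}\times B K_{n+1,n}^{h,d}\to X_{n}^{h}$, the content is that the relative motive of the classifying stack of a split unipotent group is trivial: by Lemma~\ref{lem.unip} the group $K_{n+1,n}^{h,d}$ is split unipotent, so I would filter it with successive quotients isomorphic to $\mathbb{G}_{a}$ and induct, the base case $B\mathbb{G}_{a}$ being handled by the bar resolution $\mathbb{G}_{a}^{\times\bullet}$, whose terms are affine spaces, so that homotopy invariance collapses the simplicial motive to the unit. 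Combining the two, $\tau_{n\sharp}1\simeq 1$ after base change, hence globally, which yields strictness. The main obstacle is to make the base-change identification $\quot{X_{n+1}^{h}/K_{n+1,n}^{h,d}}\to X_{n}^{h}$ and the triviality of the relative motive of $B K_{n+1,n}^{h,d}$ \emph{homotopy-coherent}, i.e.\ to verify that the simplicial motive arising from the bar construction is genuinely constant and that $\tau_{n\sharp}1$ commutes with the atlas base change in the stacky six-functor formalism; the group-theoretic inputs themselves are routine given Lemma~\ref{lem.unip}.
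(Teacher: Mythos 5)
Your proof is correct and takes essentially the same route as the paper's: both arguments come down to Lemma~\ref{lem.unip} (split unipotence of the kernels $K^{h,d}_{n,m}$), homotopy invariance for the resulting iterated affine bundles, and the triviality of motives over the classifying stack of a split unipotent group. The only real difference is packaging — the paper factors $\tau_{n,m}$ through the intermediate quotient $\quot{X^{h}_{m}/G^{h,d}_{n}}$ and checks full faithfulness of each factor directly (citing \cite[Prop.~2.2.11]{RS1} for the unipotent gerbe), whereas you reformulate full faithfulness as the statement $\tau_{n\sharp}1\simeq 1$ via the counit and projection formula and then verify the same two geometric inputs after base change along the atlas $X^{h}_{n}\to\Disp^{h,d}_{n}$, which is exactly the atlas-pullback of the paper's factorization.
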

\begin{proof}
	As $\DM$ preserves finite products, it is enough to prove that for any $0\leq d\leq h$ and any $n>m$ the truncation $\tau^{h,d}_{n,m}\colon \Disp^{h,d}_{n}\rightarrow \Disp^{h,d}_{m}$ is strict.
	By definition $\tau_{n,m}$ factors through 
	$$
		\quot{X^{h}_{n}/G^{h,d}_{n}}\xrightarrow{a} \quot{X^{h}_{m}/G^{h,d}_{n}}\xrightarrow{b} \quot{X^{h}_{m}/G^{h,d}_{m}}.
	$$
	The map $b^{*}\colon \DM(\quot{X^{h}_{m}/G^{h,d}_{m}})\rightarrow  \DM(\quot{X^{h}_{m}/G^{h,d}_{n}})$ is an equivalence of categories by Lemma \ref{lem.unip} (cf. \cite[Prop. 2.2.11]{RS1}).\par 
	We now claim that $a^{*}$ is fully faithful (the idea is the same as in \cite[Thm 2.3.1]{Bro1}). By definition the map $X^{h}_{n}\rightarrow X^{h}_{m}$ is a $K^{h,0}_{n,m}$-torsor. By \'etale descent we may replace the morphism $a$ by the projection $K^{h,0}_{n,m}\rightarrow S$. By Lemma \ref{lem.unip} this group is split unipotent, so it admits a normal series with successive quotients given by a vector bundle. Thus, applying again \'etale descent and induction, we may assume that  $K^{h,0}_{n,m}$ is a vector bundle over $S$, showing that $a^{*}$ is fully faithful.
	
	The factorization also shows that $\tau$ is tame.
\end{proof}

\begin{lem}[\protect{\cite{LauG}}]
\label{lem.Disp.pres}
	The functors $X^{h,d}_{\infty}\coloneqq \lim_{n} X^{h,d}_{n}$ and $G^{h,d}_{\infty}\coloneqq\lim_{n}G^{h,d}_{n}$ are representable by affine $\FF_{p}$-group schemes. Moreover, $G^{h,d}_{\infty}$ acts on $X^{h,d}_{\infty}$ and we have an equivalence of algebraic stacks $\quot{X^{h,d}_{\infty}/G^{h,d}_{\infty}}\cong \Disp$. 
\end{lem}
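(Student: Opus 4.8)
The plan is to treat the three assertions---representability of the limits, the limit action, and the identification of the quotient---in turn, and to reduce the last and hardest one to a torsor-lifting argument powered by Lemma~\ref{lem.unip}. The representability statements are essentially formal consequences of affineness of the Witt schemes, whereas commuting the quotient stack with the inverse limit is the point that requires a genuine argument.

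\emph{Representability and the action.} First I would recall that the truncated Witt vectors $W_{n}$ are affine $\FF_{p}$-ring schemes and that the truncation maps $W_{n+1}\to W_{n}$ are affine. Hence each $X_{n}^{h}$---an open subscheme of $\AA^{nh^{2}}$ cut out by invertibility of the determinant, and so affine---and each $G_{n}^{h,d}$---a closed subcondition on such matrices, hence affine---is an affine $\FF_{p}$-scheme, with all truncation maps affine. Since a cofiltered limit of affine schemes along affine transition morphisms is again affine (cf.~\cite{stacks-project}), both $X^{h,d}_{\infty}=\lim_{n}X_{n}^{h}$ and $G^{h,d}_{\infty}=\lim_{n}G_{n}^{h,d}$ are affine $\FF_{p}$-schemes; the group operations pass to the limit because limits of group objects are group objects, so $G^{h,d}_{\infty}$ is an affine group scheme (concretely $\GL_{h}(W)$ and $G^{h,d}(W)$ for the full Witt ring $W=\lim_{n}W_{n}$). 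The action maps $G_{n}^{h,d}\times X_{n}^{h}\to X_{n}^{h}$, $M\cdot x=Mx\,\sigma'(M)^{-1}$, are compatible with truncation---this compatibility is precisely what organizes the $\Disp_{n}$ into an inverse system along the $\tau_{n}$---so they form a morphism of inverse systems and pass to an action $G^{h,d}_{\infty}\times X^{h,d}_{\infty}\to X^{h,d}_{\infty}$ in the limit.

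\emph{The quotient.} It suffices to identify each component, $\quot{X_{\infty}^{h,d}/G_{\infty}^{h,d}}\simeq \lim_{n}\Disp_{n}^{h,d}$, the full statement for $\Disp$ following by taking the finite coproduct over $0\le d\le h$. There is a natural comparison functor: a $G^{h,d}_{\infty}$-torsor $P$ equipped with an equivariant map $P\to X^{h,d}_{\infty}$ is sent to the tower $(P_{n},u_{n})$ with $P_{n}\coloneqq P\times^{G^{h,d}_{\infty}}G_{n}^{h,d}$ and $u_{n}$ the induced map $P_{n}\to X_{n}^{h}$, which is an object of $\lim_{n}\Disp_{n}^{h,d}$. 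I would prove this is an equivalence by realizing both sides as groupoids of torsors-with-equivariant-map and exhibiting the quasi-inverse $(P_{n},u_{n})\mapsto(\lim_{n}P_{n},\lim_{n}u_{n})$.

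The main obstacle is showing that this quasi-inverse is well defined, i.e.\ that forming the quotient stack commutes with the inverse limit: one must check that $\lim_{n}P_{n}$ is again a $G^{h,d}_{\infty}$-torsor and that pushing it forward recovers the tower. This is exactly where Lemma~\ref{lem.unip} enters. Each transition map $P_{n+1}\to P_{n}$ is a torsor under the kernel $K^{h,d}_{n+1,n}$, which is split unipotent and hence a successive extension of $\mathbb{G}_{a}$; since $H^{1}(\Spec R,\mathbb{G}_{a})=0$ for any affine $\Spec R$, every such kernel-torsor over an affine base is trivial. Thus, after an \'etale cover trivializing $P_{0}$, each stage of the tower trivializes compatibly, the transition maps are surjective on sections, the system is Mittag--Leffler (so the relevant $R^{1}\lim$ vanishes), and $\lim_{n}P_{n}$ is \'etale-locally isomorphic to $\lim_{n}G_{n}^{h,d}=G^{h,d}_{\infty}$, i.e.\ a $G^{h,d}_{\infty}$-torsor, with push-forward recovering $P_{n}$. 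The same lifting argument applied to morphisms of towers yields full faithfulness, completing the identification; alternatively one may simply invoke \cite{LauG}.
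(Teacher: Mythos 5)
There is nothing in the paper to compare your argument against: Lemma~\ref{lem.Disp.pres} is stated with a citation to \cite{LauG} and no proof is given, so the paper's ``route'' is simply to import the result from Lau. Your proposal is a reasonable self-contained reconstruction, and the mechanism you isolate for the only non-formal step --- commuting the quotient stack with the inverse limit by observing that the transition kernels $K^{h,d}_{n+1,n}$ are split unipotent, hence have trivial $H^{1}$ over affines, so the tower of torsors is Mittag--Leffler and $\lim_{n}P_{n}$ is again a torsor --- is exactly the mechanism that makes the displays example work throughout Section~\ref{sec.disp} (compare the use of Lemma~\ref{lem.unip} in Proposition~\ref{prop.Disp.strict} and the cartesian squares \textup{(\ref{eq.bc})} in Proposition~\ref{prop.Disp.motive}). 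What your approach buys is a proof that visibly only uses affineness of the Witt schemes plus split unipotence of the kernels, so it would generalize to any pro-system of quotient stacks $\quot{X_{n}/G_{n}}$ with affine levels and split unipotent transition kernels.

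A few points you should tighten. Affineness of $X^{h}_{n}$ needs the observation that a Witt vector is a unit iff its zeroth component is, so the invertibility locus is a \emph{principal} open of $\AA^{nh^{2}}$ (a general open subscheme of affine space need not be affine). The assertion that $P_{n+1}\to P_{n}$ is a $K^{h,d}_{n+1,n}$-torsor uses that $G^{h,d}_{n+1}\to G^{h,d}_{n}$ is an fppf epimorphism, which should be recorded (units and the image of Verschiebung lift along $W_{n+1}\to W_{n}$). Finally, the compatibility of the trivializations is cleanest phrased as lifting a single point $p_{n}\in P_{n}(R')$ through the tower, the fiber of $P_{n+1}\to P_{n}$ over $p_{n}$ being a trivial $K^{h,d}_{n+1,n}$-torsor over the affine $\Spec R'$; and the full-faithfulness step should be stated as the bijection $\mathrm{Isom}_{G^{h,d}_{\infty}}(P,P')\to\lim_{n}\mathrm{Isom}_{G^{h,d}_{n}}(P_{n},P'_{n})$, which again reduces étale-locally to $G^{h,d}_{\infty}(R')\simeq\lim_{n}G^{h,d}_{n}(R')$. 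None of these is a genuine gap.
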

%\begin{proof}
%	The representability follows from \cite[Lem. 5.4.1]{LauG}.\footnote{We can use Lau's results in the case of $G=\GL_{h}$, $\mu=(1,\dots,1,0\dots,0)$, where the $1$'s appear $h-d$ and $0$'s appear $d$ times and consider the Witt frame described in \cite[Ex. 2.1.3]{LauG}.} The action is described in \cite[\S 5.1]{LauG} and the representation of $\Disp$ as the quotient of this action follows from the definitions (see also \cite[Rem. 5.4.3]{LauG}). 
%\end{proof}

\begin{prop}
\label{prop.Disp.motive}
	The natural morphism $\DM(\tau,\Disp)\rightarrow \DM(\Disp)$ induces an equivalence of DG-categories.
\end{prop}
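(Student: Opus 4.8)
The plan is to compute both sides by descent along an affine atlas and to identify them as the same iterated limit. The essential difficulty is that $(\tau,\Disp)$ is \emph{not} classical: the truncation maps $\tau_{n}$ are not affine, so Proposition~\ref{prop.pres.colim} does not apply directly. What rescues the argument is Lau's presentation $\Disp\simeq\quot{X^{h,d}_{\infty}/G^{h,d}_{\infty}}$ from Lemma~\ref{lem.Disp.pres}, which furnishes an atlas that is an affine pro-scheme with affine transition maps — precisely the input needed to imitate the proof of Proposition~\ref{prop.pres.colim} even though the stack itself is not classical. Suppressing $h,d$, write $\widetilde{\Disp}_{n}=\coprod_{d}X^{h,d}_{n}$ for the (finite type affine) atlas of $\Disp_{n}$ and $\widetilde{\Disp}=\lim_{n}\widetilde{\Disp}_{n}$; by Lemma~\ref{lem.Disp.pres} the latter is affine, $\widetilde{\Disp}\to\Disp$ is an atlas, and levelwise $\Cv_{k}(\widetilde{\Disp}/\Disp)\simeq\lim_{n}\Cv_{k}(\widetilde{\Disp}_{n}/\Disp_{n})$ via the torsor identification $\Cv_{k}\simeq\widetilde{\Disp}\times (G^{h,d}_{\infty})^{k}$.

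Since the comparison functor $\DM(\tau,\Disp)\to\DM(\Disp)$ is monoidal and colimit-preserving by construction, it is an equivalence if and only if its right adjoint is; so, as in Proposition~\ref{prop.pres.colim} and via Lemma~\ref{lem.Gai.colim} applied to $F=\DM$, it suffices to show that $\DM(\Disp)\to\lim_{n,\tau_{*}}\DM(\Disp_{n})$ is an equivalence. On the source, smooth descent for each finite type stack $\Disp_{n}$ (cf.~\cite{RS1}) gives $\DM(\Disp_{n})\simeq\lim_{\Delta}\DM(\Cv_{\bullet}(\widetilde{\Disp}_{n}/\Disp_{n}))$, and — unlike in Proposition~\ref{prop.pres.colim}, where the simplicial maps came from affine base change — the equivariant structure maps $\widetilde{\Disp}_{n+1}\to\widetilde{\Disp}_{n}$ and $G^{h,d}_{n+1}\to G^{h,d}_{n}$ of Lau's presentation assemble these directly into a functor on $\NN^{\op}\times\Delta$. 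Hence
\[
\DM(\tau,\Disp)\simeq\lim_{n,\tau_{*}}\DM(\Disp_{n})\simeq\lim_{n,\tau_{*}}\lim_{\Delta}\DM(\Cv_{\bullet}(\widetilde{\Disp}_{n}/\Disp_{n})).
\]

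On the target, descent along the atlas $\widetilde{\Disp}\to\Disp$ gives $\DM(\Disp)\simeq\lim_{\Delta}\DM^{*}(\Cv_{\bullet}(\widetilde{\Disp}/\Disp))$. The key step is that each $\Cv_{k}(\widetilde{\Disp}/\Disp)=\lim_{n}\Cv_{k}(\widetilde{\Disp}_{n}/\Disp_{n})$ is a pro-object of $\AffSch^{\ft}_{S}$, so \emph{by the very construction of $\DM^{*}$} (left Kan extension from $\AffSch^{\ft}_{S}$ to $\AffSch^{\kappa}_{S}$) one has $\DM^{*}(\Cv_{k}(\widetilde{\Disp}/\Disp))\simeq\colim_{n,*}\DM(\Cv_{k}(\widetilde{\Disp}_{n}/\Disp_{n}))\simeq\lim_{n,*}\DM(\Cv_{k}(\widetilde{\Disp}_{n}/\Disp_{n}))$, the last step again by Lemma~\ref{lem.Gai.colim}. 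I stress that this does \emph{not} invoke continuity in the sense of \cite[Thm.~14.3.1]{CD}: the scheme $\widetilde{\Disp}$ is not noetherian, so that theorem is unavailable, and it is exactly the Kan-extended definition of $\DM^{*}$ on pro-affines that bypasses this obstruction. Substituting and commuting the two limits yields
\[
\DM(\Disp)\simeq\lim_{\Delta}\lim_{n,*}\DM(\Cv_{\bullet}(\widetilde{\Disp}_{n}/\Disp_{n}))\simeq\lim_{n,*}\lim_{\Delta}\DM(\Cv_{\bullet}(\widetilde{\Disp}_{n}/\Disp_{n})),
\]
which is the same double limit as for $\DM(\tau,\Disp)$.

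The part I expect to be the genuine obstacle is homotopy coherence: one must verify that the comparison functor is carried to the identity under these two chains of equivalences, i.e.\ that the descent equivalences for the $\Disp_{n}$ and for $\Disp$, the Kan-extension colimit formula, and the Gaitsgory limit/colimit identification of Lemma~\ref{lem.Gai.colim} are all compatible with the transition functors $\tau_{n}^{*}$ and with the comparison map. The mechanism, as in Proposition~\ref{prop.pres.colim}, is smooth base change — here applied to the equivariant maps of Lau's simplicial presentation rather than to affine base changes — and the tameness and strictness of $(\tau,\Disp)$ proved in Proposition~\ref{prop.Disp.strict} supply the smoothness and full faithfulness making these comparisons equivalences. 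Organizing the data as a single bifunctor $(n,[k])\mapsto\DM(\Cv_{k}(\widetilde{\Disp}_{n}/\Disp_{n}))$ on $\NN^{\op}\times\Delta$ and reading off both categories as its double limit is what lets us avoid exchanging a filtered colimit past a $\Delta$-indexed limit (which would fail in general), and gives the asserted equivalence.
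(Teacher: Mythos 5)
Your overall architecture — descend along Lau's atlas, use the Kan-extended definition of $\DM^{*}$ on the pro-affine \v{C}ech levels, and exchange the $\Delta$-limit with the $\NN$-direction — has the right shape, and your point that continuity in the sense of \cite[Thm.~14.3.1]{CD} is not what is needed is correct. But the step you yourself flag as the obstacle is where the argument breaks as written. To organize $(n,[k])\mapsto\DM(\Cv_{k}(\widetilde{\Disp}_{n}/\Disp_{n}))$ into a single bifunctor whose $\Delta$-limits recover the $\DM(\Disp_{n})$ \emph{with the transition functors $\tau_{n*}$} (equivalently, to push the filtered colimit along the $\tau_{n}^{*}$ past $\lim_{\Delta}$), you need the squares comparing consecutive \v{C}ech nerves to be adjointable. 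You invoke ``smooth base change applied to the equivariant maps of Lau's presentation'', but smooth base change requires cartesian squares, and the squares
$$
\begin{tikzcd}
X_{n+1}^{h,d}\arrow[r]\arrow[d]& X_{n}^{h,d}\arrow[d]\\
\Disp_{n+1}^{h,d}\arrow[r]&\Disp_{n}^{h,d}
\end{tikzcd}
$$
are \emph{not} cartesian: because the group changes from level to level, $X_{n}^{h,d}\times_{\Disp_{n}^{h,d}}\Disp_{n+1}^{h,d}$ is a quotient of $X_{n+1}^{h,d}$ by the kernel $K^{h,d}_{n+1,n}$ rather than $X_{n+1}^{h,d}$ itself, and likewise $\Cv_{k}(\widetilde{\Disp}_{n+1}/\Disp_{n+1})\simeq X_{n+1}^{h,d}\times(G^{h,d}_{n+1})^{k}$ is not the base change of $X_{n}^{h,d}\times(G^{h,d}_{n})^{k}$. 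This non-cartesianness is exactly why $(\tau,\Disp)$ fails to be classical and why Proposition~\ref{prop.pres.colim} does not apply; strictness and tameness from Proposition~\ref{prop.Disp.strict} do not supply the missing Beck--Chevalley equivalences.

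The paper's proof resolves this with a device absent from your argument: it replaces the levelwise presentation $\quot{X_{n}^{h,d}/G^{h,d}_{n}}$ by $\quot{X_{n}^{h,d}/G^{h,d}_{\infty}}$, quotienting every finite level by the single infinite-level group. Then (i) all comparison squares are genuinely cartesian (\cite[Cor.~A.13]{WedG}), so every \v{C}ech nerve is the base change of the one below it and the proof of Proposition~\ref{prop.pres.colim} can be repeated verbatim; (ii) the transition maps $\quot{X_{n+1}^{h,d}/G^{h,d}_{\infty}}\to\quot{X_{n}^{h,d}/G^{h,d}_{\infty}}$ are affine bundles, so this auxiliary pro-system is classical; and (iii) $\DM(\quot{X_{n}^{h,d}/G^{h,d}_{\infty}})\simeq\DM(\quot{X_{n}^{h,d}/G^{h,d}_{n}})$ by invariance under split unipotent kernels (\cite[Prop.~2.2.11]{RS1} together with Lemma~\ref{lem.unip}). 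Your exchange step can likely be repaired, since the failure of cartesianness is measured precisely by torsors under the split unipotent groups $K^{h,d}_{n+1,n}$, so homotopy invariance — not smooth base change — supplies the required adjointability; but that unipotence input is the essential missing idea, and without it (or the $G^{h,d}_{\infty}$ trick that packages it) the identification of the two double limits is unjustified.
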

\begin{proof}
	Recall that $\Disp_{n}^{h,d}\cong\quot{X_{n}^{h,d}/G^{h,d}_{n}}$. Let us consider the notation of Lemma \ref{lem.Disp.pres}. The pro-algebraic group scheme $G^{h,d}_{\infty}$ acts on each $X_{n}^{h,d}$ via restriction for each $n\in \NN$.
	 
	Note that we have for any $1\leq n\leq m\leq \infty$ a cartesian diagram
	\begin{equation}
	\label{eq.bc}
		\begin{tikzcd}
			X_{m}^{h,d}\arrow[r,""]\arrow[d,""]& X_{n}^{h,d}\arrow[d,""]\\
			\quot{X_{m}^{h,d}/G^{h,d}_{\infty}}\arrow[r,""]&\quot{X_{n}^{h,d}/G^{h,d}_{\infty}}.
		\end{tikzcd}
	\end{equation}
  \cite[Cor. A.13]{WedG}. In particular, the induced map 
	$$
	\lim_{n}X_{n}^{h,d}\cong X_{\infty}^{h,d}\rightarrow \quot{X_{\infty}^{h,d}/G^{h,d}_{\infty}}
	$$
	is an effective epimorphism of \'etale stacks and the associated \v{C}ech nerve is given by base change of the \v{C}ech nerve of $X_{1}^{h,d}\rightarrow \quot{X_{1}^{h,d}/G^{h,d}_{\infty}}$.

	Each transition map $\quot{X_{n+1}^{h,d}/G^{h,d}_{\infty}}\rightarrow \quot{X_{n}^{h,d}/G^{h,d}_{\infty}}$ is an affine bundle as explained in the proof of Proposition \ref{prop.Disp.strict}. We also note that $G^{h,d}_{\infty}$ is an affine faithfully flat group scheme.\par
	
	We have
	$$
	\begin{tikzcd}[sep=small]
		 \DM([X_{n}^{h,d}/G^{h,d}_{\infty}])\simeq \lim_{n}(\DM(X_{n}^{h,d})\arrow[r,"",shift right = 0.3 em]\arrow[r,"",shift left = 0.3 em]&\DM(X_{n}^{h,d}\times_{\FF_{p}} G^{h,d}_{\infty})\arrow[r,""]\arrow[r,"",shift right = 0.5 em]\arrow[r,"",shift left = 0.5 em]&...),
	\end{tikzcd}
	$$
	for all $1\leq n\leq \infty$ \cite[Thm. 2.2.16]{RS1}. Having the cartesian diagram (\ref{eq.bc}) in mind, we can now repeat the proof of Proposition \ref{prop.pres.colim} and obtain
	$$	
		\DM(\quot{X_{\infty}^{h,d}/G^{h,d}_{\infty}})\simeq \colim_{n} \DM(\quot{X_{n}^{h,d}/G^{h,d}_{\infty}}).
	$$
	Finally, by \cite[Prop. 2.2.11]{RS1}, we have 
	$$
	\DM(\quot{X_{n}^{h,d}/G^{h,d}_{\infty}})\simeq  \DM(\quot{X_{n}^{h,d}/G^{h,d}_{n}})
	$$
	concluding the proof.
\end{proof}

\begin{thm}
\label{thm-Disp}
	The pullback along the truncation map $\Disp\rightarrow \Disp_{1}$ induces a fully faithful embedding.
	$$
		\DM(\Disp_{1})\hookrightarrow \DM(\Disp)
	$$
	In particular, we have an equivalence of motives 
	$$
		M(\Disp)\simeq M(\Disp_{1})
	$$
	inside $\DM(\FF_{p})$ and moreover it is contained in the full stable cocomplete subcategory of $\DM(\FF_{p})$ generated by Tate motives. 
\end{thm}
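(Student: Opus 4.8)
The plan is to establish the three assertions in turn, using the continuity statement of Proposition~\ref{prop.Disp.motive} together with the strictness and tameness of $(\tau,\Disp)$ recorded in Proposition~\ref{prop.Disp.strict}.

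\textbf{Fully faithfulness and comparison of motives.} By Proposition~\ref{prop.Disp.motive} the canonical functor identifies $\DM(\tau,\Disp)$ with $\DM(\Disp)$, so it suffices to exhibit a fully faithful embedding $\DM(\Disp_1)\hookrightarrow\DM(\tau,\Disp)$ corresponding to truncation. Since $(\tau,\Disp)$ is strict and tame and the indexing poset $\NN$ has initial object given by $\Disp_1$, I would invoke Remark~\ref{rem.underlying.motive}: the projection $c_1\colon(\tau,\Disp)\to\Disp_1$ to the initial level is adjointable by Example~\ref{ex.adjointable.mor}~(2), and the explicit colimit description of Lemma~\ref{lem.Gai.colim}~(2) shows that the resulting pullback $c_1^{*}\colon\DM(\Disp_1)\to\DM(\tau,\Disp)$ is fully faithful, with right adjoint the projection $c_{1*}$. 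Under the equivalence of Proposition~\ref{prop.Disp.motive} this $c_1^{*}$ is precisely pullback along $\Disp\to\Disp_1$, which is the first claim; applying the last sentence of Remark~\ref{rem.underlying.motive} to the strict tame structure map $\Disp\to\Spec(\FF_p)$ with $M=1$ then gives $M(\Disp)=p_!p^!1\simeq p_{1!}p_1^!1=M(\Disp_1)$.

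\textbf{Reduction to a bar resolution.} For the Tate statement I would first use that $\DM$ sends the finite coproduct $\Disp_1=\coprod_{0\le d\le h}\Disp_1^{h,d}$ to a product and that the subcategory in question is closed under finite direct sums, reducing to a single $\Disp_1^{h,d}=\quot{X_1^{h}/G_1^{h,d}}$, where $X_1^{h}$ represents $\GL_h$ because $W_1=\id$. Smooth descent for $\DM$ on stacks (\cite[Thm.~2.2.16]{RS1}, as already used in Proposition~\ref{prop.Disp.motive}) identifies $\DM(\Disp_1^{h,d})$ with the limit over $\Delta$ of $\DM$ applied to the \v{C}ech nerve of the atlas $\GL_h\to\Disp_1^{h,d}$, whose $n$-th term is $\GL_h\times(G_1^{h,d})^{\times n}$. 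Passing to the motive, i.e. pull--pushing the unit down to $\Spec(\FF_p)$, converts this limit of categories into a colimit
$$
	M(\Disp_1^{h,d})\simeq\colim_{[n]\in\Delta^{\op}}M\bigl(\GL_h\times(G_1^{h,d})^{\times n}\bigr),
$$
up to a global twist of the form $(e)[2e]$ coming from the purity (M5) and orientation (M4) equivalences of Theorem~\ref{thm.6-ff}.

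\textbf{Conclusion and main obstacle.} By Remark~\ref{rem.Bro} the group $G_1^{h,d}$ is, as a variety, the product of $\GL_{h-d}\times\GL_d$ with a split unipotent $U_1\cong\AA^{N}$; hence both $\GL_h$ and $G_1^{h,d}$ have Tate motive, and a K\"unneth argument yields the same for each finite product $\GL_h\times(G_1^{h,d})^{\times n}$. Since the full stable cocomplete subcategory of $\DM(\FF_p)$ generated by Tate motives is closed under shifts, Tate twists and all small colimits, the displayed colimit lies in it, whence so do $M(\Disp_1^{h,d})$, $M(\Disp_1)$ and therefore $M(\Disp)$. The step demanding the most care is the passage from the descent limit to this colimit presentation of $M(\Disp_1^{h,d})$: one must track the Thom twists so that each term is genuinely Tate and the reindexing of the limit into a colimit is homotopy coherent. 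Once the purity and orientation statements of Theorem~\ref{thm.6-ff} are invoked this reduces to bookkeeping, but it is the point at which the argument has to be made precise.
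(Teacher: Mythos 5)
Your proof is correct, and for the first two assertions it coincides with the paper's: the paper likewise deduces full faithfulness of $\DM(\Disp_{1})\to\DM(\tau,\Disp)$ from strictness (Proposition~\ref{prop.Disp.strict}, via the mechanism of Remark~\ref{rem.underlying.motive} and Lemma~\ref{lem.Gai.colim}~(2)) and then transports it through the equivalence of Proposition~\ref{prop.Disp.motive}, obtaining $M(\Disp)\simeq M(\Disp_{1})$ exactly as you do. Where you genuinely diverge is the Tate statement: the paper does not prove it internally but outsources it, observing that $\Disp_{1}$ is isomorphic to the stack of $F$-zips and citing the computation of its motive in \cite{yay23}. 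You instead give a self-contained bar-resolution argument: decompose $\Disp_{1}=\coprod_{d}\quot{\GL_{h}/G_{1}^{h,d}}$ (using $W_{1}=\id$, so $X_{1}^{h}\cong\GL_{h}$), present $M(\Disp_{1}^{h,d})$ as $\colim_{\Delta^{\op}}M(\GL_{h}\times(G_{1}^{h,d})^{\times n})$, and conclude from Remark~\ref{rem.Bro} that each term is Tate since $G_{1}^{h,d}$ is, as a variety, $\AA^{N}\times\GL_{h-d}\times\GL_{d}$. This is sound, and it is essentially the argument underlying the cited reference; its advantage is that the reader sees why the motive is Tate, its cost is precisely the step you flag, namely that the colimit presentation of $M(\Disp_{1}^{h,d})$ is not literally among (M1)--(M5) of Theorem~\ref{thm.6-ff} and must be extracted from smooth/h-descent for $\DM$ on quotient stacks (the statement that the motive functor, being assembled from left adjoints $p_{\sharp}p^{*}$, preserves the geometric realization of the \v{C}ech nerve, as in \cite{RS1}); the Thom twists you worry about are harmless for Tate-ness since $1(e)[2e]$-twists preserve the subcategory. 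So the proposal is a valid, slightly longer but more transparent, alternative to the paper's citation.
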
 
\begin{proof}
	By  Proposition \ref{prop.Disp.strict} the natural map $\DM(\Disp_{1})\rightarrow \DM(\tau,\Disp)$ is fully faithful. By Proposition \ref{prop.Disp.motive} the natural map $\DM(\tau,\Disp)\rightarrow \DM(\Disp)$ is an equivalence. This proves that $\DM(\Disp_{1})\rightarrow \DM(\Disp)$ is fully faithful and $M(\Disp)\simeq M(\Disp_{1})$. The fact, that this motive is Tate follows\footnote{Note that $\Disp_{1}$ is isomorphic to the stack of $F$-zips, considered in \cite{yay23}.} from \cite{yay23}. 
\end{proof}

\begin{rem}
	Brokemper computes the Chow groups of $\Disp_{1}^{h,d}$ explicitly in \cite[Thm. 2.3.3]{Bro1}. Thus, Theorem \ref{thm-Disp} and Remark \ref{rem.underlying.motive} tell us that the motivic cohomology of $\Disp$ can be computed by the Chow groups of $\Disp_{1}^{h,d}$. To be more precise, we can compute
	$$
		\bigoplus_{n\in \ZZ}H^{n,2n}(\Disp,\QQ)\cong \bigoplus_{0\leq d\leq h}\QQ[t_{1},\dots,t_{h}]^{S_{h}\times S_{h-d}}/(c_{1},\dots,c_{h}),
	$$
	where $c_{i}$ denotes the $i$-th elementary symmetric polynomial in variables $t_{1},\dots,t_{h}$.
\end{rem}

\begin{rem}
\label{rem.BT}
	Let $\BT$ be the stack of Barsotti-Tate groups (short, BT-groups) over $S$. For any $n\in \NN$, we denote by $\BT_{n}$ the stack of level-$n$ BT-groups. For any $n\geq 0$ there exists a truncation map $\tau_{n}\colon \BT_{n+1}\rightarrow \BT_{n}$ and $\lim_{n,\tau} \BT_{n} \simeq \BT$. The maps $\tau_{n}$ are smooth (cf. \cite[Thm. 4.4]{Ill}). Note that every level-$n$ BT-group admits the notion of a height and dimension, which are locally constant functions over $S$. In particular, we can write $\BT_{n} \simeq \coprod_{0\leq d\leq h} \BT^{h,d}_{n}$, where $\BT^{h,d}_{n}$ denotes the substack of $\BT$ generated by BT-groups of dimension $d$ and height $h$. \par 
We want to remark, that there is a morphism $\phi\colon \BT\rightarrow \Disp$ compatible with truncations (cf. \cite{Lau}). Moreover, each of the maps $\phi_{n}$ is smooth and an equivalence on geometric points (cf. \cite[Thm. A]{Lau}). In particular, $\phi_{n}$ is a universal homeomorphism. Note however, that this is not enough to see that $\DM(\tau,\Disp)\simeq\DM(\tau,\BT)$ as the morphisms $\phi_{n}$ are not representable. We, conjecture that $\phi^{*}\colon \DM(\Disp)\rightarrow \DM(\BT)$ restricts to Tate-motives proving that the underlying motive of $\BT$ and $\Disp$ are equivalent. However, as pro-algebraic stacks the motivic cohomology groups of $\BT$ and $\Disp$ are isomorphic.
\end{rem}

\begin{cor}
\label{cor.BT}
	We have an isomorphism of motivic cohomology groups 
	$$
		H^{*,*}(\BT,\QQ)\cong H^{*,*}(\Disp,\QQ)\cong H^{*,*}(\Disp_{1},\QQ).
	$$
\end{cor}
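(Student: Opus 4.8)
The statement packages two comparisons, and the plan is to establish each in turn. The second isomorphism $H^{*,*}(\Disp,\QQ)\cong H^{*,*}(\Disp_{1},\QQ)$ is a direct consequence of Theorem \ref{thm-Disp}: the pullback along the truncation $\Disp\to\Disp_{1}$ is a symmetric monoidal fully faithful functor $\DM(\Disp_{1})\hookrightarrow\DM(\Disp)$, so it carries the unit to the unit and each Tate twist $1_{\Disp_{1}}(n)[m]$ to $1_{\Disp}(n)[m]$, and full faithfulness upgrades this to isomorphisms
$$
\Hom_{\DM(\Disp_{1})}(1_{\Disp_{1}},1_{\Disp_{1}}(n)[m])\xrightarrow{\ \sim\ }\Hom_{\DM(\Disp)}(1_{\Disp},1_{\Disp}(n)[m]).
$$
Passing to $\pi_{0}$ gives the claim.

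For the first isomorphism $H^{*,*}(\BT,\QQ)\cong H^{*,*}(\Disp,\QQ)$ I would reduce to a levelwise statement and then take a colimit. First one records that $(\tau,\BT)$ is a tame pro-$\NN$-algebraic stack over $S=\Spec(\FF_{p})$: the truncations $\tau_{n}\colon\BT_{n+1}\to\BT_{n}$ and the structure maps $\BT_{n}\to S$ are smooth by \cite[Thm.~4.4]{Ill}, and $S$ is regular. Hence Remark \ref{rem.coh}, which rests only on Lemma \ref{lem.coh.colim} and in particular requires neither strictness nor cartesianness of the structure map, applies to both $\BT$ and $\Disp$ and yields
$$
H^{n,m}(\BT,\QQ)\cong\colim_{n}A^{n}(\BT_{n},2n-m)_{\QQ},\qquad H^{n,m}(\Disp,\QQ)\cong\colim_{n}A^{n}(\Disp_{n},2n-m)_{\QQ}.
$$
The morphism $\phi\colon\BT\to\Disp$ of Remark \ref{rem.BT} induces, through $\phi_{n}^{*}$, a map of these colimit systems, so it suffices to show that each $\phi_{n}^{*}\colon A^{n}(\Disp_{n},2n-m)_{\QQ}\to A^{n}(\BT_{n},2n-m)_{\QQ}$ is an isomorphism.

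To compare the two levelwise I would use a common atlas. Choose a smooth atlas $a\colon U_{n}\to\BT_{n}$ by a scheme; since $\phi_{n}$ is smooth and surjective, the composite $U_{n}\xrightarrow{a}\BT_{n}\xrightarrow{\phi_{n}}\Disp_{n}$ is again a smooth atlas, so $U_{n}$ presents both stacks. As $\DM$ satisfies $h$-descent \cite[Thm.~2.2.16]{RS1}, the groups $A^{n}(\BT_{n},2n-m)_{\QQ}$ and $A^{n}(\Disp_{n},2n-m)_{\QQ}$ are computed as limits over $\Delta$ of the motivic cohomology of the \v{C}ech nerves $\Cv(U_{n}/\BT_{n})_{\bullet}$ and $\Cv(U_{n}/\Disp_{n})_{\bullet}$, and $\phi_{n}^{*}$ is induced by the natural map of simplicial algebraic spaces $\Cv(U_{n}/\BT_{n})_{\bullet}\to\Cv(U_{n}/\Disp_{n})_{\bullet}$. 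In each simplicial degree this map is a base change of the relative diagonal of $\phi_{n}$, which is a universal homeomorphism since $\phi_{n}$ is; hence every term map is a universal homeomorphism of algebraic spaces. Invariance of rational motivic cohomology under universal homeomorphisms, which for algebraic spaces reduces to the scheme case via the representable diagonal and is a consequence of $h$-descent \cite{CD}, then gives a termwise isomorphism, and passing to the limit over $\Delta$ shows $\phi_{n}^{*}$ is an isomorphism. Taking the colimit over $n$ yields $H^{*,*}(\BT,\QQ)\cong H^{*,*}(\Disp,\QQ)$.

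The main obstacle is precisely the non-representability of $\phi_{n}$ flagged in Remark \ref{rem.BT}, which is what blocks the stronger conclusion $\DM(\tau,\Disp)\simeq\DM(\tau,\BT)$. The virtue of the argument above is that motivic cohomology only detects $\Hom(1,1(n)[m])$, and the non-representable directions of $\phi_{n}$ are absorbed into its relative diagonal, which the universal-homeomorphism property controls. The step demanding the most care is therefore checking that the \v{C}ech-term maps genuinely are universal homeomorphisms of algebraic spaces, i.e.\ that $\phi_{n}$ being an equivalence on geometric points forces its relative diagonal, and hence all iterated $\mathrm{Isom}$-spaces, to be universal homeomorphisms; once this is secured, the remainder is descent together with the standard scheme-level invariance.
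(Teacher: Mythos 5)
Your handling of the second isomorphism $H^{*,*}(\Disp,\QQ)\cong H^{*,*}(\Disp_{1},\QQ)$ is correct and is exactly what the paper does: it is an immediate consequence of the fully faithful monoidal embedding of Theorem \ref{thm-Disp}. The reduction of the first isomorphism to a levelwise statement via the colimit formula of Remark \ref{rem.coh} is also consistent with the paper, which appeals to ``the definition of the motivic cohomology for the pro-algebraic stack $(\tau,\BT)$''. Where you diverge is the levelwise comparison $H^{*,*}(\Disp_{n},\QQ)\cong H^{*,*}(\BT_{n},\QQ)$: the paper does not prove this, it cites \cite[Thm. 2.5.4]{Bro1}. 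You instead attempt a direct \v{C}ech-nerve argument, and that argument has a genuine gap.

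The pivotal claim --- that each \v{C}ech term map is a base change of the relative diagonal of $\phi_{n}$, ``which is a universal homeomorphism since $\phi_{n}$ is'' --- is false as a formal implication for non-representable morphisms of stacks. The morphism $p\colon B\Gm\to\Spec(k)$ is a smooth, surjective universal homeomorphism, yet the base change of its relative diagonal along the atlas $\Spec(k)\to B\Gm\times_{k}B\Gm$ is the projection $\Gm\to\Spec(k)$, which is not a universal homeomorphism; correspondingly $H^{*,*}(B\Gm,\QQ)=\QQ[c_{1}]\neq\QQ$, so your descent comparison would fail for this $p$. The property that actually does the work is not ``universal homeomorphism'' but Lau's statement that $\phi_{n}$ is an \emph{equivalence on geometric points}: full faithfulness on $\bar{k}$-groupoids is what makes the induced maps on $\mathrm{Isom}$-spaces bijective on geometric points. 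You pivot to this hypothesis only in your final paragraph, and even then the conclusion you need is stronger than universal bijectivity: to invoke invariance of $\DM_{\QQ}$ you need the term maps to be finite (or at least universally closed) radicial surjective, i.e.\ you need the relative inertia kernel $\ker(I_{\BT_{n}}\to\phi_{n}^{*}I_{\Disp_{n}})$ to be a \emph{finite} infinitesimal group scheme with surjective $\mathrm{Isom}$-lifting --- a universally bijective quasi-finite map such as $(\AA^{1}\setminus\{0\})\sqcup\{0\}\to\AA^{1}$ is not a universal homeomorphism. That finiteness is a genuine geometric input about $\phi_{n}$, and it is precisely the content packaged in the cited theorem of Brokemper. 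So you have located the difficulty correctly, but the step you flag as ``demanding the most care'' is the whole theorem, and the justification offered for it rests on the wrong hypothesis; either establish the finiteness of the relative inertia of $\phi_{n}$ from Lau's results, or cite \cite[Thm. 2.5.4]{Bro1} as the paper does.
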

\begin{proof}
	This follows form the definition of the motivic cohomology for the pro-algebraic stack $(\tau,\BT)$, Theorem \ref{thm-Disp} and \cite[Thm. 2.5.4]{Bro1}.
\end{proof}

\subsection{Action of the absolute Galois group on the motivic homology spectrum}

Let us come back to the setting of Example \ref{ex.alg.ext} and fix the setting for this subsection. So as in the example, let $L/K$ be a Galois extension of fields. Let $X$ be a smooth $K$-scheme. We denote by $\Fcal^{L}_{K}$ the filtered category of all finite extensions of $K$ contained in $L$. The inclusion along finite extensions $E\hookrightarrow F$ of $K$ yield pro-$\Fcal^{L}_{K}$-algebraic stacks $(x,X_{L})$ and $(q,L)$. The base change of the structure map $f\colon X\rightarrow \Spec(K)$ to $E\in\Fcal^{L}_{K}$ yields a map of diagrams 
$$
	f_{\bullet}\colon (x,X_{L})\rightarrow (q,L)
$$\par
 Let $\Gcal^{L}_{K}$ denote the filtered subcategory of all finite Galois extensions of $K$. Then $\Gcal^{L}_{K}\subseteq \Fcal^{L}_{K}$ is cofinal. In particular, we see that 
$$
 	\DM(x,X_{L})\simeq \colim_{E\in \Gcal^{L}_{K},x^{*}} \DM(x,X_{E}).
$$
Thus, to understand the action of $\Gal(L/K)$ on $f_{L*}1_{(x,X_{L})}$, we may restrict $(x,X_{L})$ and $(q,L)$ to $\Gcal^{L}_{K}$ and work with their underlying pro-$\Gcal^{L}_{K}$-algebraic stacks. By abuse of notation, we will keep the notation of $(x,X_{L})$ and $(q,L)$.\par 
We naturally get a map $g\colon(q,\Spec(L))\rightarrow \Spec(K)$, where we view $\Spec(K)$ as a constant diagram. We denote the composition $g\circ f_{\bullet}$ by $h_{\bullet}$.

\begin{rem}
	Note that Proposition \ref{prop.pres.colim} shows that 
	$$
		\DM(x,X_{L})\simeq \DM(X_{L})\textup{ and } \DM(q,L)\simeq \DM(L)
	$$

	We will see in the next lemma that our formalism enables us to compute the motive $f_{L*}1_{(x,X_{L})}$ by "pulling back" $f_{*}1_{X}$ along the natural map $\DM(K)\rightarrow \DM(q,L)$.
\end{rem}

Assume that $L/K$ is a \textit{finite} extension. Then the smooth base change immediately yields 
$$
	f_{L *}1_{(x,X_{L})}\simeq  f_{L*}x_{L/K}^{*}1_{X}\simeq q_{L/K}^{*}f_{*}1_{X}.
$$
If $L/K$ is not finite this does not hold as there is no base change formalism that shows this result. In our formalism we have an analog of a smooth base change in this case as in Remark \ref{rem.inf.bc}.

Next, we claim that $\Gal(L/K)$ acts on $h_{*}1_{(x,X_{L})}$. To see this, it is enough to construct actions of $\Gal(E/K)$ on $h_{*}1_{(x,X_{L})}$ for any finite Galois extension $E/K$ that is compatible with the presentation of $\Gal(L/K)$ as a limit of such.

\begin{rem}
\label{rem.Gal.action}
	Let $E/K$ be a finite Galois extension and $\varphi\in \Aut_{K}(E)$. And $M\in \DM(q,\Spec(L))$. Then $\varphi$ induces an automorphism
	$$
		g_{*}M\rightarrow g_{*}M
	$$
	via the following.\par 
	Let $\Gcal_{E}^{L}\subseteq \Gcal_{K}^{L}$ denote the subcategory of finite Galois extensions over $E$. Then we can restrict $q$ along this (filtered) subcategory and we denote the induced pro-$\Gcal_{E}^{L}$-algebraic stack by $(q_{|E},\Spec(L))$. The $K$-automorphism $\varphi$ induces via base change a cartesian automorphism $\varphi\colon (q_{|E},\Spec(L))\rightarrow (q_{|E},\Spec(L))$. The $*$-pushforward along $\varphi$ yields a functor 
	$$
		\varphi_{*}\colon \DM(q,\Spec(L))\rightarrow  \DM(q,\Spec(L)).
	$$
By construction $g_{*}\varphi_{*}\simeq g_{*}$, yielding the desired endomorphism above.\par This endomorphism by construction induces an action of $\Gal(L/K)$ on $g_{*}M$, i.e. a map $B\Gal(L/K)\rightarrow \DM(K)$.
\end{rem}

\begin{rem}
\label{rem.invariants}
	Combining the above, we have
	$$
		\colim _{E\in\Gcal_{K}^{L}}h_{*}1_{X}^{\Gal(E/K)}\simeq\colim _{E\in\Gcal_{K}^{L}}\colim _{F\in\Gcal_{E}^{L}}q_{F/K*}f_{F*}1_{X_{F}}^{\Gal(E/K)} \simeq f_{*}1_{X},
	$$
where we use \cite[Lem. 2.1.166]{ayoub} in the first equivalence. In particular, for a $X$ smooth $K$-scheme, we have
	$$
		A^{n}(X,m)_{\QQ}\simeq A^{n}(X_{L},m)^{\Gal(L/K)}_{\QQ}
	$$
	by Example \ref{ex.field}. This recovers the computation on the level of cycles \cite[Lem. 1.3.6]{Bro1}.\par 
\par 
	There exists a notion of continuous homotopy fixed points on so-called discrete $G$-spectra \cite{BeDa}. The object 
	$\colim _{E\in\Gcal_{K}^{L}}h_{*}1_{X}^{\Gal(E/K)}$ can be seen as an analog of continuous homotopy fixed points under the action of the absolute Galois group. By construction the action of $\Gal(L/K)$ is stabilized on an open and closed normal subgroup after passage to a large enough field extension.
\end{rem}

%------------------------------------------------------------------

%------------------------------------------------------------------

%==================================================================
\addcontentsline{toc}{section}{References}
\bibliographystyle{alphaurl}
\bibliography{MotOnPro}

\begin{thebibliography}{{Sta}25}

\bibitem[Ayo07]{ayoub}
Joseph Ayoub.
\newblock Les six op\'{e}rations de {G}rothendieck et le formalisme des cycles
  \'{e}vanescents dans le monde motivique. {(I, II)}.
\newblock {\em Ast\'{e}risque}, (314, 315), 2007.

\bibitem[BD10]{BeDa}
Mark Behrens and Daniel~G. Davis.
\newblock The homotopy fixed point spectra of profinite {G}alois extensions.
\newblock {\em Trans. Amer. Math. Soc.}, 362(9):4983--5042, 2010.
\newblock \href {https://doi.org/10.1090/S0002-9947-10-05154-8}
  {\path{doi:10.1090/S0002-9947-10-05154-8}}.

\bibitem[BKV22]{Kaz}
Alexis Bouthier, David Kazhdan, and Yakov Varshavsky.
\newblock Perverse sheaves on infinite-dimensional stacks, and affine
  {S}pringer theory.
\newblock {\em Adv. Math.}, 408:Paper No. 108572, 132, 2022.
\newblock \href {https://doi.org/10.1016/j.aim.2022.108572}
  {\path{doi:10.1016/j.aim.2022.108572}}.

\bibitem[Bro16]{BroThes}
Dennis Brokemper.
\newblock {\em On the Chow ring of the stack of truncated Barsotti-Tate groups
  and of the Classifying Space of some Chevalley Groups}.
\newblock PhD thesis, Universit\"at Paderborn, 2016.
\newblock URL:
  \url{https://digital.ub.uni-paderborn.de/hsx/content/titleinfo/2223086}.

\bibitem[Bro18]{Bro1}
Dennis Brokemper.
\newblock On the {C}how ring of the stack of truncated {B}arsotti-{T}ate
  groups.
\newblock {\em Pacific J. Math.}, 296(2):271--303, 2018.
\newblock \href {https://doi.org/10.2140/pjm.2018.296.271}
  {\path{doi:10.2140/pjm.2018.296.271}}.

\bibitem[CD19]{CD}
Denis-Charles Cisinski and Fr\'{e}d\'{e}ric D\'{e}glise.
\newblock {\em Triangulated categories of mixed motives}.
\newblock Springer Monographs in Mathematics. Springer, Cham, [2019] \copyright
  2019.
\newblock \href {https://doi.org/10.1007/978-3-030-33242-6}
  {\path{doi:10.1007/978-3-030-33242-6}}.

\bibitem[Cho25]{chowd}
Chirantan Chowdhury.
\newblock Six-functor formalisms {II} : The $\infty$-categorical
  compactification, 2025.
\newblock URL: \url{https://arxiv.org/abs/2412.03231}, \href
  {http://arxiv.org/abs/2412.03231} {\path{arXiv:2412.03231}}.

\bibitem[DG22]{DG}
Brad Drew and Martin Gallauer.
\newblock The universal six-functor formalism.
\newblock {\em Ann. K-Theory}, 7(4):599--649, 2022.
\newblock \href {https://doi.org/10.2140/akt.2022.7.599}
  {\path{doi:10.2140/akt.2022.7.599}}.

\bibitem[Gai12]{Gai}
Dennis Gaitsgory.
\newblock Notes on geoemtric langlands: Generalities on dg-categories.
\newblock \url{https://people.mpim-bonn.mpg.de/gaitsgde/GL/textDG.pdf}, 2012.

\bibitem[GR17]{GR}
Dennis Gaitsgory and Nick Rozenblyum.
\newblock {\em A study in derived algebraic geometry. {V}ol. {I}.
  {C}orrespondences and duality}, volume 221 of {\em Mathematical Surveys and
  Monographs}.
\newblock American Mathematical Society, Providence, RI, 2017.
\newblock \href {https://doi.org/10.1090/surv/221.1}
  {\path{doi:10.1090/surv/221.1}}.

\bibitem[Ill85]{Ill}
Luc Illusie.
\newblock D\'{e}formations de groupes de {B}arsotti-{T}ate (d'apr\`es {A}.
  {G}rothendieck).
\newblock Number 127, pages 151--198. 1985.
\newblock Seminar on arithmetic bundles: the Mordell conjecture (Paris,
  1983/84).

\bibitem[Kha19]{KhanVFC}
Adeel~A. Khan.
\newblock Virtual fundamental classes of derived stacks {I}, 2019.
\newblock arXiv:1909.01332.
\newblock \href {http://arxiv.org/abs/1909.01332} {\path{arXiv:1909.01332}}.

\bibitem[KS06]{KaSch}
Masaki Kashiwara and Pierre Schapira.
\newblock {\em Categories and sheaves}, volume 332 of {\em Grundlehren der
  mathematischen Wissenschaften [Fundamental Principles of Mathematical
  Sciences]}.
\newblock Springer-Verlag, Berlin, 2006.
\newblock \href {https://doi.org/10.1007/3-540-27950-4}
  {\path{doi:10.1007/3-540-27950-4}}.

\bibitem[Lau13]{Lau}
Eike Lau.
\newblock Smoothness of the truncated display functor.
\newblock {\em J. Amer. Math. Soc.}, 26(1):129--165, 2013.
\newblock \href {https://doi.org/10.1090/S0894-0347-2012-00744-9}
  {\path{doi:10.1090/S0894-0347-2012-00744-9}}.

\bibitem[Lau21]{LauG}
Eike Lau.
\newblock Higher frames and {$G$}-displays.
\newblock {\em Algebra Number Theory}, 15(9):2315--2355, 2021.
\newblock \href {https://doi.org/10.2140/ant.2021.15.2315}
  {\path{doi:10.2140/ant.2021.15.2315}}.

\bibitem[Lur09]{HTT}
Jacob Lurie.
\newblock {\em Higher topos theory}, volume 170 of {\em Annals of Mathematics
  Studies}.
\newblock Princeton University Press, Princeton, NJ, 2009.
\newblock \href {https://doi.org/10.1515/9781400830558}
  {\path{doi:10.1515/9781400830558}}.

\bibitem[Lur17]{HA}
Jacob Lurie.
\newblock Higher algebra.
\newblock \url{https://www.math.ias.edu/~lurie/papers/HA.pdf}, 2017.

\bibitem[Lur21]{kerodon}
Jacob Lurie.
\newblock Kerodon.
\newblock \url{https://kerodon.net}, 2021.

\bibitem[LZ17]{LiuZheng}
Yifeng Liu and Weizhe Zheng.
\newblock Enhanced six operations and base change theorem for higher artin
  stacks, 2017.
\newblock \href {http://arxiv.org/abs/1211.5948} {\path{arXiv:1211.5948}}.

\bibitem[Man22]{Mann}
Lucas Mann.
\newblock A $p$-adic 6-functor formalism in rigid-analytic geometry, 2022.
\newblock \href {http://arxiv.org/abs/2206.02022} {\path{arXiv:2206.02022}}.

\bibitem[RS20]{RS1}
Timo Richarz and Jakob Scholbach.
\newblock The intersection motive of the moduli stack of shtukas.
\newblock {\em Forum Math. Sigma}, 8:Paper No. e8, 99, 2020.
\newblock \href {https://doi.org/10.1017/fms.2019.32}
  {\path{doi:10.1017/fms.2019.32}}.

\bibitem[{Sta}25]{stacks-project}
The {Stacks Project Authors}.
\newblock \textit{Stacks Project}.
\newblock \url{https://stacks.math.columbia.edu}, 2025.

\bibitem[Wed23]{WedG}
Torsten Wedhorn.
\newblock Extension and lifting of g-bundles on stacks, 2023.
\newblock URL: \url{https://arxiv.org/abs/2311.05151}, \href
  {http://arxiv.org/abs/2311.05151} {\path{arXiv:2311.05151}}.

\bibitem[Yay23]{yay23}
Can Yaylali.
\newblock Motivic homotopy theory of the classifying stack of finite groups of
  lie type, 2023.
\newblock \href {http://arxiv.org/abs/2306.09808} {\path{arXiv:2306.09808}}.

\end{thebibliography}

\end{document}